\newtheorem{teor}{Theorem}[section]
\newtheorem{prop}[teor]{Proposition}
\newtheorem{cor}[teor]{Corollary}
\newtheorem{lema}[teor]{Lemma}
\theoremstyle{definition}
\newtheorem{defin}[teor]{Definition}
\newtheorem{example}[teor]{Example}
\theoremstyle{remark}
\newtheorem{remar}[teor]{Remark}
\newcommand{\fin}{\hspace*{\fill} $\Box$}
\newcommand{\N}{\mathbb{N}}
\newcommand{\To}{\longrightarrow}
\newcommand{\U}{\mathscr{U}}
\def\Ext{\operatorname{Ext}}
\def\PO{\operatorname{PO}}
\def\PB{\operatorname{PB}}
\def\fin{\operatorname{fin}}
\newcommand{\aproof}{\begin{proof}}
\newcommand{\zproof}{\end{proof}}
\def\KiU{(K_i)^{\mathscr U}}
\def\XiU{[X_i]_{\mathscr U}}
\def\Xii{(X_i)_{i\in I}}
\def\Kii{(K_i)_{i\in I}}
\def\dens{\operatorname{dens}}
\def\e{\varepsilon}
\def\N{\mathbb{N}}
\def\I{\mathbb{I}}
\def\vp{\varphi}
\def\PB{\operatorname{PB}}
\def\ran{\operatorname{ran}}
\begin{document}

\title{$\aleph$-injective Banach spaces and $\aleph$-projective compacta}

\author[Avil\'es et al.]
{Antonio Avil\'es, F\'elix Cabello S\'anchez,\\ Jes\'us M.
F. Castillo, Manuel Gonz\'alez and Yolanda Moreno}

\address{Departamento de Matem\'aticas, Universidad de Murcia, 30100 Espinardo,
Murcia, Spain} \email{avileslo@um.es}

\address{Departamento de Matem\'aticas, Universidad de Extremadura, Avenida de Elvas s/n,
06011 Badajoz, Spain}      \email{fcabello@unex.es}

\address{Departamento de Matem\'aticas, Universidad de Extremadura, Avenida de Elvas s/n,
06011 Badajoz, Spain}       \email{castillo@unex.es}

\address{Departamento de Matem\'aticas, Universidad de Cantabria, Avenida los Castros s/n,
39071 Santander, Spain}     \email{manuel.gonzalez@unican.es}

\address{Escuela Polit\'ecnica, Universidad de Extremadura, Avenida de la Universidad s/n,
10071 C\'aceres, Spain}        \email{ymoreno@unex.es}

\begin{abstract}
A Banach space $E$ is said to be injective if for every Banach space $X$ and every subspace $Y$ of $X$
every operator $t:Y\to E$ has an extension $T:X\to E$.
We say that $E$ is $\aleph$-injective (respectively, universally $\aleph$-injective) if the preceding
condition holds for Banach spaces $X$ (respectively $Y$) with density less than a given uncountable
cardinal $\aleph$.
We perform a study of $\aleph$-injective and universally $\aleph$-injective Banach spaces which extends
the basic case where $\aleph=\aleph_1$ is the first uncountable cardinal.
When dealing with the corresponding  ``isometric'' properties we arrive to our main examples:
ultraproducts and spaces of type $C(K)$.
We prove that ultraproducts built on countably incomplete $\aleph$-good ultrafilters are
$(1,\aleph)$-injective as long as they are Lindenstrauss spaces. We characterize $(1,\aleph)$-injective
$C(K)$ spaces as those in which the compact $K$ is an $F_\aleph$-space (disjoint open subsets which
are the union of less than $\aleph$ many closed sets have disjoint closures) and we uncover some
projectiveness properties of $F_\aleph$-spaces.
\end{abstract}

\thanks{The research of the last four authors has been supported in part by project MTM2010-20190.
That of the second, third and fifth authors by the program Junta de Extremadura GR10113 IV Plan
Regional I+D+i, Ayudas a Grupos de Investigaci\'on.}

\thanks{AMS (2010) Class. Number. 46B03, 54B30, 46B08, 54C15, 46B26 }



\maketitle

\section*{Introduction}
A Banach space $E$ is said to be injective if for every Banach space $X$ and every subspace $Y$
of $X$, each operator $t \colon Y \to E$ admits an extension $T\colon X \to E$.
In this paper we consider two weak forms of injectivity that arise by limiting the size of either
the subspace $Y$ or the containing space $X$ in the preceding definition.
Let us label them right now. Recall that the density character $\dens(X)$  of a topological
space $X$ is the smallest cardinal a dense subset of $X$ can have.

\begin{defin}\label{def}
Let $E$ be a Banach space, $\aleph$ an uncountable cardinal and $\lambda\geq 1$ a real number.
We say that $E$ is \emph{$\aleph$-injective} if for every Banach space $X$ with
$\dens(X)<\aleph$ and each subspace $Y\subset X$, every operator $t:Y\to E$ can be extended to an
operator $T:X\to E$.

We say that $E$ is \emph{$(\lambda,\aleph)$-injective} if we can always find some extension $T$
such that $\|T\|\leq\lambda\|t\|$.

Replacing the condition $\dens(X)<\aleph$ by $\dens(Y)< \aleph$ one obtains the definitions of
\emph{universally $\aleph$-injective} and \emph{universally $(\lambda,\aleph)$-injective} spaces.
\end{defin}

The choice $\aleph=\aleph_0$ (the first infinite cardinal) would not be too interesting to us:
all Banach spaces would be ``universally $\aleph_0$-injective''.
Asking for an uniform bound on the norm of the extension, that is, considering ``(universally)
$(\lambda,\aleph_0)$-injective'' spaces one arrives to the class of $\mathscr L_\infty$-spaces
which has been widely studied in Banach space theory in connection with the extension of
compact operators; see \cite[Section~4]{zbook}.
Moving to the first uncountable cardinal $\aleph_1$ one obtains the classes of separably injective
and universally separably injective Banach spaces, which also attracted attention.
(Admittedly, that the resulting name for separably injective spaces turns out to be
``$\aleph_1$-injective'' is perhaps surprising.
Nevertheless, we have followed the uses of set theory were properties labeled by a cardinal
$\aleph$ always indicate that something happens for sets whose cardinality is strictly less
than $\aleph$.)
It is worth noticing that Zippin proved in the late seventies that every infinite dimensional
separable and separably injective Banach space has to be isomorphic to $c_0$, the space of all
null sequences with the sup norm, and so even in the case $\aleph=\aleph_1$ one is mainly concerned
with nonseparable Banach spaces.
We refer the reader to \cite{zbook, accgm1} for an account and further references.
\medskip

Let us describe the plan of the paper and highlight its main results.
Section~\ref{pre} is preliminary; it contains some definitions together with the minimal background
on exact sequences of Banach spaces one needs to read the paper.
In Section~\ref{aleph-inj} we extend a variety of results in \cite{accgm1} about (universal)
separably injective Banach spaces to higher cardinals.
However, we found no reasonable generalization for a considerable portion of the results proved
in \cite{accgm1} for $\aleph_1$ and so the resulting picture is rather incomplete.
In contrast to Section~\ref{aleph-inj}, which deals mainly with ``isomorphic''  properties,
the ensuing Section~\ref{1aleph-inj} is of ``isometric nature'' and studies some special
properties of $(1,\aleph)$-injective spaces and their universal relatives.
We use ideas of Lindenstrauss to give a characterization of $(1,\aleph)$-injective spaces by means
of intersection properties of balls (Lemma~\ref{crit}) and we prove that under the generalized
continuum hypothesis $(1,\aleph)$-injective spaces are in fact universally $(1,\aleph)$-injective.
The characterization just mentioned opens the door to the main examples worked in
Sections~\ref{C(K)-spaces} and~\ref{ultra}: spaces of continuous functions on compacta and ultraproducts.
Theorem \ref{people} unifies and extends several characterizations of $(1, \aleph)$-injective
$C(K)$-spaces in terms of properties of the compact space $K$.
The space $\ell_\infty/c_0 =C(\N^*)$ is shown to be $(1,\aleph_1)$-injective but not
$(1, \aleph_2)$-injective.
Theorem~\ref{lind2} establishes that ultraproducts via $\aleph$-good ultrafilters become
$(1,\aleph)$-injective whenever they are Lindenstrauss spaces.
As a corollary we solve a question of Bankston by showing that ultracoproducts of arbitrary compact
spaces over $\aleph$-good ultrafilters are $F_\aleph$-spaces.
The characterization of $(1,\aleph)$-injective $C(K)$ spaces as those in which the compact $K$ is
an $F_\aleph$-space will lead us to study projectiveness properties of these compacta which are
interesting in its own right.
As it is well-known, a Banach space is 1-injective if and only if it is isometrically isomorphic
to $C(K)$ for some extremely disconnected compact space $K$.
On the other hand, such compacta are precisely the projective elements in the category of compacta
and continuous maps, a classical result by Gleason.
This means that if $\sigma: L\to M$ is a continuous surjection then any continuous map $\vp:K\to M$
lifts to $L$ in the sense that there is $\tilde \vp:K\to L$ such that $\vp=\sigma\circ\tilde\vp$.
Motivated by these correspondences, in Section~\ref{projectiveness} we explore the projectiveness
properties of compact $F_\aleph$-spaces since, as mentioned before, a compact space $K$ is an
$F_\aleph$-space precisely when the Banach space $C(K)$ is $(1,\aleph)$-injective.
According to a result of Neville and Lloyd, totally disconnected $F_\aleph$-spaces can be
characterized as those compact spaces which are projective with respect to surjections
$\sigma:L\to M$ between compacta of weight less than $\aleph$.
Theorem~\ref{metricallyprojective} states that this is also equivalent to projectiveness with respect
to compacta that are hereditarily of Lindel\"{o}f number below $\aleph$.
At the end of Section~\ref{projectiveness} we present a characterization of $F_\aleph$-spaces without
any connectedness hypothesis, namely, that a compact space is an $F_\aleph$-space if and only if
it is ``projective'' with respect to all affine surjections between compact convex sets of weight
less that $\aleph$.

We close with a few open problems that arise naturally from the content of the paper and we were
unable to resolve.

\section{Preliminaries}\label{pre}

\subsection{Notations, conventions}
All Banach spaces will be assumed to be real. All the results in this paper can be translated to
the complex case, sometimes with some extra effort, but we have preferred not to do that.

Our notation is fairly standard, as in \cite{lindtzaf}, except perhaps in that given a cardinal
number $\aleph$ we denote by $\ell_\infty(\aleph)$ the space of all bounded functions defined on
an unspecified set $\Gamma$ with $|\Gamma|=\aleph$, endowed with the sup norm and $c_0(\aleph)$
the closed subspace spanned by the characteristic functions of the singletons of $\Gamma$.
By $\ell_1(\aleph)$ we denote the space of absolutely summable families of scalars indexed by
$\Gamma$ with the sum norm.
A Banach space $X$ is said to be a $\mathscr{L}_{\infty, \lambda}$-space if every finite dimensional
subspace $F$ of $X$ is contained in another finite dimensional subspace of $X$ whose Banach-Mazur
distance to the corresponding $\ell_\infty^n$ is at most $\lambda$.
A $\mathscr{L}_\infty$-space is just a $\mathscr{L}_{\infty,\lambda}$-space for some $\lambda \geq 1$.
A Lindenstrauss space is a $\mathscr{L}_{\infty,1+}$-space, that is a Banach space which is a
$\mathscr{L}_{\infty,\lambda}$-space for all $\lambda>1$.

As usual, given a compact space $K$ we denote by $C(K)$ the Banach space of all real-valued
continuous functions on $K$, with the sup norm.
In this paper all topological spaces are assumed to be Hausdorff.
An $\mathcal M$-space is a Banach lattice where $\|x+y\|=\max (\|x\|,\|y\|)$ provided $x,y$
are disjoint.
Each $\mathcal{M}$-space can be represented as a sublattice of some $C(K)$-space.
Throughout the paper, {\sf ZFC} denotes the usual setting of set theory with the Axiom of Choice,
while {\sf CH} denotes the continuum hypothesis ($\aleph_1= 2^{\aleph_0} = \mathfrak c$) and
{\sf GCH} denotes the generalized continuum hypothesis (namely that $\aleph^+= 2^{\aleph}$ holds
for all infinite cardinals $\aleph$).

\subsection{Exact sequences}
A short exact sequence of Banach spaces is a diagram
\begin{equation}\label{SEX}
0 \To Y \stackrel{\imath}\To X \stackrel{\pi}\To Z \To 0
\end{equation}
where $Y$, $X$ and $Z$ are Banach spaces and the arrows are operators in such a way that
the kernel of each arrow coincides with the image of the preceding one.
By the open mapping theorem $\imath$ embeds $Y$ as a closed subspace of $X$ and $Z$ is
isomorphic to the quotient $X/\imath(Y)$.

The sequence (\ref{SEX}) is said to be trivial, or to split, if there is an operator
$p:X\to Y$ such that $p\imath={\bf 1}_Y$ (i.e., $\imath(Y)$ is complemented in $X$);
equivalently, there is an operator $s:Z\to X$ such that $\pi s={\bf 1}_Z$.
When properly classified and organized, the set of all possible exact sequences of the
form (\ref{SEX}) become a linear space, denoted by $\Ext(Z,Y)$, whose zero is the class
of trivial sequences; see  \cite{cabecastlong,castgonz} for explanations.
For this reason one often writes $\Ext(Z,Y)=0$ to indicate that every sequence of the
form (\ref{SEX}) is trivial.

A property $\mathscr P$ is said to be a \emph{3-space property} if $X$ has $\mathscr P$
whenever there is an exact sequence of the form (\ref{SEX}) in which both $Y$ and $Z$
have $\mathscr P$.

\subsection{The push-out and pull-back constructions}
A thorough description of the pull-back and push-out constructions in Banach spaces can be seen
in \cite{accgm1, accgm2,castgonz}.
Everything we need to know for this paper is that given an exact sequence (\ref{SEX}) and an
operator $t:Y\to B$ there is a commutative diagram
\begin{equation}\label{po-seq}
\begin{CD}
0  @>>>  Y @>\imath>> X @>\pi>> Z @>>>0 \\
  &  &   @Vt VV  @VVt'V
   @| \\
0  @>>> B @>\imath'>> \PO  @>>> Z@>>> 0
\end{CD}
\end{equation}
called the associated push-out diagram, in which  the lower row is an exact sequence which splits
if and only if $t$ extends to $X$, that is, there is an operator $T:X\to B$ such that $T\imath=t$.

Proceeding dually one obtains the associated pull-back sequence. Given an exact sequence (\ref{SEX})
and an operator $u:A\to Z$ there is a commutative diagram
\begin{equation}\label{pb-seq}
\begin{CD}
0  @>>>  Y @>\imath>> X @>\pi>> Z @>>>0 \\
  &  & @|  @A{'}\!u AA  @AA u A
    \\
0@>>> Y@>>> \PB  @>{'}\!\pi >> A@>>>0\\
\end{CD}
\end{equation}
whose lower sequence is  exact, and which shall be referred to as a pull-back diagram.
The splitting criterion is now as follows: the lower sequence splits if and only if $u$ lifts to $X$,
that is, there is an operator $U:A\to X$ such that $\pi U=u$.

\subsection{Filters}
Recall that a family $\mathscr F$ of subsets of a given set $I$ is said to be a filter if it is
closed under finite intersection, does not contain the empty set and one has $A\in\mathscr F$
provided $B\subset A$ and $A\in\mathscr F$.
An ultrafilter on $I$ is a filter which is maximal with respect to inclusion.
If $X$ is a (Hausdorff) topological space, $f:I\to X$ is a function, and $x\in X$, one says that
$f(i)$ converges to $x$ along $\mathscr F$ (written $x=\lim_\mathscr F f(i)$ to short) if
whenever $V$ is a neighborhood of $x$ in $X$ the set $f^{-1}(V)=\{i\in I: f(i)\in V\}$ belongs
to $\mathscr F$.
The obvious compactness argument shows that if $X$ is compact, and $\mathscr F$ is an ultrafilter
on $I$, then for every function $f:I\to X$ there is a unique $x\in X$ such that $x=\lim_\mathscr F f(i)$.

\subsection{The set-theoretic ultraproduct construction}\label{set}
It will be used in Section~\ref{ultra}. Let us recall some definitions, and fix notations.

Let $(S_i)_{i\in I}$ be a family of sets indexed by $I$ and let $\mathscr U$ be an ultrafilter on $I$.
The set-theoretic (or model-theoretic) ultraproduct $\langle S_i\rangle_\mathscr U$ is the product set
$\prod_iS_i$ factored by the equivalence $(s_i)=(t_i)\Leftrightarrow \{i\in I:s_i=t_i\}\in \mathscr U$.
The class of $(s_i)$ in $\langle S_i\rangle_\mathscr U$ is denoted $\langle (s_i)\rangle_\mathscr U$.
If we are given functions $f_i:S_i\to K$, where $K$ is some compact space, we can define another
function $f:\langle S_i\rangle_\mathscr U\to K$ by
$f(\langle s_i\rangle_\mathscr U)=\lim_{\mathscr U(i)}f_i(s_i)$.
Keisler's paper \cite{keisler} contains a good introduction to this topic and many related things.

\section{$\aleph$-injective Banach spaces}\label{aleph-inj}

In this section we extend some results proved in \cite{accgm1} for separably injective Banach spaces.
Recall that a Banach space $E$ is separably injective ($\aleph_1$-injective according to
Definition~\ref{def}) when $E$-valued operators extends to separable super-spaces, and that $E$ is
universally separably injective (universally $\aleph_1$-injective) when $E$-valued operators extend
from separable subspaces.
Our first result generalizes \cite[Proposition 3.2]{accgm1}.

\begin{prop}\label{alef-injective}
For a Banach space $E$ and a cardinal $\aleph$, the following assertions are equivalent:
\begin{enumerate}
\item $E$ is $\aleph$-injective. \item For every cardinal $\kappa<\aleph$, every operator from
a subspace of $\ell_1(\kappa)$ into $E$ extends to $\ell_1(\kappa)$.
\item For every Banach space $X$ and each subspace $Y$ such that $\dens(X/Y)<\aleph$, every
operator $t:Y\to E$ extends to $X$.
\item If $X$ is a Banach space containing $E$ and $\dens(X/E)<\aleph$, then $E$ is complemented in $X$.
\end{enumerate}
\end{prop}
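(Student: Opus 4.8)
The natural plan is to establish the cycle $(1)\Rightarrow(2)\Rightarrow(3)\Rightarrow(1)$ and then to adjoin $(4)$ by proving $(3)\Leftrightarrow(4)$; throughout I would lean on the push-out and pull-back splitting criteria recalled above. Two of these links are essentially free. For $(1)\Rightarrow(2)$ it suffices to observe that $\dens(\ell_1(\kappa))<\aleph$ whenever $\kappa<\aleph$ (as $\aleph$ is uncountable), so that $\ell_1(\kappa)$ is one of the admissible containing spaces in the definition of $\aleph$-injectivity and the extension of operators from its subspaces is guaranteed. For $(3)\Rightarrow(1)$ one uses that a quotient map is a continuous surjection, whence $\dens(X/Y)\le\dens(X)$; thus every pair $Y\subset X$ with $\dens(X)<\aleph$ already satisfies the hypothesis of $(3)$.

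The core of the proof is $(2)\Rightarrow(3)$. Given $X$, a subspace $Y$ and an operator $t\colon Y\to E$ with $\kappa:=\dens(X/Y)<\aleph$, I would push $\seq$ (with $Z=X/Y$) out along $t$ to obtain an exact sequence $0\to E\stackrel{\iota}\to\PO\stackrel{\rho}\to Z\to 0$; by the push-out criterion it is then enough to split this sequence, i.e.\ to produce a section of $\rho$. Choose a quotient map $Q\colon\ell_1(\kappa)\to Z$ with kernel $N$ --- every Banach space of density $\kappa$ arises this way --- and use the projectivity (lifting property) of $\ell_1(\kappa)$ to lift $Q$ through the surjection $\rho$ to an operator $L\colon\ell_1(\kappa)\to\PO$ with $\rho L=Q$.

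The decisive and most delicate point is to descend $L$ to a section defined on $Z$ itself. Since $\rho L=Q$ and $N=\ker Q$, the restriction $L|_N$ takes values in $\ker\rho=\iota(E)$, so it defines an operator $\tau\colon N\to E$. Here hypothesis $(2)$ is exactly what is needed: as $N$ is a subspace of $\ell_1(\kappa)$ with $\kappa<\aleph$, the operator $\tau$ extends to some $T\colon\ell_1(\kappa)\to E$. Then $L-\iota T$ vanishes on $N$ and therefore factors as $\sigma\circ Q$ for a unique operator $\sigma\colon Z\to\PO$; a one-line computation gives $\rho\sigma Q=\rho L=Q$, and surjectivity of $Q$ yields $\rho\sigma=\mathbf 1_Z$. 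Thus the push-out sequence splits and $t$ extends to $X$. I expect this descent step --- converting a mere lifting over $\ell_1(\kappa)$ into an honest section over $Z$ --- to be the main obstacle, and it is precisely the extension hypothesis $(2)$, married to the projectivity of $\ell_1(\kappa)$, that overcomes it.

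It remains to tie in $(4)$. For $(3)\Rightarrow(4)$, if $E\subset X$ with $\dens(X/E)<\aleph$, I would apply $(3)$ to $Y=E$ and $t=\mathbf 1_E$: the resulting extension $X\to E$ restricts to the identity on $E$, hence is a projection and $E$ is complemented. For $(4)\Rightarrow(3)$, given $t\colon Y\to E$ with $\dens(X/Y)<\aleph$, I would again push $\seq$ out along $t$ to get $0\to E\to\PO\to Z\to 0$ with $\dens(\PO/E)=\dens(Z)<\aleph$; by $(4)$ the copy of $E$ is complemented in $\PO$, which is exactly the splitting of this sequence, and the push-out criterion then delivers the desired extension of $t$. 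This closes all the equivalences.
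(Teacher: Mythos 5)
Your proposal is correct and rests on exactly the same ideas as the paper's proof: the paper runs the cycle $(3)\Rightarrow(1)\Rightarrow(2)\Rightarrow(4)\Rightarrow(3)$, and its key step $(2)\Rightarrow(4)$ is precisely your ``descent'' computation (lift the quotient map from $\ell_1(\kappa)$, restrict to the kernel, extend by $(2)$, and subtract to obtain a projection/section), carried out on a general superspace rather than inside the push-out. Your reorganization --- proving $(2)\Rightarrow(3)$ directly by first pushing out and then splitting --- is only a cosmetic permutation of the same argument, so there is nothing to add.
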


\begin{proof}
It is clear that $(3)\Rightarrow (1) \Rightarrow (2)$.
To show that $(2)\Rightarrow (4)$, observe that if $\dens(X/E)<\aleph$ then there is a quotient map
$q: \ell_1(\kappa)\to X/E $ for some $\kappa<\aleph$.
The operator $q$ can be lifted to an operator $Q: \ell_1(\kappa)\to X$ whose restriction $Q_0$
to $\ker q$ actually takes values in $E$.
One therefore has a commutative diagram

$$
\begin{CD}
 0 @>>> \ker q @>i>> \ell_1(\kappa) @>q>> X/E @>>> 0\\
 & & @V{Q_0}VV @VQVV @| \\
 0 @>>> E @>>j> X @>>p> X/E @>>> 0.
\end{CD}
$$
By (2), there is a linear continuous extension $Q_1: \ell_1(\kappa) \to E$ of $Q_0$.
Since $(Q - jQ_1)i=0$, there is an operator $\nu:  X/E\to X$ such that $\nu q = Q - jQ_1$.
Since $p\nu q = q$, the expression $P ={\bf 1}_X - \nu p$ defines a projection on $X$ onto
the subspace $E$.
\smallskip

Now, to show that $(4) \Rightarrow (3)$ just form  the push-out diagram
$$
\begin{CD}
 0 @>>> Y @>\imath>> X @>\pi>> X/Y @>>> 0\\
 & & @V{t}VV @VV{t'}V @|\\
 0 @>>> E @>\imath'>> \PO @>>> \PO/E @>>> 0.
\end{CD}
$$
Since $\PO/E = X/Y$, the cardinality assumption is preserved and $E$ must be complemented
in $\PO$ by a projection $P$.
Thus, $Pt'$ yields an extension of $t$ as required.
\end{proof}

Our next result yields a homological characterization of $(2^\aleph)^+$-injectivity:

\begin{prop}\label{doselevado}
A Banach space $E$ is $(2^\aleph)^+$-injective if and only if it is complemented in
every superspace $W$ such that $W/E$ is a quotient of $\ell_\infty(\aleph)$.
\end{prop}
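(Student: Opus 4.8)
The proof will run through Proposition~\ref{alef-injective} applied to the cardinal $(2^\aleph)^+$, whose clause $(4)$ then reads that $E$ is $(2^\aleph)^+$-injective if and only if $E$ is complemented in every superspace $X$ with $\dens(X/E)\le 2^\aleph$. The link with the present formulation is the elementary fact $\dens\ell_\infty(\aleph)=2^\aleph$ (the $2^\aleph$ characteristic functions of subsets of a set of cardinality $\aleph$ are pairwise at sup-distance $1$, while $\ell_\infty(\aleph)$ has cardinality $2^\aleph$). Consequently any quotient $W/E$ of $\ell_\infty(\aleph)$ satisfies $\dens(W/E)\le 2^\aleph<(2^\aleph)^+$, and the forward implication is immediate: if $E$ is $(2^\aleph)^+$-injective, Proposition~\ref{alef-injective}$(4)$ furnishes a projection of $W$ onto $E$.

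For the converse the guiding remark is that complementation passes to intermediate spaces: if $E\subseteq X\subseteq W$ and $E$ is complemented in $W$, then the projection restricts to one of $X$ onto $E$. Hence it suffices, given $X\supset E$ with $\dens(X/E)\le 2^\aleph$, to produce a larger $W\supseteq X$ (over $E$) for which $W/E$ is a quotient of $\ell_\infty(\aleph)$; the hypothesis then complements $E$ in $W$, hence in $X$. It is convenient first to reformulate the hypothesis. Presenting a quotient of $\ell_\infty(\aleph)$ as $\ell_\infty(\aleph)/M$ and applying the push-out criterion of~(\ref{po-seq}) to $0\to M\to\ell_\infty(\aleph)\to\ell_\infty(\aleph)/M\to 0$, one checks (the case $M=0$ giving $\Ext(\ell_\infty(\aleph),E)=0$) that the hypothesis is equivalent to the assertion that \emph{every operator from a closed subspace of $\ell_\infty(\aleph)$ into $E$ extends to $\ell_\infty(\aleph)$}. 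In particular, when $X/E$ is itself a quotient $\ell_\infty(\aleph)/L$ one may take $W=X$ and finish at once; equivalently, since $\Ext(\ell_\infty(\aleph),E)=0$ the sequence $0\to E\to X\to X/E\to 0$ is the push-out of $0\to L\to\ell_\infty(\aleph)\to X/E\to 0$ along an operator $L\to E$, which extends by the reformulated hypothesis and so splits the sequence.

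The main obstacle is that a general $Z:=X/E$ with $\dens Z\le 2^\aleph$ need \emph{not} be a quotient of $\ell_\infty(\aleph)$: for instance $\ell_1(2^\aleph)$ is not one, since a quotient map onto it would embed $c_0$ into the $L_1$-space $\ell_\infty(\aleph)^*$, which is impossible. One must therefore realize $Z$ only as a \emph{subspace} of a quotient $V$ of $\ell_\infty(\aleph)$ and, simultaneously, extend the class of $0\to E\to X\to Z\to 0$ to an extension $0\to E\to W\to V\to 0$ restricting to it along $Z\hookrightarrow V$ (so that $X$ reappears inside $W$ with $W/E=V$). Manufacturing such a $V$ and $W$ is exactly where the equality $\dens\ell_\infty(\aleph)=2^\aleph$ (matching the sizes) and the $1$-injectivity of $\ell_\infty(\aleph)$ (the instrument for the enlargement) come into play; guaranteeing that the extension class survives the restriction map $\Ext(V,E)\to\Ext(Z,E)$ is the delicate step, and it is the point on which the converse genuinely turns.
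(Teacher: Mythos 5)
Your forward implication and your first reduction are both correct and agree with the paper: the hypothesis is indeed equivalent to the statement that every operator from a subspace of $\ell_\infty(\aleph)$ into $E$ extends to $\ell_\infty(\aleph)$, and your observation that $\ell_1(2^\aleph)$ is \emph{not} a quotient of $\ell_\infty(\aleph)$ correctly explains why one cannot simply take $W=X$. But the converse is not proved: you end by announcing that one must realize $Z=X/E$ inside a quotient $V$ of $\ell_\infty(\aleph)$ and then show that the class of $0\to E\to X\to Z\to 0$ lies in the image of the restriction map $\Ext(V,E)\to\Ext(Z,E)$, and you explicitly leave this ``delicate step'' unresolved. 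That step is the entire content of the sufficiency direction; surjectivity of $\Ext(V,E)\to\Ext(Z,E)$ for an embedding $Z\hookrightarrow V$ is governed by a connecting map into $\Ext^2(V/Z,E)$ and is not something you can take for granted, nor do you construct the pair $(V,W)$. So as it stands the proposal proves only the easy half plus the reformulation.

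The paper sidesteps this obstruction entirely by switching from clause (4) to clause (2) of Proposition~\ref{alef-injective}. The missing ingredient is the Claim that $\ell_1(2^\aleph)$ embeds isometrically into $\ell_\infty(\aleph)$: the dual ball of $\ell_1(2^\aleph)$ with the weak$^*$ topology is $[-1,1]^{2^\aleph}$, a continuous image of $\{0,1\}^{2^\aleph}$, and the latter has density character $\aleph$ (the clopen subsets of the Cantor cube $2^\aleph$, of which there are only $\aleph$ many, form a dense subset of $\{0,1\}^{2^\aleph}$ under the identification of sets with characteristic functions); a weak$^*$-dense set of $\aleph$ functionals then yields an isometric embedding $\ell_1(2^\aleph)\hookrightarrow\ell_\infty(\aleph)$. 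Granting this, every subspace of $\ell_1(\kappa)$ with $\kappa\leq 2^\aleph$ is a subspace of $\ell_\infty(\aleph)$, so by your own reformulation any operator from it into $E$ extends to $\ell_\infty(\aleph)$ and a fortiori to $\ell_1(\kappa)$; clause (2) of Proposition~\ref{alef-injective} then gives $(2^\aleph)^+$-injectivity with no need to extend extension classes from $Z$ to a larger $V$. If you want to salvage your argument, replace the final paragraph by this embedding and the appeal to clause (2).
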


\begin{proof}
Every quotient of $\ell_\infty(\aleph)$ has density character at most $2^\aleph$;
so the necessity is clear by (4) in the preceding Proposition.

We prove now the sufficiency as follows: we will show that $E$-valued operators from
subspaces of $\ell_\infty(\aleph)$ can be extended to the whole $\ell_\infty(\aleph)$;
which in combination with the fact that $\ell_1(2^\aleph)$ is a subspace of
$\ell_\infty(\aleph)$ and Proposition \ref{alef-injective} (2) provides the result.
Thus, let $t: Z \to E$  be an operator defined on a subspace $Z$ of $\ell_\infty(\aleph)$.
One thus gets a push-out diagram
$$
\begin{CD}
0@>>> Z @>>> \ell_\infty(\aleph) @>>> \ell_\infty(\aleph)/Z  @>>> 0\\
&& @V{t}VV @VV{t'}V  @|\\
0@>>> E @>>> \PO @>>> \PO/Z  @>>> 0.
\end{CD}
$$
By hypothesis, $E$ is complemented in $\PO$ and thus there is a linear continuous projection
$P:\PO\to E$.
The operator $Pt': \ell_\infty(\aleph) \to E$ extends $t$.
Thus, it has been shown that every operator $t: Z\to E$ from a subspace of $\ell_\infty(\aleph)$
can be extended to the whole $\ell_\infty(\aleph)$.
To finish the proof it remains to prove the following result; probably it is known, but we
were unable to find an explicit reference:
\medskip

{\sc Claim}: $\ell_1(2^\aleph)$ is a subspace of $\ell_\infty(\aleph)$.

{\sc Proof of the Claim.}
The dual ball of $\ell_1(2^\aleph)$ in its weak$^*$-topology is homeomorphic to the product
$[-1,1]^{2^\aleph}$ which is a continuous image of $\{0,1\}^{2^\aleph}$.
This space has density $\aleph$, as we show now: observe that subsets of $2^\aleph$ can be
interpreted as elements of $\{0,1\}^{2^\aleph}$ via their characteristic functions.
Let us show that with this interpretation the clopen sets of $2^\aleph$ form a dense set of
$\{0,1\}^{2^\aleph}$: take a basic open set $U$; i.e., take points $p_1,\dots,p_n$ and
$q_1,\dots,q_m$ from $2^\aleph$ and form the basic open set
$$
U= \{x \in \{0,1\}^{2^\aleph}: \;\; x_{p_i} = 1 \quad \mathrm{and}\quad x_{q_i}=0\}.
$$
Find a clopen $C$ of $2^\aleph$ such that $p_1,\dots,p_n$ are in $C$, but $q_1,...,q_n$ do
not belong to $C$.
The characteristic function $\mathrm{1}_C\in U$.
Thus, since $2^\aleph$ has $\aleph$ many clopens, the dual ball of $\ell_1(2^\aleph)$ in
its weak$^*$-topology has density $\aleph$; and thus $\ell_1(2^\aleph)$ in its weak$^*$-topology
has density $\aleph$; and therefore $\ell_1(2^\aleph)$ can be embedded into $\ell_\infty(\aleph)$,
and the Claim is proved.
%
\end{proof}

The stability properties of the classes of (universally) $\aleph$-injective spaces are
gathered in the following Proposition (compare to \cite[Proposition 3.7]{accgm1}).

\begin{prop}\label{quotsicard}
Let $\aleph$ be an infinite cardinal.
\begin{enumerate}
\item The class of $\aleph$-injective spaces has the 3-space property.
\item The quotient of an $\aleph$-injective space by an $\aleph$-injective subspace is
$\aleph$-injective.
\item If $\varkappa \leq \aleph$, the quotient of a universally $\aleph$-injective space
by a $\varkappa$-injective subspace is universally  $\varkappa$-injective.
\end{enumerate}
\end{prop}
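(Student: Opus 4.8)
The plan is to derive all three parts from the characterizations of $\aleph$-injectivity in Proposition~\ref{alef-injective}, especially items (3) and (4), together with the pull-back construction (\ref{pb-seq}) and its splitting criterion. Parts (2) and (3) share a single mechanism, so I would prove them in parallel; part (1) needs a separate and more delicate argument.

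For (2) and (3), let $F\subset E$ be the subspace in question and $q\colon E\to E/F$ the quotient map. Given a subspace $Y$ of a space $X$ and an operator $t\colon Y\to E/F$, the first step is to pull back the sequence $0\to F\to E\stackrel{q}{\to} E/F\to 0$ along $t$, producing an exact sequence $0\to F\to \PB\stackrel{\pi'}{\to} Y\to 0$ together with a map $\PB\to E$. The crucial feature is that $\PB/F\cong Y$, so its density character is controlled: in case (2) one has $\dens(Y)\le\dens(X)<\aleph$, and in case (3) one has $\dens(Y)<\varkappa$ by hypothesis. In either situation the appropriate injectivity of $F$ --- via Proposition~\ref{alef-injective}(3) applied to the identity $F\to F$ --- shows that $F$ is complemented in $\PB$, i.e.\ the pulled-back sequence splits. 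By the splitting criterion for (\ref{pb-seq}) this is exactly the statement that $t$ lifts to an operator $T_0\colon Y\to E$ with $qT_0=t$. It then remains to extend $T_0$ across $Y\subset X$: in case (2) this uses the $\aleph$-injectivity of $E$ with $\dens(X)<\aleph$, and in case (3) the universal $\aleph$-injectivity of $E$ with $\dens(Y)<\varkappa\le\aleph$. Composing the resulting extension with $q$ yields the desired extension of $t$, giving the $\aleph$-injectivity of $E/F$ in (2) and its universal $\varkappa$-injectivity in (3).

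For the $3$-space property (1), I would instead verify the complementation criterion Proposition~\ref{alef-injective}(4). Let $0\to Y\to X\to Z\to 0$ be exact with $Y$ and $Z$ both $\aleph$-injective, and suppose $X$ is a closed subspace of a space $W$ with $\dens(W/X)<\aleph$; the goal is to complement $X$ in $W$. Dividing by $Y$, the subspace $Z=X/Y$ of $W/Y$ satisfies $(W/Y)/Z\cong W/X$, which has density below $\aleph$, so the $\aleph$-injectivity of $Z$ and item (4) yield a projection of $W/Y$ onto $Z$. Let $N$ be the preimage in $W$ of its kernel; a direct check gives $Y\subseteq N$, $X\cap N=Y$ and $X+N=W$, and moreover $N/Y\cong W/X$, so that $\dens(N/Y)<\aleph$. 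Now the $\aleph$-injectivity of $Y$ and item (4) provide a complement $M$ of $Y$ in $N$; since $X\cap M\subseteq(X\cap N)\cap M=Y\cap M=0$ and $X+M\supseteq X+N=W$, we conclude $W=X\oplus M$. Thus $X$ is complemented in $W$, and (4) gives that $X$ is $\aleph$-injective.

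The routine parts are the diagram chases and the repeated verification that the relevant quotients keep density below the prescribed cardinal. I expect the main obstacle to lie in part (1): one must correctly identify the kernel of the projection onto $Z=X/Y$, whose preimage $N$ has $N/Y\cong W/X$ --- the point that keeps its density small enough to invoke the injectivity of $Y$ --- and then splice the two complementations into the single decomposition $W=X\oplus M$. Parts (2) and (3) are comparatively straightforward once one notices that pulling back along $t$ reduces everything to the density of $Y$.
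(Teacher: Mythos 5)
Your parts (2) and (3) follow the paper's proof essentially verbatim: the same pull-back along $t$, the same observation that $\PB/F\cong Y$ keeps the density below the relevant cardinal so that the $\aleph$- (resp.\ $\varkappa$-) injectivity of $F$ splits the pulled-back sequence and hence lifts $t$, and the same final extension step using the (universal) injectivity of $E$. Part (1), however, takes a genuinely different route. The paper verifies criterion (2) of Proposition~\ref{alef-injective}: given $\phi\colon K\to E$ with $K$ a subspace of $\ell_1(\kappa)$, $\kappa<\aleph$, it extends $\pi\phi$ to $\Phi\colon\ell_1(\kappa)\to G$, lifts $\Phi$ to $\Psi\colon\ell_1(\kappa)\to E$ using the projectivity of $\ell_1(\kappa)$, observes that $\phi-\Psi\imath$ takes values in $F$ and extends it there, and adds the two pieces. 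You instead verify the complementation criterion (4) by a two-step splitting: first complement $Z=X/Y$ in $W/Y$, then complement $Y$ in the preimage $N$ of the kernel, and splice the two into $W=X\oplus M$. Your diagram chase is sound --- $N/Y\cong(W/Y)/Z\cong W/X$ keeps the density below $\aleph$, $X\cap M\subseteq Y\cap M=0$, $X+M\supseteq X+N=W$ since $N=Y+M\subseteq X+M$, and the open mapping theorem upgrades the algebraic decomposition to a topological one. The paper's argument is the standard computation showing that any property of the form $\Ext(\,\cdot\,,-)=0$ is a 3-space property (as the authors remark after the proposition) and leans on the projectivity of $\ell_1(\kappa)$; yours is the dual complementation argument, which uses only the equivalences already established in Proposition~\ref{alef-injective}. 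Both are complete proofs.
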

\begin{proof}
The proof of (1) follows from part (2) in Proposition \ref{alef-injective}:
let us consider an exact sequence $0 \To F \stackrel{j}\To E \stackrel{\pi}\To G \To 0$
in which both $F$ and $G$ are $\aleph$-injective.
Let $\phi: K \to E$ be an operator from a subspace $K$ of $\ell_1(\kappa)$ with $\kappa <\aleph$,
and let $\imath :K \to \ell_1(\kappa)$ denote the natural embedding; then $\pi\phi$ can be
extended to an operator $\Phi: \ell_1(\kappa) \to G$, which can in turn be lifted to an operator
$\Psi: \ell_1(\kappa) \to E$.
The difference $\phi - \Psi \imath$ takes values in $F$ and can thus be extended to an operator
$e: \ell_1(\kappa) \to F$.
The desired operator is $ \Psi + je$.

To prove (2) let us consider an exact sequence $ 0 \To F \To E \stackrel{\pi}\To G \To 0$ in
which both $F$ and $E$ are $\aleph$-injective.
Let $\phi: Y \to G$ be an operator from a subspace $Y$ of a space $X$ with
$\mathrm{dens}\; X < \aleph$.
Consider the pull-back diagram
$$
\begin{CD} 0@>>> F @>>> E @>\pi>> G @>>> 0\\
& & @| @AA{\Phi}A @AA{\phi}A\\
 0@>>> F@>>> \PB @>Q>>
Y@>>> 0\end{CD}
$$
and observe that since $F$ is $\aleph$-injective, the lower exact sequence splits, so $Q$
admits a linear continuous selection $s:Y\to \PB$.
By the $\aleph$-injectivity of $E$, there exists an operator $T:X\to E$ agreeing with
$\Phi s$ on $Y$.
Then $\pi T:X\to G$ is the desired extension of $\phi$ since
$\pi T|_Y = \pi\Phi s =\phi Q s = \phi$.

To prove (3), assume that $E$ is universally $\aleph$-injective and $F$ is $\varkappa$-injective.
The previous proof, reproduced verbatim, shows that $G$ is universally $\varkappa$-injective.
\end{proof}

It is perhaps worth to remark that an abstract homological proof that all properties having
the form $\Ext(X, -)=0$ are 3-space properties can be found in \cite{cabecastlong}.
The connection with Proposition \ref{quotsicard} (1) is that Proposition \ref{alef-injective} (4)
can be read in this language: a Banach space $E$ is $\aleph$-injective if and only if it verifies
$\Ext(F, E)=0$ for all Banach spaces $F$ with $\mathrm{dens}\; F<\aleph$.
 \smallskip

It would be interesting to know whether the class of universally $\aleph$-injective spaces enjoys
the 3-space property.
This was shown for $\aleph=\aleph_1$ in  \cite{accgm1}, but the proof is based on the equivalence
with the property of $\ell_\infty$-super-saturation: every separable subspace of $E$ is contained
in a copy of $\ell_\infty$ contained in $E$; see \cite[Proposition 5.2]{accgm1}.
Apparently there is no higher cardinal analogue for such property.
Indeed, the obvious extension fails because there exist injective Banach spaces with arbitrarily
large density character, like the spaces $L_\infty(\mu)$ for finite $\mu$, that do not contain
subspaces isomorphic to $\ell_\infty(\aleph_1)$ --this is so since a family of mutually disjoint
sets of positive measure on a finite measure space must be countable.
We can obtain a partial analogue introducing the following concept.

\begin{defin}
Let $\aleph$ be an infinite cardinal.
We say that a subspace $Y$ of a Banach space $X$ if \emph{$c_0(\aleph)$-supplemented} if there
exists another subspace $Z$ of $X$ isomorphic to $c_0(\aleph)$ such that $Y\cap Z=0$ and the sum
$Y+Z$ is closed.
In this case we will also say that $Z$ is a \emph{$c_0(\aleph)$-supplement} of $Y$.
\end{defin}

\begin{lema}\label{c_0-suppl}
Each subspace of $\ell_\infty(\aleph)$ with density character $\leq\aleph$  is
$c_0(\aleph)$-supplemented.
\end{lema}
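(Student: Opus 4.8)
The plan is to produce the $c_0(\aleph)$-supplement as the closed linear span of a suitable family of \emph{disjointly supported} norm-one vectors. Write $\ell_\infty(\aleph)=\ell_\infty(\Gamma)$ with $|\Gamma|=\aleph$. If $(z_\alpha)_{\alpha<\aleph}$ are norm-one vectors with pairwise disjoint supports, then for finitely supported coefficients $\|\sum_\alpha a_\alpha z_\alpha\|=\max_\alpha|a_\alpha|$, so their closed linear span $Z$ is isometric to $c_0(\aleph)$ (with $z_\alpha$ playing the role of the unit vectors). It therefore only remains to choose the $z_\alpha$ so that $Z$ is a supplement of $Y$, and for this I would aim for a single uniform estimate: a constant $c>0$ with $\operatorname{dist}(z,Y)\geq c\|z\|$ for every $z\in Z$. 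Such an estimate forces $Y\cap Z=0$ at once, and a routine Cauchy-sequence argument (using $\|z_n-z_m\|\leq c^{-1}\operatorname{dist}(z_n-z_m,Y)\leq c^{-1}\|(y_n+z_n)-(y_m+z_m)\|$) makes $Y+Z$ closed.

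The key reduction is that this global estimate follows from a purely local one, thanks to the disjointness of supports. Put $A_\alpha=\supp z_\alpha$. Given $z=\sum_\alpha a_\alpha z_\alpha\in Z$, the value $\|z\|=\max_\alpha|a_\alpha|$ is attained at some index $\alpha_0$, and on $A_{\alpha_0}$ the vector $z$ coincides with $a_{\alpha_0}z_{\alpha_0}$. Hence for every $y\in Y$, restricting to $A_{\alpha_0}$ gives $\|z-y\|\geq\sup_{\gamma\in A_{\alpha_0}}|a_{\alpha_0}z_{\alpha_0}(\gamma)-y(\gamma)|=|a_{\alpha_0}|\,\operatorname{dist}_{\ell_\infty(A_{\alpha_0})}(z_{\alpha_0},Y|_{A_{\alpha_0}})$, where I used that $Y$ is a subspace so that $a_{\alpha_0}^{-1}y|_{A_{\alpha_0}}$ again runs through $Y|_{A_{\alpha_0}}:=\{y|_{A_{\alpha_0}}:y\in Y\}$. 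So it is enough to arrange that each $z_\alpha$, viewed inside $\ell_\infty(A_\alpha)$, stays at distance at least $c$ from the restricted subspace $Y|_{A_\alpha}$; then $\operatorname{dist}(z,Y)\geq c\|z\|$ for all $z\in Z$.

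To manufacture such local vectors I would partition $\Gamma$ into $\aleph$ pairwise disjoint blocks $(A_\alpha)_{\alpha<\aleph}$, each of cardinality $\aleph$ (possible since $\aleph\cdot\aleph=\aleph$). The point of taking the blocks \emph{large} is a density-character count: the characteristic functions of subsets of $A_\alpha$ form a $1$-separated subset of the unit ball of $\ell_\infty(A_\alpha)$ of size $2^\aleph$, so $\dens(\ell_\infty(A_\alpha))\geq 2^\aleph$. On the other hand $Y|_{A_\alpha}$, being the image of $Y$ under the norm-decreasing restriction map, has a dense subset of cardinality at most $\dens(Y)\leq\aleph<2^\aleph$. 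Consequently $Y|_{A_\alpha}$ cannot be dense in $\ell_\infty(A_\alpha)$, and Riesz's lemma yields a norm-one $z_\alpha\in\ell_\infty(A_\alpha)$ (extended by zero off $A_\alpha$) with $\operatorname{dist}(z_\alpha,\overline{Y|_{A_\alpha}})\geq \tfrac12$. These $z_\alpha$ have disjoint supports and satisfy the local estimate with $c=\tfrac12$, completing the construction.

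The step that genuinely uses the hypothesis, and which I expect to be the delicate one, is the density-character comparison: it is essential that the blocks $A_\alpha$ be of full size $\aleph$ rather than merely countable, since if $\aleph\geq\mathfrak{c}$ the restriction of $Y$ to a countable block could well be dense in $\ell_\infty$. Taking $|A_\alpha|=\aleph$ guarantees $2^\aleph>\aleph\geq\dens(Y)$ by Cantor's theorem, and this strict inequality is exactly what prevents $Y$ from filling up any block. Everything else—the isometry with $c_0(\aleph)$, the local-to-global distance estimate, and the passage from the uniform lower bound to triviality of the intersection and closedness of the sum—is routine.
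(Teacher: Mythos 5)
Your proof is correct, and at the decisive step it takes a genuinely different route from the paper's. Both arguments begin the same way: partition the index set into $\aleph$ disjoint blocks $A_\alpha$ of size $\aleph$ and use the density count $\dens(\ell_\infty(A_\alpha))\geq 2^\aleph>\aleph\geq\dens(Y)$ to select disjointly supported norm-one vectors that span an isometric copy of $c_0(\aleph)$. The difference is in how one then shows that this copy is a supplement. The paper only arranges $\operatorname{dist}(x_j,Y)>1/2$ for each single vector; since this says nothing about linear combinations, it must invoke Rosenthal's theorem \cite[Theorem 3.4]{disjoint} to extract a subfamily of full cardinality on which the quotient map $\ell_\infty(I)\to\ell_\infty(I)/Y$ is an isomorphism, and that subfamily's span is the supplement. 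You instead strengthen the selection: each $z_\alpha$ is chosen at distance $\geq 1/2$ from the \emph{restricted} subspace $Y|_{A_\alpha}$ inside $\ell_\infty(A_\alpha)$ (which the same density count permits, via Riesz's lemma applied to the proper closed subspace $\overline{Y|_{A_\alpha}}$), and then the disjointness of supports converts this local bound into the uniform estimate $\operatorname{dist}(z,Y)\geq\frac{1}{2}\|z\|$ for every $z$ in the closed span, by restricting to the block where $\max_\alpha|a_\alpha|$ is attained. This yields $Y\cap Z=0$ and the closedness of $Y+Z$ directly, with no appeal to Rosenthal's theorem and no passage to a subfamily, so your argument is more elementary and self-contained. (One cosmetic point: in your restriction inequality the intermediate ``$=$'' should be ``$\geq$'' for a fixed $y$; equality with the distance only appears after taking the infimum over $y\in Y$, which is how you use it.)
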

\begin{proof}
Let $I$ have cardinality $\aleph$ and let $\{I_j : j\in J\}$ be a family of disjoint subsets of $I$
with $|I_j|=\aleph$ for every $j$ and $|J|=\aleph$.
Let $Y$ be a subspace of $\ell_\infty(I)$ with density character $\leq\aleph$.
Since $\dens\big(\ell_\infty(I_j)\big) > \aleph$, for each $j\in J$ we can find
$x_j\in \ell_\infty(I_j)$ with $\|x_j\|=1$ and ${\rm dist}(x_j,Y)>1/2$.
In this way we obtain a family $\{x_j : j\in J\}$ in $\ell_\infty(I)$ isometrically equivalent
to the basis of $c_0(I)$. Let $\pi:\ell_\infty(I)\to \ell_\infty(I)/Y$ denote the quotient map.
Since $\inf\{\|\pi(x_j)\| : j\in J\} \geq 1/2 >0$, by \cite[Theorem 3.4]{disjoint} there exists
$J_1 \subset J$ with $|J_1|=|J|$ such that the restriction of $\pi$ to the closed subspace
generated by $\{x_j : j\in J\}$ is an isomorphism.
That  space is a $c_0(\aleph)$-supplement of $Y$.
\end{proof}

We thus get the partial extension result announced above.

\begin{teor}\label{aleph-Thetastar=aleph-Sinfty}
Let $X$ be a universally $\aleph^+$-injective Banach space and let $Y$ be a
$c_0(\aleph)$-supplemented subspace of $X$ with $\dens(Y)\leq \aleph$.
Then $Y$ is contained in a subspace of $X$ isomorphic to $\ell_\infty(\aleph)$.
\end{teor}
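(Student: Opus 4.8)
The plan is to manufacture inside $X$ an isomorphic copy of $\ell_\infty(\aleph)$ by transporting $Y$ together with its $c_0(\aleph)$-supplement into a single copy of $\ell_\infty(\aleph)$, inverting that embedding, extending the inverse into $X$ by universal injectivity, and finally fattening a $c_0$ to an $\ell_\infty$ by a Rosenthal-type extraction. Write $\Gamma$ for an index set of size $\aleph$, let $Z\cong c_0(\Gamma)$ be the supplement, put $V:=Y\oplus Z$ (which is closed, with bounded projections $P:V\to Z$ and $Q=\mathrm{id}-P:V\to Y$), and note $\dens V\le\aleph$. The governing tool is that, since $X$ is universally $\aleph^+$-injective and $\dens V\le\aleph<\aleph^+$, every operator defined on $V$ — or on any subspace of density $\le\aleph$ of any Banach space — extends, with values in $X$, to the whole ambient space (Definition~\ref{def}).

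Step one is an embedding $\theta:V\to\ell_\infty(\Gamma)$ carrying $Z$ onto the canonical $c_0(\Gamma)$. I would fix a norming family $(y^*_\gamma)_{\gamma\in\Gamma}\subset B_{Y^*}$ for $Y$, chosen so that each functional is repeated countably often (so that deleting finitely many indices still leaves a norming family), and for $z\in Z$ let $(z_\gamma)_\gamma\in c_0(\Gamma)$ denote its coordinates under $Z\cong c_0(\Gamma)$. Define $\theta(v)_\gamma:=y^*_\gamma(Qv)+(Pv)_\gamma$. Then $\theta$ is bounded and $\theta(Z)=c_0(\Gamma)$. It is also bounded below: given $v=y+z$, the $c_0$-decay of $(z_\gamma)_\gamma$ means only finitely many coordinates of the $Z$-part exceed any fixed threshold, so one can select coordinates where $(Pv)_\gamma$ is small while $|y^*_\gamma(y)|$ is near $\|y\|$ (recovering $\|Qv\|$), together with a coordinate where $|(Pv)_\gamma|$ is near $\|z\|$ (recovering $\|Pv\|$ when it dominates); since $\|v\|\approx\max(\|y\|,\|z\|)$ this gives $\|\theta(v)\|\gtrsim\|v\|$. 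Thus $\theta$ is an isomorphic embedding with $\theta(Z)=c_0(\Gamma)$.

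Step two extends $\theta^{-1}:\theta(V)\to V\hookrightarrow X$ to an operator $\Phi:\ell_\infty(\Gamma)\to X$; this is legitimate because $\theta(V)$ is a subspace of $\ell_\infty(\Gamma)$ of density $\le\aleph$ and $X$ is universally $\aleph^+$-injective. By construction $\Phi(\theta(y))=y$ for every $y\in Y$, so $Y\subseteq\Phi(\ell_\infty(\Gamma))$, while $\Phi$ restricted to $c_0(\Gamma)$ is the isomorphism $\theta^{-1}|_{c_0(\Gamma)}$ onto $Z$; in particular $\inf_\gamma\|\Phi e_\gamma\|>0$. Invoking the Rosenthal-type extraction theorem \cite{disjoint} already used in the proof of Lemma~\ref{c_0-suppl}, there is $\Gamma'\subseteq\Gamma$ with $|\Gamma'|=\aleph$ such that $\Phi|_{\ell_\infty(\Gamma')}$ is an isomorphism; then $W:=\Phi(\ell_\infty(\Gamma'))$ is a copy of $\ell_\infty(\aleph)$ inside $X$.

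The main obstacle is the final reconciliation: $W$ is built from the thinned index set $\Gamma'$, whereas $\theta(Y)$ is genuinely spread across all of $\Gamma$, so a priori the passage to $\Gamma'$ may lose part of $Y$ (note that $Y$ cannot be complemented in $W$ in general, e.g. when $Y\cong c_0(\aleph)$, so a crude direct-sum embedding is ruled out). Resolving this is the crux. The route I would pursue is to first apply the extraction to the supplement alone — extending $Z\cong c_0(\Gamma)$ to an $\ell_\infty(\Gamma)$ and thinning to a clean copy $W_1\cong\ell_\infty(\aleph)$ of $\ell_\infty(\aleph)$ in $X$ in which $Z$ already appears as the canonical $c_0$ — and then to carry out Steps one and two relative to this clean copy, writing the extension as $\Phi=\iota_1+\tilde D$, where $\iota_1:\ell_\infty(\Gamma)\to W_1\hookrightarrow X$ is the isomorphism onto $W_1$ and $\tilde D$ is an extension of the correction $\theta(v)\mapsto v-\iota_1(\theta(v))$, which vanishes on $c_0(\Gamma)$. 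One then has to show that this particular $\Phi$ is already an isomorphism on a copy of $\ell_\infty(\aleph)$ whose preserved coordinates carry $\theta(Y)$, so that no thinning disturbs $Y$. Verifying that the correction $\tilde D$ perturbs the clean inclusion $\iota_1$ without destroying the $\ell_\infty$-copies through $\theta(Y)$ is where the real work lies, and is the step I expect to be the most delicate.
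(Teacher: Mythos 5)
You have correctly identified the crux, but you have not resolved it: after the Rosenthal extraction the copy $\Phi(\ell_\infty(\Gamma'))$ need not contain $Y$, and your proposed repair (writing $\Phi=\iota_1+\tilde D$ with $\tilde D$ vanishing on $c_0(\Gamma)$) is left unverified at precisely the point where it would have to do all the work. As stated it does not obviously succeed: a sum of an into-isomorphism and a correction that vanishes only on $c_0(\Gamma)$ has no reason to be bounded below on all of $\ell_\infty(\Gamma)$, since nothing controls how $\tilde D$ interferes with $\iota_1$ off $c_0(\Gamma)$. So the proposal has a genuine gap at its final step. (Steps 1 and 2 themselves are fine; the embedding $\theta$ with repeated norming functionals is bounded below by the argument you indicate.)

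The paper closes this gap with a Lindenstrauss--Rosenthal type correction formula rather than by thinning the extension $\Phi$. Fix an isometric copy $Y_0\subset\ell_\infty(\aleph)$ of $Y$ and an isomorphism $t:Y_0\to Y$. The Rosenthal extraction is applied not to $\Phi$ but to the compositions with the quotient maps $\pi:X\to X/Y$ and $\pi_0:\ell_\infty(\aleph)\to\ell_\infty(\aleph)/Y_0$; there losing $Y$ is harmless because $Y$ is killed by $\pi$, and the outcome is a pair of projections $P$ on $X$ and $Q$ on $\ell_\infty(\aleph)$ with $Y\subset\ker P$, $Y_0\subset\ker Q$ and $\ran P\cong\ran Q\cong\ell_\infty(\aleph)$ (this is where the $c_0(\aleph)$-supplements are used, to guarantee that $\pi$ and $\pi_0$ are isomorphisms on copies of $c_0(\aleph)$ to which Rosenthal's theorem applies). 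One then extends $t$ to $V:\ell_\infty(\aleph)\to\ker P$ and $t^{-1}$ to $U:X\to\ker Q$, takes an into-isomorphism $W:\ell_\infty(\aleph)\to\ran P$, and sets
$T=V+W(\mathbf{1}-UV)$.
Because $\ran V\subset\ker P$ while $\ran W\subset\ran P$, one has $\|Tx\|\geq C\max\{\|Vx\|,\|W(\mathbf{1}-UV)x\|\}$, and a two-case estimate (according to whether $\|Vx\|$ is small or large relative to $\|x\|$) shows $T$ is bounded below on all of $\ell_\infty(\aleph)$ --- no thinning of the index set occurs. Finally $\mathbf{1}-UV$ vanishes on $Y_0$, so $T|_{Y_0}=V|_{Y_0}=t$ and hence $Y=T(Y_0)$ sits inside $\ran T\cong\ell_\infty(\aleph)$. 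The two structural features your sketch is missing are exactly these: the two summands must take values in complementary subspaces to get the lower bound, and the correction must vanish on $Y_0$ (not merely on $c_0$) so that the copy of $\ell_\infty(\aleph)$ produced actually passes through $Y$.
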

\begin{proof}
Let $Y_0$ be a subspace of $\ell_\infty(\aleph)$ isomorphic to $Y$ and let $t:Y_0\to Y$ be a
(bijective) isomorphism with $\|t^{-1}\|=1$.
By Lemma \ref{c_0-suppl}, $Y_0$ is $c_0(\aleph)$-supplemented in $\ell_\infty(\aleph)$.
We can find projections $P$ on $X$ and $Q$ on $\ell_\infty(\aleph)$ such that
$Y\subset \ker P$, $Y_0\subset \ker Q$, and both ranges $\ran P$ and $\ran Q$ are isomorphic
to $\ell_\infty(\aleph)$.

Indeed, let $\pi:X\to X/Y$ be the quotient map.
Let $L_0$ and $L$ denote $c_0(\aleph)$-supplements for $Y_0$ and $Y$.
The isomorphism $t:Y_0\to Y$ admits an extension $\hat t:Y_0\oplus L_0\to Y\oplus L$,
and the universal $\aleph^+$-injectivity of $X$ allows us to find an operator
$I:\ell_\infty(\aleph)\to X$ extending $\hat t$, hence $\pi I$ is an isomorphism on a copy
$L_0$ of $c_0(\aleph)$.
A classical result of Rosenthal \cite[Theorem 1.3]{disjoint} yields that $\pi I$ is also
an isomorphism on a copy $M_0$ of $\ell_\infty(\aleph)$ inside $\ell_\infty(\aleph)$.
Therefore $M = I(M_0)$ is a subspace of $X$ isomorphic to $\ell_\infty(\aleph)$ where the
restriction of $\pi$ is an isomorphism.
Now $X/Y = \pi(M)\oplus N$, with $N$ a closed subspace.
Hence $X = M \oplus \pi^{-1}(N)$, and it is enough to take as $P$ the projection with range $M$
and kernel $\pi^{-1}(N)$.
Similarly, the quotient map $\pi_0:\ell_\infty(\aleph)\to \ell_\infty(\aleph)/Y_0$ is an
isomorphism on $L_0$, and Rosenthal's result implies that $\pi_0$ is an isomorphism on a
subspace $M_0$ of $\ell_\infty(\aleph)$ isomorphic to $\ell_\infty(\aleph)$.
Thus $\ell_\infty(\aleph)/Y_0 = \pi_0(M_0)\oplus N_0$, with $N_0$ a closed subspace.
Hence $\ell_\infty(\aleph) = M_0\oplus \pi_0^{-1}(N_0)$, and it is enough to take as $Q$
the projection with range $M_0$ and kernel $\pi_0^{-1}(N_0)$.

Since $\ker P$ and $\ker Q$ are universally $\aleph$-injective spaces, there are operators
$U:X\to \ker Q$ and $V:\ell_\infty(\aleph)\to \ker P$ such that $V|_{Y_0}= t$ and $U|_Y= t^{-1}$.
Note that $\|U\|\geq 1$.
Let $W:\ell_\infty(\aleph) \to \ran P$ be an operator satisfying $\|Wx\|\geq\|x\|$ for all $x$.
We will show that the operator
$$
T = V + W({\bf 1}_{\ell_\infty(\aleph)}-UV): \ell_\infty(\aleph) \To X
$$
is an isomorphism (into).
Since $\ran V\subset \ker P$ and $\ran W\subset \ran P$, there exists $C>0$ such that
$$
\|Tx\| \geq C \max\{ \|Vx\|, \|W({\bf 1}_{\ell_\infty(\aleph)}-UV)x\|\}
$$
for every $x\in \ell_\infty(\aleph)$.
Now, if $\|Vx\|< (2\|U\|)^{-1}\|x\|$, then $\|UVx\| < (1/2)\|x\|$; hence
$$
\|W({\bf 1}_{\ell_\infty(\aleph)}-UV)x\|\geq \|({\bf 1}_{\ell_\infty(\aleph)}-UV)x\| >\|x\|/2.
$$
Thus $\|Tx\| \geq C (2\|U\|)^{-1}\|x\|$ for every $x\in X$; hence $Y$ is contained in the
range of $T$, which is isomorphic to $\ell_\infty(\aleph)$.
\end{proof}

By the Lindenstrauss-Rosenthal theorem \cite{lindrose} any isomorphism between two separable
subspaces of $\ell_\infty$ can be extended to an automorphism of $\ell_\infty$.
As a consequence of Theorem \ref{aleph-Thetastar=aleph-Sinfty} we can prove that
universally $\aleph^+$-injective spaces enjoy a similar property.

\begin{teor}\label{aleph-Thetastar=aleph-automorphic}
Let $X$ be a universally $\aleph^+$-injective Banach space, and let $Y_1$ and $Y_2$ be
isomorphic $c_0(\aleph)$-supplemented subspaces of $X$ with $\dens(Y_i)\leq \aleph$.
Then every isomorphism from $Y_1$ onto $Y_2$ extends to an automorphism of $X$.
\end{teor}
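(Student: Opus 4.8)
The plan is to reduce the statement to a question about copies of $\ell_\infty(\aleph)$ sitting inside $X$, and then to run a Lindenstrauss--Rosenthal type gluing argument. Write $\tau\colon Y_1\to Y_2$ for the given isomorphism. Since each $Y_i$ is $c_0(\aleph)$-supplemented with $\dens(Y_i)\le\aleph$, Theorem \ref{aleph-Thetastar=aleph-Sinfty} provides subspaces $E_1,E_2\subseteq X$, both isomorphic to $\ell_\infty(\aleph)$, with $Y_i\subseteq E_i$. Because $\ell_\infty(\aleph)$ is (isomorphically) injective, each $E_i$ is complemented in $X$, say $X=E_i\oplus F_i$. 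Thus the problem splits into two tasks: extend $\tau$ to an isomorphism $\sigma\colon E_1\to E_2$, and then extend $\sigma$ to an automorphism of $X$.

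For the first task I would argue as in the proof of Theorem \ref{aleph-Thetastar=aleph-Sinfty}. Using the universal $\aleph^+$-injectivity of $X$ I can extend $\tau$ (defined on $Y_1$, whose density is $\le\aleph$) along the inclusion $Y_1\subseteq E_1$ to an operator $\Phi\colon E_1\to X$, and extend $\tau^{-1}$ along $Y_2\subseteq E_2$ to $\Psi\colon E_2\to X$. These are only one-sided approximate inverses on $Y_1,Y_2$, so, exactly as in the construction $T=V+W({\bf 1}-UV)$ of the previous proof, I would correct them by adding a term ranging in a complementary copy of $\ell_\infty(\aleph)$ (furnished by Rosenthal's theorem \cite{disjoint}) so as to turn them into genuine isomorphic embeddings. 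A Cantor--Bernstein step of Pe\l czy\'nski type, using that $\ell_\infty(\aleph)$ is isomorphic to its square $\ell_\infty(\aleph)\oplus\ell_\infty(\aleph)$, then upgrades these two isomorphic embeddings into an onto isomorphism $\sigma\colon E_1\to E_2$ agreeing with $\tau$ on $Y_1$.

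For the second task, an automorphism $\Theta$ of $X$ with $\Theta|_{E_1}=\sigma$ exists precisely when $F_1\cong F_2$: indeed such a $\Theta$ must carry $F_1$ isomorphically onto a complement of $E_2$, and every complement of $E_2$ is isomorphic to $X/E_2\cong F_2$. Here one again invokes that $E_i\cong\ell_\infty(\aleph)$ is injective and isomorphic to its square, so that the two decompositions $X=E_1\oplus F_1=E_2\oplus F_2$ can be reconciled by a decomposition argument. One must take care to carry this out so that the resulting automorphism restricts to $\sigma$, hence to $\tau$, on the nose, and not merely to \emph{some} isomorphism of $E_1$ onto $E_2$.

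I expect the matching of the complements $F_1$ and $F_2$ to be the main obstacle. Unlike the injective pieces $E_i$, the complements $F_i$ are not controlled by the hypotheses and need not be isomorphic for arbitrary choices of $E_1,E_2$; the resolution must exploit the freedom to enlarge the copies $E_i$ together with the self-squaring of $\ell_\infty(\aleph)$, performing the extensions of $\tau$ and $\tau^{-1}$ in a symmetric, back-and-forth fashion so that the leftover complements absorb one another. Everything else --- the embeddings into $\ell_\infty(\aleph)$ and the isomorphic-embedding corrections --- is a routine reprise of the techniques already used above.
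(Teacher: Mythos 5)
Your architecture matches the paper's: embed each $Y_i$ in a copy $Z_i\cong\ell_\infty(\aleph)$ of $X$ via Theorem \ref{aleph-Thetastar=aleph-Sinfty}, extend the isomorphism $T:Y_1\to Y_2$ to an onto isomorphism $Z_1\to Z_2$, and then extend across the complements. But the two steps you leave vague are exactly the ones that carry the content, and as written both have gaps.

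First, the passage from an into isomorphism $E_1\to E_2$ extending $T$ to an \emph{onto} isomorphism still extending $T$ cannot be done by a generic ``Cantor--Bernstein step of Pe\l czy\'nski type'': the decomposition method produces some isomorphism between the two spaces, with no control over its restriction to $Y_1$. The paper's fix is to arrange, before any correction is made, projections $P_i$ on $Z_i$ with $Y_i\subset\ker P_i$ and both $\ker P_i$ and $\ran P_i$ isomorphic to $\ell_\infty(\aleph)$ (this uses Lemma \ref{c_0-suppl} and Rosenthal's theorem, i.e.\ the $c_0(\aleph)$-supplementation hypothesis). Every correcting term is then supported on $\ran P_i$, hence vanishes on $Y_1$: one takes $\hat T=S_1+R({\bf 1}-S_2S_1)$ to get an into isomorphism extending $T$, and then $\tau=R_0P_1+\hat T({\bf 1}-P_1)$ to make it onto, where $R_0$ maps $\ran P_1$ onto a complement of $\hat T(\ker P_1)$ in $Z_2$. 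Agreement with $T$ on $Y_1$ is automatic because $P_1|_{Y_1}=0$. Without this device your $\sigma$ need not restrict to $\tau$.

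Second, you correctly identify the matching of the complements $F_1$ and $F_2$ as the main obstacle, but you do not resolve it; a ``symmetric back-and-forth'' is not an argument. The paper's resolution is concrete and simpler than what you envisage: one re-runs the proof of Theorem \ref{aleph-Thetastar=aleph-Sinfty} taking the operator $W$ with range inside only one of two $\ell_\infty(\aleph)$-summands of $\ran P$, so that the complement $Z'$ of $Z_i$ in $X$ still contains a complemented copy of $\ell_\infty(\aleph)$. The Pe\l czy\'nski decomposition then gives
$Z'\sim Z''\oplus\ell_\infty(\aleph)\sim Z''\oplus\ell_\infty(\aleph)\oplus\ell_\infty(\aleph)\sim Z'\oplus\ell_\infty(\aleph)\sim Z'\oplus Z_i\sim X$,
so \emph{both} complements are isomorphic to $X$ itself and hence to each other, and gluing $\tau$ with any isomorphism $F_1\to F_2$ finishes the proof. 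Note that this is legitimately applied to the complements, where no compatibility with $T$ is required --- which is precisely why the paper isolates all the delicate extension work inside the $Z_i$'s.
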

\begin{proof}
Note that we can modify the proof of Theorem \ref{aleph-Thetastar=aleph-Sinfty} in such a
way that the subspace $Z$ isomorphic to $\ell_\infty(\aleph)$ that contains $Y$ has a
complement isomorphic to $X$.
Indeed, if we write $\ran(P)$ as the direct sum of two copies of $\ell_\infty(\aleph)$ and
take $W$ so that its image is contained in one of the summands, then the complement $Z'$
of $Z$ in $X$ contains a subspace isomorphic to $\ell_\infty(\aleph)$; hence
$$
Z' \sim Z'' \oplus \ell_\infty(\aleph)
    \sim Z'' \oplus \ell_\infty(\aleph)\oplus \ell_\infty(\aleph)
    \sim Z' \oplus \ell_\infty(\aleph)
    \sim Z'\oplus Z  \sim X.
$$
So, for each $i=1,2$, we can assume that $Y_i$ is contained in a subspace $Z_i$ isomorphic
to $\ell_\infty(\aleph)$ such that the complement of $Z_i$ in $X$ is isomorphic to $X$.
Therefore, given an isomorphism $T:Y_1\to Y_2$, since the quotients $Z_1/Y_1$ and $Z_2/Y_2$
are not reflexive, we first extend $T$ to an isomorphism $\tau$ from $Z_1$ onto $Z_2$,
which clearly can be extended to an automorphism of $X$.

The extension from $T:Y_1\to Y_2$ to $\tau :Z_1\to Z_2$ can be obtained as in the
proof of part (i) of \cite[Theorem 2.f.12]{lindtzaf}).
First, as in the proof of Theorem \ref{aleph-Thetastar=aleph-Sinfty}, we get projections
$P_i$ on $Z_i$ ($i=1,2$) with $Y_i \subset \ker P_i$ and both $\ker P_i$ and $\ran P_i$
isomorphic to $\ell_\infty(\aleph)$.

Since the space $\ran({\bf 1}_{Z_2}-P_2)$ is injective and contains $Y_2$, there exists
an extension $S_1:Z_1\to \ran({\bf 1}_{Z_2}-P_2)$ of $T$, and similarly there is an
extension $S_2:Z_2\to \ran({\bf 1}_{Z_1}-P_1)$ of $T^{-1}$.
Let $R$ be an isomorphism from $Z_1$ onto $\ran(P_2)$ with $\|R^{-1}\|>1$, and define
$\hat T:Z_1\to Z_2$ by $\hat T= S_1 + R({\bf 1}_{Z_1}-S_2S_1)$.
Since $({\bf 1}_{Z_1}-S_2S_1)|_{Y_1}=0$ it follows that $\hat T$ is an extension of $T$,
and as in the proof of part (i) of \cite[Theorem 2.f.12]{lindtzaf}) we can check that
$\hat T$ is an into isomorphism.

Note that the subspace $\hat T\big(\ran({\bf 1}_{Z_1}-P_1)\big)$ is complemented in
$Z_2$ with complement $W$ isomorphic to $Z_2$.
Let $R_0$ be an isomorphism from $\ran P_1$ onto $W$.
The operator $\tau=R_0P_1 + \hat T({\bf 1}_{Z_1}-P_1)$ is an isomorphism from $Z_1$
onto $Z_2$ extending $T$.
\end{proof}

Further differences between $\aleph$-injectivity and separable injectivity is that
Sobczyk's theorem has no simple counterpart for higher cardinals: indeed, $c_0(\aleph)$
is never $\aleph_2$-injective just because its complemented subspace $c_0$ is not:
recall from \cite{accgm1,castgonz} the existence of the Johnson-Lindenstrauss nontrivial exact sequences
$$
\begin{CD} 0@>>> c_0@>>> {J\!L} @>>> c_0(\aleph_1)@>>>0.\end{CD}
$$
Perhaps the role of $c_0$ could be played by Hasanov's ``filter version'' of $c_0$
(see \cite{hasa}).
Recall that a filter $\mathscr F$ on a set $S$ is called $\aleph$-complete if whenever
$A_i \in \mathscr F$ for all $i\in I$ with  $|I|< \aleph$ then $\bigcap_{i \in I} A_i$
is again in $\mathscr F$.
The space $c_0^\mathscr F(S)$ is the closed linear span in $\ell_\infty(S)$ of the set
$\{x\in \ell_\infty(S) : \lim_{\mathscr F} x = 0 \} $.
Hasanov shows in \cite{hasa} that if $\mathscr F$ is $\aleph$-complete, then
$c_0^\mathscr F(S)$ is at most 2-complemented in any superspace $E$ such that
$\dens (E/c_0^\mathscr F(S))\leq \aleph$.
Thus, it is $(2,\aleph^+)$-injective.

\section{$(1,\aleph)$-injective Banach spaces}\label{1aleph-inj}

The $(1,\aleph)$-injective spaces can be characterized as follows (this result can be
essentially found as the remark after Corollary 2, p. 56, in \cite{lind}; the ``if part''
is due to Aronszajn and Panitchpakdi, see \cite[Theorem 3]{a-p}).

\begin{lema}\label{crit}
A Banach space $E$ is $(1,\aleph)$-injective if and only if every family of less than
$\aleph$ mutually intersecting balls of $E$ has nonempty intersection.
\end{lema}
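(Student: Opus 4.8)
The statement is an equivalence, so the plan is to treat the two implications separately; in both directions the core is the classical Aronszajn--Panitchpakdi one-variable argument, adapted so that the cardinality stays strictly below $\aleph$.

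For the \emph{sufficiency} (ball intersection property $\Rightarrow$ $(1,\aleph)$-injectivity) I would take $X$ with $\dens(X)<\aleph$, a subspace $Y\subseteq X$ and an operator $t\colon Y\to E$ with $\|t\|\le 1$ (rescale; the case $t=0$ is trivial), and run Zorn's lemma on the poset of pairs $(Z,s)$ with $Y\subseteq Z\subseteq X$ a closed subspace, $s|_Y=t$ and $\|s\|\le 1$, ordered by extension. Chains have upper bounds by taking the common value on the union of domains and extending continuously to its closure, so the only real work is the one-dimensional step: to extend $s$ over a further vector $x$ I must find $e=s(x)\in E$ with $\|s(y)+\mu e\|\le\|y+\mu x\|$ for all $y\in Z$, $\mu\in\R$, and homogeneity reduces this to $e\in\overline{B}(s(z),\|x-z\|)$ for every $z\in Z$. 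These balls are mutually intersecting, since $\|s(z)-s(z')\|=\|s(z-z')\|\le\|z-z'\|\le\|x-z\|+\|x-z'\|$. To respect the cardinal bound I would index them over a dense set $D\subseteq Z$ with $|D|\le\dens(Z)\le\dens(X)<\aleph$, invoke the hypothesis to get a common point $e$, and then upgrade the inequality from $D$ to all of $Z$ by continuity. A maximal element of the poset must have domain $X$.

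For the \emph{necessity} I would assume $E$ is $(1,\aleph)$-injective and, given a mutually intersecting family $\{B(e_i,r_i):i\in I\}$ with $|I|<\aleph$, pass to $E_0=\overline{\span}\{e_i:i\in I\}$, which has $\dens(E_0)\le|I|+\aleph_0<\aleph$ (here the uncountability of $\aleph$ is used). The key device is to adjoin a single vector: set $X_0=E_0\oplus\R x_0$ with a norm that embeds $E_0$ isometrically and satisfies $\|x_0-e_i\|\le r_i$. Then $\dens(X_0)=\dens(E_0)<\aleph$, so $(1,\aleph)$-injectivity extends the norm-one inclusion $E_0\hookrightarrow E$ to $T\colon X_0\to E$ with $\|T\|\le 1$, and $e:=T(x_0)$ obeys $\|e-e_i\|=\|T(x_0-e_i)\|\le\|x_0-e_i\|\le r_i$, i.e.\ $e\in\bigcap_iB(e_i,r_i)$.

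The hard part is producing that norm on $X_0$, and this is exactly where mutual intersection enters; I would define it dually by $\|(e,\lambda)\|=\sup\{f(e)+c\lambda:\ f\in B_{E_0^*},\ |c-f(e_i)|\le r_i\ \forall i\}$, which is visibly a finite symmetric seminorm. It restricts to $\|\cdot\|_{E_0}$ precisely because, for each $f\in B_{E_0^*}$, the real intervals $[f(e_i)-r_i,\,f(e_i)+r_i]$ have a common point: by the one-dimensional Helly property they meet iff they meet pairwise, and $|f(e_i)-f(e_j)|\le\|e_i-e_j\|\le r_i+r_j$ guarantees pairwise meeting; hence every $f$ with $\|f\|\le1$ admits an admissible $c$, giving $\|(e,0)\|=\|e\|_{E_0}$, while $\|(-e_j,1)\|\le r_j$ is immediate from the constraint. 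I expect the main remaining obstacle to be bookkeeping around degeneracy: if this seminorm fails to be definite (for instance if $x_0$ collapses into $E_0$ or to the origin) one checks directly that a point of $\bigcap_iB(e_i,r_i)$ already lives in $E_0$, so no superspace is needed; otherwise $X_0$ is a genuine Banach space, being a one-codimensional and hence closed, complete extension of $E_0$, and the previous paragraph applies. The second point to keep honest is that passing to $E_0$ rather than to all of $E$ is essential, since $\dens(E)$ may exceed $\aleph$ whereas $(1,\aleph)$-injectivity only extends operators over superspaces of density below $\aleph$.
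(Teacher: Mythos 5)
Your proof is correct, and the sufficiency direction is essentially the paper's own argument: Zorn's lemma reduces everything to the one-dimensional extension step, where the balls $\overline{B}(s(z),\|x-z\|)$ indexed by a dense subset of cardinality $<\aleph$ are mutually intersecting and a common point serves as $s(x)$. Where you genuinely diverge is in the necessity. Both you and the paper reduce the problem to the same device -- build a superspace $E_0\oplus\R x_0$ of density $<\aleph$ in which $\|x_0-e_i\|\le r_i$ for all $i$, extend the inclusion $E_0\hookrightarrow E$ with norm one, and take $T(x_0)$ -- but you construct that superspace by hand, via the dual formula $\|(e,\lambda)\|=\sup\{f(e)+c\lambda\}$ over admissible pairs $(f,c)$, using the one-dimensional Helly property to show each $f\in B_{E_0^*}$ admits a $c$ with $|c-f(e_i)|\le r_i$. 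The paper instead embeds $E_0$ isometrically into $\ell_\infty(\aleph)$, observes that there mutually intersecting balls (equivalently, centers at distance at most the sum of radii) always have a common point $x$, and takes $X_0=\span(\jmath(E_0)\cup\{x\})$. The paper's route is shorter because the hyperconvexity of $\ell_\infty(\Gamma)$ absorbs all the work of producing the extra point; yours is more self-contained and makes visible exactly where the pairwise-intersection hypothesis enters (through Helly in dimension one), at the cost of the bookkeeping you correctly flag -- verifying that the sup is a finite symmetric seminorm restricting to the norm of $E_0$, and handling the degenerate case where $x_0$ collapses into $E_0$, in which case a common point already lies in $E_0$. Both arguments are sound.
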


\begin{proof}
{\sc Sufficiency.} Take an operator $t:Y\to E$, where $Y$ is a closed subspace of $X$,
where $\dens X<\aleph$. We may and do assume $\|t\|=1$.
Let $z\in X\backslash Y$ and let $Y_0$ be a dense subset of $Y$ forming a linear space
over the rational numbers with $|Y_0|<\aleph$ and, for each $y\in Y_0$, consider the
ball $B(ty,\|y-z\|)$ in $E$.
Any two of these balls intersect, since for $y_1,y_2\in Y_0$ we have
$$
\|ty_2-ty_1\|\leq \|t\|\|y_2-y_1\|\leq \|y_2-z\|+\|y_1-z\|.
$$
The hypothesis is that there is
$$
f\in\bigcap_{y\in Y_0} B(ty,\|y-z\|)=\bigcap_{y\in Y} B(ty,\|y-z\|).
$$
It is clear that the map $T:Y+\langle z\rangle \to E$ defined by $T(y+cz)=ty+cf$ is
an extension of $t$ with $\|T\|=1$.
The rest is clear: use Zorn lemma.\\

{\sc Necessity.} We begin with the observation that if two closed balls of a Banach
space have a common point, then the distance between the centers is at most the sum
of the radii.
In $\ell_\infty(\aleph)$ that necessary condition is sufficient.
On the other hand, in $\ell_\infty(\aleph)$, every family of mutually intersecting
balls has nonempty intersection ---this is trivial.
So, let $E$ be $(1,\aleph)$-injective and suppose $B(e_i,r_i)$ is a family of less
than $\aleph$ mutually intersecting balls in $E$.
Let $Y$ be the closed subspace of $E$ spanned by the centers, so that $\dens Y<\aleph$.
Let $\jmath: Y\to\ell_\infty(\aleph)$ be any isometric embedding.
Notice that even if $B_Y(e_i,r_i)=B(e_i,r_i)\cap Y$ need not be mutually intersecting
in $Y$, any two balls of the family $B(\jmath(e_i), r_i)$ meet in $\ell_\infty(\aleph)$
because the distance between the centers does not exceed the sum of the radii.
Therefore the intersection
$$
\bigcap_i B(\jmath(e_i), r_i)
$$
contains some point, say $x\in\ell_\infty(\aleph)$.
Let $X$ be the subspace spanned by $x$ and $\jmath(Y)$ in $\ell_\infty(\aleph)$.
The hypothesis on $E$ allows one to extend the inclusion $\imath: Y\to E$ through
$\jmath: Y\to X$ to an operator $I:X\to E$ without increasing the norm; i.e.,
$I\jmath =\imath$.
Since $\|I(x) - e_i\| = \|I(x- \jmath e_i)\| \leq \|x- \jmath e_i\|$ one gets
$I(x)\in \bigcap_iB(e_i,r_i)$.
\end{proof}

Proposition 6.2 of \cite{accgm1} asserts that, under {\sf CH}, $1$-separably injective
spaces are universally $1$-separably  injective.
This admits a higher cardinal counterpart, which stems from remark 6, p. 223,
in \cite{lindpams}.

\begin{prop}\label{1-aleph-GCH=univ}
Under {\rm {\sf GCH}}, every $(1, \aleph)$-injective Banach space is universally
$(1,\aleph)$-injective.
\end{prop}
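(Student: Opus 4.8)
The plan is to reduce the whole statement to a sequence of one-dimensional extension problems and to control each of them by the ball-intersection criterion of Lemma~\ref{crit}. So let $E$ be $(1,\aleph)$-injective; by Lemma~\ref{crit} every family of fewer than $\aleph$ mutually intersecting balls of $E$ has nonempty intersection. Fix a contraction $t\colon Y\to E$ with $Y$ a subspace of an \emph{arbitrary} Banach space $X$ and $\dens(Y)<\aleph$; I must produce a contraction $T\colon X\to E$ extending $t$. As in the sufficiency part of Lemma~\ref{crit}, extending a contraction $S\colon W\to E$ to $W+\langle z\rangle$ is exactly the problem of choosing a point of $\bigcap_{y\in W}B(Sy,\|y-z\|)$, and these balls are automatically mutually intersecting since $\|Sy_1-Sy_2\|\le\|y_1-y_2\|\le\|y_1-z\|+\|y_2-z\|$. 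I would prove the statement by induction on the cardinal $\mu=\dens(X)$.

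The case $\mu\le\aleph$ is the engine of the argument and needs no set theory beyond Lemma~\ref{crit}. Choosing a dense set $\{x_\xi:\xi<\mu\}$ of $X$, I build an increasing continuous chain of contractions extending $t$, with domains $W_\xi=\overline{Y+\span\{x_\eta:\eta<\xi\}}$. At a successor step $\dens(W_\xi)<\aleph$, because $|\xi|<\aleph$ and $\dens(Y)<\aleph$; hence, passing to a dense rational subspace of $W_\xi$ of cardinality $<\aleph$ exactly as in Lemma~\ref{crit}, the relevant family of mutually intersecting balls has fewer than $\aleph$ members and a common point, which serves as the value $T_{\xi+1}(x_\xi)$. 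At limit stages I extend by uniform continuity, the norm staying $\le1$. The key point is that when $\mu\le\aleph$ \emph{every} intermediate domain $W_\xi$ has density $<\aleph$, and $X$ itself is reached only in the final continuity step, so a ball family of size $\aleph$ is never called for. This already shows that a contraction from a density-$<\aleph$ subspace extends to any super-space of density at most $\aleph$.

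The whole difficulty lies in the inductive step $\mu>\aleph$, and this is where {\sf GCH} enters. Rather than exhaust $X$ by a single chain, I would manufacture a \emph{coherent} family of extensions indexed by the directed set $\mathcal D$ of all density-$<\aleph$ subspaces of $X$ that contain $Y$: an assignment $D\mapsto t_D$ of contractions $t_D\colon D\to E$ extending $t$ and agreeing on intersections, so that the induced colimit on the dense subspace $\bigcup\mathcal D=X$ is the desired $T$. Under {\sf GCH} one has $|\mathcal D|=\mu^{<\aleph}=\mu$ whenever $\operatorname{cf}(\mu)\ge\aleph$, so $\mathcal D$ can be enumerated in a recursion of length exactly $\mu$; the singular case $\operatorname{cf}(\mu)<\aleph$ is reduced to smaller densities by running a short cofinal chain and invoking the induction hypothesis. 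Crucially, \emph{each individual} $t_D$ is produced on a domain of density $<\aleph$, so it is obtained by a single appeal to $(1,\aleph)$-injectivity applied to the already forced values on $D$; {\sf GCH} is used only to keep the cardinal arithmetic $\mu^{<\aleph}=\mu$ in force so that the recursion closes up at length $\mu$, following remark~6, p.~223 of \cite{lindpams} and the higher-cardinal analogue of Proposition~6.2 in \cite{accgm1}.

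I expect the main obstacle to be maintaining the \emph{joint} coherence of this family as the recursion proceeds. Pairwise agreement on intersections is not enough: to define $t_{D_\alpha}$ one must glue the restrictions of all previously chosen $t_{D_\beta}$ to $D_\alpha$, and one has to guarantee that this forced partial datum is again a \emph{contraction} before extending it to $D_\alpha$. Tracking this amounts to carrying along a single contraction on the growing subspace $\overline{\sum_{\beta<\alpha}D_\beta}$, whose density eventually exceeds $\aleph$; extending such a partial contraction by even one vector would, taken naively, require a common point of $\ge\aleph$ mutually intersecting balls, which $(1,\aleph)$-injectivity does \emph{not} provide—this is precisely the gap between $(1,\aleph)$- and $(1,\aleph^+)$-injectivity. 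The delicate heart of the proof is therefore the combinatorial scheduling of the enumeration of $\mathcal D$, arranged so that the coherence demands arising at limit stages of uncountable cofinality are always met by choices already made on density-$<\aleph$ pieces; it is exactly this bookkeeping that forces the use of {\sf GCH}.
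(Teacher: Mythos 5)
Your first stage---the case $\dens(X)\le\aleph$, handled by a transfinite chain of one-vector extensions, each of which only asks for the intersection of fewer than $\aleph$ mutually intersecting balls---is correct, and it is essentially the engine of the paper's proof as well. The gap is in the stage $\dens(X)>\aleph$. You propose a recursion over the directed family $\mathcal D$ of density-$<\aleph$ subspaces of $X$ containing $Y$, and then you yourself identify the fatal obstruction: gluing the previously chosen $t_{D_\beta}$ forces a partial contraction on a subspace whose density eventually reaches $\aleph$, and extending that by a single vector requires a common point of at least $\aleph$ mutually intersecting balls, which $(1,\aleph)$-injectivity does not supply. You do not resolve this; the closing appeal to ``combinatorial scheduling of the enumeration'' is a statement of hope rather than an argument, and it is not clear any scheduling can help, since a coherent system of contractions on a cofinal subfamily of $\mathcal D$ is the same datum as a single contraction on a dense subspace of $X$, so at some stage the accumulated constraints live on a set of density $\ge\aleph$. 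The cardinal arithmetic $\mu^{<\aleph}=\mu$ is not where {\sf GCH} is actually needed.

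The missing idea is to factor through an injective space instead of recursing inside $X$. Put $\kappa=\dens(Y)<\aleph$ and let $j:Y\to\ell_\infty(\kappa)$ be an isometric embedding. Under {\sf GCH} one has $\dens\big(\ell_\infty(\kappa)\big)=2^{\kappa}=\kappa^{+}\le\aleph$, so your first stage, applied to $j(Y)\subset\ell_\infty(\kappa)$, already extends $t\circ j^{-1}$ to a norm-one operator $T:\ell_\infty(\kappa)\to E$: every proper initial segment of the chain exhausting $\ell_\infty(\kappa)$ has density strictly below $\aleph$. Since $\ell_\infty(\kappa)$ is $1$-injective, the embedding $j$ extends to a norm-one operator $J:X\to\ell_\infty(\kappa)$ for the \emph{arbitrary} superspace $X$, and $TJ$ is the required norm-preserving extension of $t$. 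This is exactly how {\sf GCH} enters the paper's proof---it caps the density of the ambient injective space at $\aleph$---and it lets you delete the large-$X$ recursion entirely.
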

\begin{proof}
Let $E$ be an $(1, \aleph)$-injective Banach space and let $Y$ be a density character
$\aleph$ subspace of a space $X$ and let $t:Y\to E$ be an operator.
Let  $j: Y\to \ell_\infty(\aleph)$ be an isometric embedding; and observe that,
under {\sf GCH}, the space $\ell_\infty(\aleph)$ has density character $\aleph^+$.
Since a set of cardinal $\aleph^+$ can be written as the union of an increasing chain
of sets of cardinal $\aleph$, write $\ell_\infty(\aleph)$ as the union of an increasing
chain of subspaces with density character $\aleph$.
There is no loss of generality in assuming that the first set of the chain is $Y$.
Use transfinite induction and the $(1, \aleph)$-injectivity of $E$ to extend $t$ to
an operator $T: \ell_\infty(\aleph) \to E$ with the same norm
(see \cite[Lemma 6.1]{accgm1} for details).
Extend $j$ to an operator $J:X\to \ell_\infty(\aleph)$ with the same norm.
The composition $TJ$ is the desired equal norm extension of $t$.
\end{proof}

\section{Spaces of continuous functions}\label{C(K)-spaces}

The following omnibus result summarizes what is known about the
interplay between the $(1,\aleph)$-injectivity of $C(K)$, the
topological properties of $K$ and the lattice structure of $C(K)$.
We will need a simple Lemma which can be found in \cite[lemma 6.4]{accgm1}.

\begin{lema}\label{positiveextension}
Let $K,L,M$ be compact spaces and let $f:K\To M$ be a continuous
map, with \mbox{$\jmath=f^{\circ}:C(M)\To C(K)$} its induced
operator, and let $\imath: C(M)\To C(L)$ be a positive norm one
operator. Suppose that $S:C(L)\To C(K)$ is an operator with
$\|S\|=1$ and $S\imath = \jmath$. Then $S$ is a positive operator.
\end{lema}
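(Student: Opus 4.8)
The plan is to test positivity of $S$ pointwise by means of its representing measures and to pin those measures down using the single test function $1_M$. Concretely, for each $x\in K$ the functional $g\mapsto (Sg)(x)$ on $C(L)$ has norm at most $\|S\|=1$, so by the Riesz representation theorem it is given by a signed Radon measure $\mu_x$ on $L$ with total variation $\|\mu_x\|\le 1$. Since $(Sg)(x)=\int_L g\,d\mu_x$, the operator $S$ is positive precisely when every $\mu_x$ is a positive measure; this is the statement I would aim to prove.

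Next I would exploit the commutativity $S\imath=\jmath$ evaluated on the constant function $1_M$. On the one hand $\jmath=f^{\circ}$ is the composition operator, so $\jmath(1_M)=1_M\circ f=1_K$. On the other hand, writing $h:=\imath(1_M)$, positivity of $\imath$ gives $h\ge 0$, and because for a positive operator between $C(\cdot)$-spaces the norm is attained at the unit, $\|h\|_\infty=\|\imath\|=1$; hence $0\le h\le 1_L$ pointwise. Evaluating $S\imath(1_M)=\jmath(1_M)$ at $x$ then yields the key identity
$$
\int_L h\,d\mu_x=(S\imath\,1_M)(x)=(\jmath\,1_M)(x)=1.
$$

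Finally I would conclude by a squeezing argument on the Jordan decomposition $\mu_x=\mu_x^+-\mu_x^-$. Using $0\le h\le 1$ one has $\int_L h\,d\mu_x^+\le\mu_x^+(L)$ and $\int_L h\,d\mu_x^-\ge 0$, so
$$
1=\int_L h\,d\mu_x\le \mu_x^+(L)\le \mu_x^+(L)+\mu_x^-(L)=\|\mu_x\|\le 1.
$$
All inequalities are therefore equalities; in particular $\mu_x^-(L)=0$, so $\mu_x=\mu_x^+\ge 0$. Since $x\in K$ was arbitrary, every $\mu_x$ is positive and thus $S$ is a positive operator. I do not expect a genuine obstacle here: the proof is essentially forced once one introduces the representing measures, and the only facts used are the Riesz representation theorem, the equality $\|\imath\|=\|\imath(1_M)\|$ valid for positive operators, and the elementary comparison above. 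The one point to state carefully is that $\|\imath\|=1$ together with positivity really does trap $h$ between $0$ and $1$, which is what makes the final squeeze work.
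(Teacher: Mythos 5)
Your argument is correct: reducing to the representing measures $\mu_x$ of the functionals $g\mapsto (Sg)(x)$, observing that $\int_L \imath(1_M)\,d\mu_x=(\jmath 1_M)(x)=1$ with $0\le \imath(1_M)\le 1$ and $\|\mu_x\|\le 1$, and squeezing the Jordan decomposition is exactly the standard way to prove this. The paper itself gives no proof here --- it only cites \cite[Lemma 6.4]{accgm1} --- and your argument coincides with the one used there, so there is nothing further to compare.
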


Recall that a cozero set in a topological space $K$ is an open set of the
form $\{x\in K: f(x)\neq 0\}$ for some $f\in C(K)$. One has:

\begin{teor}\label{people} For a compact space $K$ and a
cardinal number $\aleph$, the following  statements are
equivalent:
\begin{itemize}
\item[(a)] $C(K)$ is $(1,\aleph)$-injective. \item[(b)] Given
subsets $L$ and $U$ of $C(K)$ with $|L|,|U|<\aleph$ such that
$f\leq g$ for every $f\in L$ and $g\in U$, there exists $h\in
C(K)$ separating them, that is, such that  $f\leq h\leq g$ for all
$f\in L$ and $g\in U$. \item[(c)] Every family of mutually
intersecting balls in $C(K)$ of cardinal less than $\aleph$ has
nonempty intersection. \item[(d)] Every couple of disjoint open
sets $G$ and $H$ of $K$ which are the union of less than $\aleph$
many closed sets have disjoint closures. \item[(e)] Every couple
of disjoint open sets $G$ and $H$ of $K$ which are the union of
less than $\aleph$ many cozero sets have disjoint closures.
\end{itemize}
\end{teor}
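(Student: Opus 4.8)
The plan is to run the cycle $(\mathrm a)\Leftrightarrow(\mathrm c)$, $(\mathrm b)\Leftrightarrow(\mathrm c)$, and $(\mathrm b)\Rightarrow(\mathrm d)\Rightarrow(\mathrm e)\Rightarrow(\mathrm b)$. The equivalence $(\mathrm a)\Leftrightarrow(\mathrm c)$ is nothing but Lemma~\ref{crit} applied to $E=C(K)$, so there is no work to do there, and it reduces the whole theorem to relating the order-separation condition $(\mathrm b)$ to the ball condition $(\mathrm c)$ and to the topological conditions $(\mathrm d)$, $(\mathrm e)$. Throughout I assume $L,U\neq\emptyset$, which is the substantive case (and the one corresponding to a genuine family of balls).

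For $(\mathrm b)\Leftrightarrow(\mathrm c)$ I would use that a ball of $C(K)$ is an order interval, $B(f,r)=\{h\in C(K):f-r\le h\le f+r\}$ (with $r$ the constant function), and that two such balls meet exactly when $\|f_i-f_j\|\le r_i+r_j$ (the nontrivial direction coming from the weighted average of the centers). For $(\mathrm b)\Rightarrow(\mathrm c)$, given mutually intersecting balls $B(f_i,r_i)$ I set $L=\{f_i-r_i\}$ and $U=\{f_i+r_i\}$; the intersection condition $\|f_i-f_j\|\le r_i+r_j$ is precisely $f_i-r_i\le f_j+r_j$ for all $i,j$, so $(\mathrm b)$ yields $h$ with $f_i-r_i\le h\le f_i+r_i$, i.e.\ $h\in\bigcap_iB(f_i,r_i)$. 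For $(\mathrm c)\Rightarrow(\mathrm b)$ I first clamp each member of $L$ from below by a fixed $f_0\in L$ and each member of $U$ from above by a fixed $g_0\in U$; this produces uniformly bounded families (by some $M$) with the same hypothesis $f\le g$ and the same separators, and then I encode the one-sided requirements as the balls $B(f+R,R)$ for $f\in L$ and $B(g-R,R)$ for $g\in U$ with $R\ge M$. Using $f\le g$ one checks all these balls are mutually intersecting, and a common point is exactly a separating $h$.

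For $(\mathrm b)\Rightarrow(\mathrm d)$, write the disjoint open sets as $G=\bigcup_\alpha C_\alpha$ and $H=\bigcup_\beta D_\beta$ with $C_\alpha,D_\beta$ closed and fewer than $\aleph$ of each. Normality of $K$ gives Urysohn functions $\phi_\alpha$ (equal to $1$ on $C_\alpha$ and to $0$ off $G$) and $\psi_\beta$ (equal to $1$ on $D_\beta$ and to $0$ off $H$); since $G\cap H=\emptyset$ one has $\phi_\alpha+\psi_\beta\le 1$, so $L=\{\phi_\alpha\}$ and $U=\{1-\psi_\beta\}$ satisfy the hypothesis of $(\mathrm b)$. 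The resulting $h$ obeys $h\ge\sup_\alpha\phi_\alpha=\mathbf{1}_G$ and $h\le\inf_\beta(1-\psi_\beta)=1-\mathbf{1}_H$, whence $\overline G\subseteq\{h\ge1\}$ and $\overline H\subseteq\{h\le0\}$ are disjoint. The implication $(\mathrm d)\Rightarrow(\mathrm e)$ is immediate once one notes that a cozero set is a countable union of closed sets, so, since $\aleph>\aleph_0$, a union of fewer than $\aleph$ cozero sets is a union of fewer than $\aleph$ closed sets.

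The crux is $(\mathrm e)\Rightarrow(\mathrm b)$, which is a restricted insertion theorem and is where I expect the main difficulty. Put $\ell=\sup L$ (lower semicontinuous) and $u=\inf U$ (upper semicontinuous); they are bounded and satisfy $\ell\le u$. For all reals $r<r'$ the level sets $\{\ell>r'\}=\bigcup_{f\in L}\{f>r'\}$ and $\{u<r\}=\bigcup_{g\in U}\{g<r\}$ are disjoint (a point with $\ell>r'>r>u$ would contradict $\ell\le u$) and are unions of fewer than $\aleph$ cozero sets, so $(\mathrm e)$ supplies the strong separation $\overline{\{\ell>r'\}}\cap\overline{\{u<r\}}=\emptyset$ at every pair of rational levels. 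With this in hand, a dyadic interpolation exactly as in Urysohn's lemma (equivalently, in the Kat\v{e}tov--Tong insertion theorem) builds a decreasing family of open sets $V_r$ with $\{\ell>r\}\subseteq V_r\subseteq\{u\ge r\}$ and $\overline{V_{r'}}\subseteq V_r$ for $r<r'$; then $h(x)=\sup\{r:x\in V_r\}$ is continuous with $\ell\le h\le u$, i.e.\ $h$ separates $L$ from $U$. The delicate point is verifying that the standard interpolation survives when the usual disjoint closed sets are replaced by the nonconstant, merely semicontinuous envelopes $\ell,u$; the disjointness of closures furnished by $(\mathrm e)$ is exactly what makes the scheme close. (Lemma~\ref{positiveextension} offers an alternative, more lattice-theoretic route to $(\mathrm a)\Rightarrow(\mathrm b)$ through isometric embeddings into $\ell_\infty(\aleph)$, but the argument above bypasses it.)
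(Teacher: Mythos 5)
Your proposal is correct, but it organizes the equivalences in a genuinely different way from the paper at two points. First, the paper runs the cycle $(a)\Rightarrow(b)\Rightarrow(c)\Rightarrow(a)$, and its $(a)\Rightarrow(b)$ goes through the Gelfand representation: it places $L$, $U$ and a separating $\eta\in\ell_\infty(K)$ inside a closed subalgebra $A=C(L)$ of $\ell_\infty(K)$ with $\dens A<\aleph$, extends the inclusion $B=A\cap C(K)\hookrightarrow C(K)$ to a norm-one operator $I:A\to C(K)$, and invokes Lemma~\ref{positiveextension} to see that $I$ is positive, so that $I\eta$ separates. You instead take $(a)\Leftrightarrow(c)$ wholesale from Lemma~\ref{crit} and close the loop with a direct $(c)\Rightarrow(b)$, encoding the one-sided constraints as the order intervals $B(f+R,R)$ and $B(g-R,R)$ after clamping to get uniform boundedness; your verification that these balls are mutually intersecting is correct, and this route is more elementary, bypassing the C*-algebra machinery and Lemma~\ref{positiveextension} entirely. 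Second, for $(e)\Rightarrow(b)$ the paper first proves a Claim giving separation up to $\varepsilon$ (by summing $\tfrac12\sum_n h_n$ over integer level sets $G_n$, $H_n$) and then removes the $\varepsilon$ by an iteration producing a uniformly convergent sequence; you propose a one-shot Urysohn/Kat\v etov--Tong dyadic interpolation between the envelopes $\ell=\sup L$ and $u=\inf U$. Your scheme does close: condition $(e)$ applied to $\{\ell>r\}$ and $\{u<r\}$ (disjoint, and unions of fewer than $\aleph$ cozero sets, even at equal levels $r=r'$) yields $\overline{\{\ell>r\}}\cap\overline{\{u<r\}}=\varnothing$, and the induction on rational levels goes through provided you carry the slightly stronger invariant $\overline{V_q}\cap\overline{\{u<q\}}=\varnothing$ (so that $\overline{V_{q'}}\subseteq\operatorname{int}\{u\ge q\}$ whenever $q<q'$); normality of $K$ then supplies each new $V_{q_n}$ separating the closed set $\overline{\{\ell>q_n\}}\cup\bigcup_{q_j>q_n}\overline{V_{q_j}}$ from $\overline{\{u<q_n\}}\cup\bigcup_{q_i<q_n}(K\setminus V_{q_i})$. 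The remaining implications $(b)\Rightarrow(c)$, $(b)\Rightarrow(d)$ and $(d)\Rightarrow(e)$ coincide with the paper's arguments. What your version buys is a shorter, purely order-theoretic path around $(a)$--$(c)$; what the paper's version buys is an explicit, finitary construction in $(e)\Rightarrow(b)$ that avoids transfinite bookkeeping over all rational levels.
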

\begin{proof}
We first prove the implications $(a)\Rightarrow (b) \Rightarrow
(c)\Rightarrow (a)$ in that order. Let $L$ and $U$ be as in (b).
We  consider $C(K)$ as a subalgebra of $\ell_\infty(K)$. Let
$\eta\in\ell_\infty(K)$ such that $f\leq\eta\leq g$ for all $f\in
L$ and $g\in U$. Let $A$ be the least unital closed subalgebra of
$\ell_\infty(K)$ containing $L, U$ and $\eta$, and let $B=A\cap
C(K)$. Clearly, $\dens A< \aleph$. By (a), the inclusion of $B$
into $C(K)$ extends to a norm-one operator $I:A\to C(K)$. Let $L$
be the maximal ideal space of $A$ and $M$ that of $B$. By general
representation theorems we have $A=C(L), B=C(M)$ (see for
instance \cite[Theorem 4.2.5]{a-k}) and a commutative diagram
$$
\setlength{\unitlength}{0.1mm}
\begin{picture}(1300,350)(-160,-100)
\linethickness{0.4pt}

\put(330,130){\vector(1,-1){150}}
\put(740,130){\vector (-1,-1){150}}
\put(400,160){\vector(1,0){340}}


\put(250,150){$C(M)$}

\put(780,150){$C(L)$}

\put(480,-80){$C(K)$}


\put(550,170){{\footnotesize $\imath$}}

\put(360,25){\footnotesize{ $\jmath$}} \put(700,25){\footnotesize{$I$}}

\end{picture}
$$

By Lemma \ref{positiveextension}, $I$ is a positive operator,
hence $I\eta$ separates $L$ from $U$.

We check now $(b) \Rightarrow (c)$. Let $(B_i)_{i\in I}$ be a
family of mutually intersecting balls, where $|I|< \aleph$.
Writing $B_i=B(f_i,r_i)$, we have $ \|f_i-f_j\|\leq r_i+r_j$ for
all $i,j\in I$, that is,
$$
f_i-r_i\leq f_j+r_j\quad\quad(i,j\in I).
$$
By (b) there is $h\in C(K)$ such that
$$
f_i-r_i\leq h\leq f_j+r_j\quad\quad(i,j\in I).
$$
In particular $f_i-r_i\leq h\leq f_i+r_i$, that is, $h\in\bigcap_i
B_i$. The implication $(c) \Rightarrow (a)$ is contained in
Lemma~\ref{crit}.\\

We pass to the string $(b)\Rightarrow (d)\Rightarrow
(e)\Rightarrow (b)$. Assume that $(b)$ holds and let $G$ and $H$
be as in $(d)$, so that $G = \bigcup_{\alpha\in I}C_\alpha$ and
$H=\bigcup_{\alpha\in I}D_\alpha$, where $C_\alpha$ and $D_\alpha$
are closed subsets of $K$ and $|I|<\aleph$. For every
$\alpha \in I$, let $f_\alpha\in C(K)$, $0\leq f_\alpha\leq 1$,
such that $f_\alpha|_{K\setminus G}=0$ and
$f_\alpha|_{C_\alpha}=1$, and let $g_\alpha\in C(K)$, $0\leq
g_\alpha\leq 1$, such that $g_\alpha|_{K\setminus H}=1$ and
$g_\alpha|_{D_\alpha}=0$. The sets $L=\{f_\alpha : \alpha\in I\}$
and $U= \{g_\alpha : \alpha\in I\}$ satisfy the assumptions of
condition $(b)$. The function $h\in C(K)$ that separates $L$ and
$U$ has the property that $h|_G=1$ and $h|_H=0$, hence
$\overline{G}\cap\overline{H}=\varnothing$. That $(d)$ implies
$(e)$ is a consequence of the fact that each cozero set is the
union of countably many closed sets, namely for $f\in C(K)$,
$$
\{x\in K : f(x)\neq 0\} = \bigcup_{n\in\mathbb{N}}\{x\in K :
|f(x)|\geq 1/n\}.
$$
Assume now that $(e)$ holds. As a first step towards $(b)$, we
prove it modulo a given positive $\varepsilon$.\\

{\sc Claim.} Given $U$ and $L$ like in $(b)$ and given
$\varepsilon>0$, there exists $h\in C(K)$ such that
$f-\varepsilon\leq h \leq g+\varepsilon$ for every $f\in L$ and
$g\in U$.\\

{\sc Proof of the
claim.} By homogeneity, it is enough to consider the case
$\varepsilon=1$. Let $N\in\mathbb{N}$ be such that $-N<f_0\leq
g_0<N$ for some $f_0\in L$ and $g_0\in U$. Let
$I=\{n\in\mathbb{N} : -N<n<N\}$. For every $n\in I$, let
\begin{eqnarray*}
G_n &=& \left\{x\in K : f(x)>n \text{ for some } f\in L \right\} = \bigcup_{f\in L} f^{-1}(n,+\infty),\\
H_n &=& \left\{x\in K : g(x)<n \text{ for some } g\in U \right\} = \bigcup_{g\in U} g^{-1}(-\infty, n).
\end{eqnarray*}
 For each $n$,
$G_n$ and $H_n$ are disjoint open sets which are the union of less
than $\aleph$ cozero sets, because $|L|,|U|<\aleph$ and $f^{-1}(n,
+\infty)$ is itself a cozero set (it is the complement of the zero
set of $\max\{f-n,0\}$). Hence
$\overline{G_n}\cap\overline{H_n}=\varnothing$, therefore there
exists $h_n\in C(K)$, $-1 \leq h_n\leq 1$ such that $h_n|_{G_n} =
1$ and $h_n|_{H_n} = -1$. We shall check that
$h=\frac{1}{2}\sum_{n\in I}h_n\in C(K)$ is the desired function.
For $f\in L$ and $x\in K$,
\begin{eqnarray*}
h(x) &=& \frac{1}{2}  \sum_{n\in I}h_n(x)  = \frac{1}{2} \left(
\sum_{n\in I, n<f(x)}(1) + \sum_{n\in I, n\geq
f(x)}h_n(x)\right)\\
&\geq& \frac{|\{n\in I, n<f(x)\}|-
|\{n\in I, n\geq f(x)\}|}{2}\\ &\geq& f(x)-1.
\end{eqnarray*}
Similarly, one gets that $h(x)\leq g(x) +1$ for all $g\in U$ and
$x\in K$. {\sc End of the proof of the claim.}\\

Now, if $U$ and $L$ are sets like in $(b)$ we construct
inductively a sequence of new sets $U_n,L_n\subset C(K)$
and functions $h_n\in C(K)$ as follows: $L_0=L$, $U_0=U$;
$h_n\in C(K)$ is such that $f-2^{-n}\leq h_n\leq g+2^{-n}$
for all $f\in L_n$, $g\in U_n$;
$L_{n+1} = L_n\cup\{h_n-2^{-n}\}$,
$U_{n+1} = U_n\cup \{h_n+2^{-n}\}$.
This can be performed because of the preceding claim.
Notice that the sequence $(h_n)_{n\in\mathbb{N}}$ is
uniformly convergent because for $m<n$, $h_{m}-2^{-m}\in L_n$,
$h_{m}+2^{-m}\in U_n$, hence
$$
h_{m}-2^{-m}-2^{-n}\leq h_n\leq h_m + 2^{-m} +2^{-n}
\Rightarrow  \|h_n - h_m\| \leq 2^{-m+1}.
$$
We can consider thus $h=\lim_n h_n$.
This function belongs to $C(K)$ and  satisfies $f\leq h \leq g$ for $f\in L$ and $g\in U$.
\end{proof}

\begin{remar}
The preceding Theorem summarizes or generalizes many earlier results.
The equivalence between (a), (d) and (e) can be traced back to \cite[Theorem 2]{a-p}
although Aronszajn and Panitchpakdi manage a condition intermediate between (d) and (e);
see also Henriksen's note \cite{henriksen}.
Neville's \cite[Theorem 2]{neville} is clearly related to the equivalence between (a) and (b).
The equivalence between (b) and (e) when $\aleph=\aleph_1$ if due to Seever \cite[Theorem 2.5]{seever}.
\end{remar}

As a consequence of Theorem~\ref{people} we get (cf. \cite[Section 4.3]{accgm1}):

\begin{prop}
The space $C(\N^*) = \ell_\infty/c_0$ is $(1,\aleph_1)$-injective.
\end{prop}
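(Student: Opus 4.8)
The plan is to deduce the statement from Theorem~\ref{people} applied to $K=\N^*$ and $\aleph=\aleph_1$, which reduces it to a purely topological property of $\N^*$, and then to verify that property by an explicit computation inside $C(\N^*)=\ell_\infty/c_0$. First I would specialize condition (e) to $\aleph=\aleph_1$. Here ``less than $\aleph_1$ many cozero sets'' means \emph{countably} many, and a countable union of cozero sets is again a cozero set (if $G=\bigcup_n\{f_n\neq 0\}$ with $0\le f_n\le 1$, then $G=\{\sum_n 2^{-n}f_n\neq 0\}$). Hence condition (e) collapses to the assertion that \emph{any two disjoint cozero sets of $\N^*$ have disjoint closures}, i.e.\ that $\N^*$ is an $F$-space. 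This is a classical fact, but I would give a direct argument in the sequence model.

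Recall that $C(\N^*)=\ell_\infty/c_0$, with the point evaluation of a class $[w]$ at $p\in\N^*$ given by the ultrafilter limit $\lim_p w$; and that a bounded sequence $a$ satisfies $\lim_p a=0$ for every $p\in\N^*$ precisely when $a\in c_0$. Given disjoint cozero sets $G,H$, I would represent them as $G=\{[u]\neq 0\}$ and $H=\{[v]\neq 0\}$ with $u,v\in\ell_\infty$, $u,v\ge 0$ (replacing lifts by their pointwise absolute values, which is legitimate since $\ell_\infty\to\ell_\infty/c_0$ is a lattice quotient map). The disjointness $G\cap H=\varnothing$ then translates into the condition $\min(u_n,v_n)\to 0$: indeed $p$ lies in $G\cap H$ iff $\lim_p u>0$ and $\lim_p v>0$, i.e.\ iff $\lim_p\min(u_n,v_n)>0$, so no such $p$ exists exactly when $\min(u_n,v_n)\in c_0$.

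Finally I would exhibit a separating function by setting $w_n=u_n/(u_n+v_n)$ when $u_n+v_n>0$ and $w_n=0$ otherwise; clearly $w\in\ell_\infty$ with $0\le w\le 1$. For $p\in G$ one has $\lim_p u=a>0$, and then $\min(u_n,v_n)\to 0$ forces $\lim_p v=0$, so $\lim_p(u_n+v_n)=a>0$ and $\lim_p w=a/a=1$; symmetrically $\lim_p w=0$ for $p\in H$. Thus $h=[w]\in C(\N^*)$ equals $1$ on $G$ and $0$ on $H$; by continuity $h\equiv 1$ on $\overline{G}$ and $h\equiv 0$ on $\overline{H}$, whence $\overline{G}\cap\overline{H}=\varnothing$. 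This verifies (e) and, by Theorem~\ref{people}, shows that $C(\N^*)$ is $(1,\aleph_1)$-injective. I expect the only delicate points to be the bookkeeping in the two reductions---countable unions of cozero sets remain cozero, and disjointness of cozero sets in $\N^*$ amounts to $\min(u_n,v_n)\to 0$---since once the condition on the sequences is in hand, the limit computation for $w$ is immediate.
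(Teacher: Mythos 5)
Your proposal is correct and follows essentially the same route as the paper: the paper also obtains this proposition as an immediate consequence of Theorem~\ref{people}, leaving the underlying fact that $\N^*$ is an $F$-space to a citation. Your explicit verification of that fact in the sequence model of $\ell_\infty/c_0$ --- including the reduction of countable unions of cozero sets to a single cozero set and the computation with $w_n=u_n/(u_n+v_n)$ --- is accurate and simply supplies the detail the paper omits.
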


Recall that $\N^* = \beta \N\setminus \N$. We show now that no
cardinal improvement is possible.

\begin{prop}\label{not1aleph2}
$C(\N^*)$ is not $(1,\aleph_2)$-injective.
\end{prop}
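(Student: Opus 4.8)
**

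The plan is to show that $C(\N^*)=\ell_\infty/c_0$ fails condition (d) of Theorem~\ref{people} at the level $\aleph_2$, by exhibiting two disjoint open sets $G$ and $H$ in $\N^*$, each expressible as a union of fewer than $\aleph_2$ (i.e. at most $\aleph_1$) closed sets, whose closures nevertheless meet. By the equivalence $(a)\Leftrightarrow(d)$ this immediately gives that $C(\N^*)$ is not $(1,\aleph_2)$-injective.

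To construct such sets I would work with a tower (a $\subseteq^*$-increasing, $\subseteq^*$-unbounded chain) of infinite subsets of $\N$ of length $\omega_1$, say $(A_\alpha)_{\alpha<\omega_1}$, which exists in {\sf ZFC}. Passing to $\N^*$, each $A_\alpha$ determines a clopen set $A_\alpha^*=\overline{A_\alpha}\cap\N^*$, and the almost-inclusion $A_\alpha\subseteq^* A_\beta$ for $\alpha<\beta$ becomes genuine inclusion $A_\alpha^*\subseteq A_\beta^*$ in $\N^*$. The idea is to split the tower into two interleaved families along a partition of $\omega_1$ and take $G$ and $H$ to be the two open sets one gets as unions of suitable differences $A_{\alpha+1}^*\setminus A_\alpha^*$; each such difference is clopen (hence closed), there are only $\aleph_1<\aleph_2$ of them, and the gap structure of the tower forces the two closures to overlap at a common limit point while the open sets themselves stay disjoint. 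The precise combinatorics is the delicate part: I must arrange the two families so that they remain pointwise disjoint as open sets yet are ``intertwined'' finely enough that no continuous function can separate them, which is exactly the failure of having disjoint closures.

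Concretely I would index by the even and odd successor ordinals (or by two cofinal disjoint subsets of a fixed limit ordinal of cofinality $\omega_1$), setting
$$
G=\bigcup_{\alpha} \big(A_{2\alpha+2}^*\setminus A_{2\alpha+1}^*\big),\qquad
H=\bigcup_{\alpha} \big(A_{2\alpha+1}^*\setminus A_{2\alpha}^*\big),
$$
so that $G$ and $H$ are disjoint (consecutive annuli of the tower alternate between the two), each is a union of $\aleph_1$ clopen pieces, and any point of $\N^*$ lying ``above'' the whole tower — a point in the intersection of the $A_\alpha^*$-complements' dual, obtained from an ultrafilter refining the tower's cofinal filter — sits in both closures. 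Verifying that $\overline G\cap\overline H\neq\varnothing$ reduces to checking that a single ultrafilter limiting the tower cannot be separated by the clopen annuli, which follows from the unboundedness of the tower together with the fact that a basic clopen set $B^*$ containing such a point must contain infinitely many annuli from each family.

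The main obstacle I expect is the combinatorial bookkeeping guaranteeing disjointness of $G$ and $H$ as open sets while simultaneously forcing the closures to meet: one must rule out the possibility that some continuous $h$ on $\N^*$ (equivalently, some element of $\ell_\infty/c_0$) separates the two families, and this is where the length-$\omega_1$ tower is essential — it is precisely long enough that the separating function would have to change values uncountably often across the gaps, contradicting its continuity (uniform continuity on the compact $\N^*$). Establishing this non-separation rigorously, rather than the mere set-theoretic existence of the tower, is the crux; everything else (counting the closed pieces, translating $\subseteq^*$ into topology) is routine.
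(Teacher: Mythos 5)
Your overall strategy---violating condition (d) (equivalently (b)) of Theorem~\ref{people} for $\aleph=\aleph_2$ by means of an $\omega_1$-indexed family of clopen subsets of $\N^*$---is the right one and is the same as the paper's, but the combinatorial object you base it on is the wrong one. First, a $\subseteq^*$-increasing, $\subseteq^*$-unbounded chain of length $\omega_1$ does \emph{not} exist in {\sf ZFC}: passing to complements, such a chain is precisely an $\omega_1$-tower with no infinite pseudo-intersection, and its existence is equivalent to the tower number $\mathfrak t$ being equal to $\aleph_1$; consistently (e.g.\ under Martin's Axiom together with $\mathfrak c>\aleph_1$) one has $\mathfrak t=\mathfrak c>\aleph_1$ and no such chain exists. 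Since the Proposition is a theorem of {\sf ZFC}, your starting point already fails.

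Second, even in models where such a tower does exist, the step you yourself flag as ``the crux''---that $\overline G\cap\overline H\neq\varnothing$---is not actually established, and the heuristic you offer is unsound: a continuous function on $\N^*$ can perfectly well take different values on uncountably many pairwise disjoint clopen annuli, so there is no ``changes value uncountably often, contradicting continuity'' obstruction. Unboundedness of the tower does not rule out a separating clopen set either: each annulus of one family meets each annulus of the other in a finite set, and the question of whether those $\omega_1$ many finite errors can be absorbed into a single interpolating set is exactly the point at issue, not something your bookkeeping controls. The object that delivers non-separation \emph{in} {\sf ZFC} is a Hausdorff $(\omega_1,\omega_1)$-gap: two sequences of clopen sets $(a_i)$ increasing and $(b_j)$ decreasing with $a_i\subseteq b_j$ for all $i,j$, such that \emph{no} clopen $c$ satisfies $a_i\subseteq c\subseteq b_j$ for all $i,j$. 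Here the non-interpolation is built into Hausdorff's construction rather than deduced from unboundedness, and this is what the paper uses: the characteristic functions of the $a_i$ and $b_j$ violate condition (b) of Theorem~\ref{people} for $\aleph=\aleph_2$, using strong zero-dimensionality of $\N^*$ to convert a separating continuous function into a separating clopen set.
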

\begin{proof} A classical construction in set theory known as the Hausdorff
gap \cite{hausdorff} yields the existence of two
$\omega_1$-sequences of clopen sets in $\N^*$, say $(a_i)$ and
$(b_i)$ where $ i \in \omega_1$, such that $(a_i)$ is increasing,
$(b_i)$ is decreasing, $a_i \subset b_j$ for all $i,j$ and with
the additional property that for no clopen set $c$ one may have
$a_i \subseteq c \subseteq b_j$ for all $i,j\in\omega_1$.
Considering the characteristic functions of those clopen sets,
condition $(b)$ of Theorem \ref{people} is violated for
$\aleph=\aleph_2$ (take into account that zero-dimensional
compacta are in fact strongly zero-dimensional, that is, disjoint
zero sets can be put into disjoint clopen sets).
\end{proof}

If we deal with $\aleph$-injectivity instead of
$(1,\aleph)$-injectivity, the matter becomes more complicated:
since $C(\N^*)$ contains an uncomplemented copy of itself
\cite{castplic} it is not $\mathfrak c^+$-injective. We do
not know whether it is consistent that $C(\mathbb{N}^\ast)$ is
$\aleph_2$-injective.

\section{Ultraproducts}\label{ultra}

Perfect examples of compact spaces as those of Theorem
\ref{people} -- the object we will study in the next section -- can
be obtained via ultraproducts. Let us briefly recall the definition and some basic properties of
ultraproducts of Banach spaces. For a detailed study of this
construction at the elementary level needed here we refer the
reader to Heinrich's survey paper \cite{heinrich} or Sims' notes
\cite{sims}.

Let $\Xii$ be a family of Banach spaces indexed by $I$ and let $\mathscr U$ be an ultrafilter on $I$.
The space  $ \ell_\infty(X_i)$ endowed
with the supremum norm is a Banach space, and $ c_0^\U(X_i)=
\{(x_i) \in \ell_\infty(X_i) : \lim_{\U(i)} \|x_i\|=0\} $ is a
closed subspace of $\ell_\infty(X_i)$. The ultraproduct of the
spaces $\Xii$ following $\U$ is defined as the quotient
$$
[X_i]_\U = {\ell_\infty(X_i)}/{c_0^\U(X_i)},
$$
with the quotient norm.
We denote by $[(x_i)]$ the element of $[X_i]_\U$ which has the
family $(x_i)$ as a representative. It is not difficult to show
that $ \|[(x_i)]\| = \lim_{\U(i)} \|x_i\|. $ In the case $X_i = X$
for all $i$, we denote the ultraproduct by $X_\U$, and call it the
ultrapower of $X$ following $\U$.

There is an obvious connection between $\XiU$ and the set-theoretic ultraproduct
$\langle X_i\rangle_U$ defined in Subsection~\ref{set}: indeed, the former space
can be obtained from the latter, first taking the elements for which the seminorm
$$
\langle(x_i)\rangle_\U\longmapsto \lim_{\U(i)}\|x_i\|
$$
is finite (we may consider the original norms on the $X_i$ as taking values on the
extended ray $[0,\infty]$), and then taking quotient by the kernel of the seminorm.


If $\Xii$ is a family of Banach algebras, then $\ell_\infty(X_i)$
is also a Banach algebra, with the coordinatewise product. Thus,
if $\U$ is an ultrafilter on $I$, $c_0^\U(X_i)$ is an ideal in
$\ell_\infty(X_i)$ and $[X_i]_\U$ becomes a Banach algebra with
product
$$
[(x_i)] \cdot [(y_i)] = [(x_i\cdot y_i)].
$$

Thus, if $\Kii$ is a family of compact spaces, the algebra
$[C(K_i)]_\U$ is isometrically isomorphic to $C(K)$, for
some compact space $K$; see \cite[Theorem 4.2.5]{a-k}.
This compact is called the (topological) ultracoproduct of $\Kii$, and it is
denoted $\KiU$; actually $\KiU$ is the maximal ideal space of $[C(K_i)]_\U$ equipped
with the relative weak* topology.
We refer the reader to \cite[Section 5]{bankston-survey} for a purely topological
description of the ultracoproduct construction, although we will not use it.

An ultrafilter $\U$ on a set $I$ is countably incomplete if here
is a decreasing sequence $(I_n)$ of subsets of $I$ such that
$I_n\in \U$ for all $n$, and $\bigcap_{n=1}^\infty
I_n=\varnothing$.

Notice that $\U$ is countably incomplete if
and only if there is a function $n:I\to \N$ such that
$n(i)\to\infty$ along $\U$ (equivalently, there is a family
$\e(i)$ of strictly positive numbers converging to zero along
$\U$). It is obvious that any countably incomplete ultrafilter is
non-principal  and also that every non-principal (or free)
ultrafilter on $\N$ is countably incomplete.

In order to present the main result of the Section we need Keisler's notion of an $\aleph$-good
ultrafilter \cite{chang-keisler, bankston-survey}.

\begin{defin}
Let $\fin(S)$ denote the set of finite
subsets of a given set $S$. If $\mathscr U$ is an ultrafilter on $I$, we say
that $f:\fin(S)\to \U$ is monotone
(respectively, multiplicative) if $f(A) \supset f(B)$
whenever $A\subset B$ (respectively, if $f(A\cup B)=f(A)\cap
f(B)$). The ultrafilter $\mathscr U$ is said to be
$\aleph$-good if, for every $S$ with $|S|< \aleph$, and
every monotone $f:\fin(S)\to \U$, there is a
multiplicative $g:\fin(S)\to \U$ such that
$g(A)\subset f(A)$ for all $A$.
\end{defin}

Every set of cardinality $\aleph$ supports $\aleph^+$-good ultrafilters
(see \cite[Theorem 10.4]{comfort-negrepontis} or \cite[Theorem 6.1.4]{chang-keisler}). Hence, every countably incomplete ultrafilter is $\aleph_1$-good. Since an  $\aleph^{++}$-good ultrafilter on set of cardinality $\aleph$ is necessarily fixed
(by saturation and \cite[Proposition 4.2.2]{chang-keisler}), $\aleph^+$-good ultrafilters will be simply called ``good ultrafilters''.

\begin{teor}\label{lind2}
Let $\U$ be a countably incomplete, $\aleph$-good ultrafilter on
$I$ and let $X_i$ be  a family of Banach spaces indexed by $I$. If
$[X_i]_\U$ is a Lindenstrauss space, then it is
$(1,\aleph)$-injective.
\end{teor}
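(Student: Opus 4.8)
The plan is to use the intersection property of balls from Lemma~\ref{crit} as the working characterization of $(1,\aleph)$-injectivity, and to realize a family of mutually intersecting balls in $[X_i]_\U$ by a family of balls at the coordinate level, exploiting $\aleph$-goodness to produce a \emph{multiplicative} choice of index sets that makes the coordinatewise intersections nonempty. So let me first fix a family $\{B([x^\gamma_i],r_\gamma)\}_{\gamma\in S}$ of mutually intersecting closed balls in $[X_i]_\U$, with $|S|<\aleph$. Mutual intersection means $\|[x^\alpha_i]-[x^\beta_i]\|\le r_\alpha+r_\beta$, i.e. $\lim_{\U(i)}\|x^\alpha_i-x^\beta_i\|\le r_\alpha+r_\beta$, for every pair $\alpha,\beta$. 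The goal is to produce $[z_i]\in\bigcap_\gamma B([x^\gamma_i],r_\gamma)$, that is $\lim_{\U(i)}\|z_i-x^\gamma_i\|\le r_\gamma$ for all $\gamma$.

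The heart of the argument is to turn the \emph{pairwise} near-intersection inequalities, which only hold on $\U$-large sets, into a coherent system holding on a single multiplicatively-chosen family of index sets. For each finite $A\subset S$ and each small $\eps>0$, define $f(A)=\{i\in I: \|x^\alpha_i-x^\beta_i\|<r_\alpha+r_\beta+\eps\text{ for all }\alpha,\beta\in A\}$. Since finitely many limit inequalities hold and $\U$ is a filter, each $f(A)\in\U$, and $f$ is monotone. By $\aleph$-goodness (using $|S|<\aleph$) there is a multiplicative $g:\fin(S)\to\U$ with $g(A)\subset f(A)$. I would also fold in the countable incompleteness via a sequence $\eps(i)\to 0$ along $\U$, so that one can let the approximation $\eps$ shrink as $i$ ranges over the decreasing sets $I_n$; this is the standard device that upgrades ``for each $\eps$'' to an exact limit inequality in the ultraproduct. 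Concretely, for each index $i$ let $A(i)$ be the largest finite subset of $S$ with $i\in g(A(i))$ (multiplicativity guarantees this is well defined), and on the fibre $X_i$ the points $\{x^\gamma_i:\gamma\in A(i)\}$ are now mutually within $r_\alpha+r_\beta+\eps(i)$ of each other.

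Then I would invoke the Lindenstrauss hypothesis fibrewise. The point of $[X_i]_\U$ being a Lindenstrauss space is that the ultrapower-level finite intersection property of balls is controlled by the $\mathscr{L}_{\infty,1+}$ structure: a Lindenstrauss space has the property that finitely many mutually (nearly) intersecting balls have a point nearly in their common intersection, with the defect going to $0$. Operationally, for each $i$ I would choose $z_i\in X_i$ with $\|z_i-x^\gamma_i\|\le r_\gamma+\delta(i)$ for all $\gamma\in A(i)$, where $\delta(i)\to 0$ along $\U$, using the almost-isometric embeddability of the relevant finite configuration into some $\ell_\infty^n$ (where mutually intersecting balls always intersect) together with the Lindenstrauss property to pull the intersection point back into $X_i$ up to $\delta(i)$. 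Passing to the limit along $\U$, and noting that for each fixed $\gamma$ one has $\gamma\in A(i)$ for $\U$-almost all $i$ (because $g(\{\gamma\})\in\U$), gives $\lim_{\U(i)}\|z_i-x^\gamma_i\|\le r_\gamma$, so $[z_i]$ lies in every ball.

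The main obstacle I anticipate is the fibrewise step: extracting, on each $X_i$, an approximate common point of an entire family (of size up to $|A(i)|$, which is only finite but unbounded) of mutually intersecting balls with a \emph{uniform} defect $\delta(i)\to_\U 0$. One must be careful that the Lindenstrauss property of the ultraproduct $[X_i]_\U$ genuinely transfers to a usable finite-intersection statement on the coordinate spaces $X_i$ rather than only on $[X_i]_\U$ itself; this is where countable incompleteness (furnishing the $\eps(i),\delta(i)$ slack) and the local $\ell_\infty^n$ geometry must be combined carefully, since a finite family of mutually intersecting balls in $\ell_\infty^n$ has nonempty intersection exactly, and the $\mathscr{L}_{\infty,1+}$ approximation transports this back with arbitrarily small error. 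Managing these interlocking ``almost'' estimates so that a single diagonal sequence $(z_i)$ works simultaneously for all $\gamma\in S$ is the delicate bookkeeping on which the proof turns.
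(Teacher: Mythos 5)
Your overall architecture (reduce to Lemma~\ref{crit}, use $\aleph$-goodness to pass from a monotone to a multiplicative assignment of $\U$-large sets, use countable incompleteness to let the error tend to $0$, and diagonalize to produce $(z_i)$) is the right one --- it is in essence the standard proof that set-theoretic ultraproducts over countably incomplete $\aleph$-good ultrafilters are $\aleph$-saturated, which the paper simply quotes as a black box (Chang--Keisler, Theorem 6.1.8) and applies to the internal sets $\langle B(x_i^\alpha,r_\alpha+1/m)\rangle_\U$. But there is a genuine gap at the step you yourself flag as ``the main obstacle'': you feed only the \emph{pairwise} inequalities $\|x_i^\alpha-x_i^\beta\|<r_\alpha+r_\beta+\eps$ into the monotone function $f$, and then try to extract a common approximate point of the balls $B(x_i^\gamma,r_\gamma)$, $\gamma\in A(i)$, \emph{inside the fibre} $X_i$ by invoking ``the Lindenstrauss hypothesis fibrewise.'' The hypothesis is that $[X_i]_\U$ is a Lindenstrauss space; the individual $X_i$ are arbitrary Banach spaces and need have no intersection property whatsoever (already in $\ell_2^2$ three mutually intersecting balls can have empty intersection), and there is no local $\ell_\infty^n$ structure available in $X_i$ to ``pull back'' an intersection point. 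The correct place to use the Lindenstrauss hypothesis is upstairs, in the ultraproduct itself: a Lindenstrauss space has the weak intersection property that \emph{finitely many} mutually intersecting balls have a common point (the equivalence (4.12)$\Leftrightarrow$(4.13) in \cite[Theorem 4.1]{zbook}). Hence for each finite $A\subset S$ the balls $B^\gamma$, $\gamma\in A$, already meet in $[X_i]_\U$, and taking a representative of a common point shows that the set $\{i\in I:\ \bigcap_{\gamma\in A}B(x_i^\gamma,r_\gamma+\eps)\neq\varnothing\}$ belongs to $\U$. \emph{That} is the set your monotone $f(A)$ should be (intersected with $I_{|A|}$, see below); once $f(A)$ records the existence of a common approximate point rather than mere pairwise proximity, your multiplicative refinement and diagonal choice of $z_i$ go through and reproduce the saturation argument.

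A secondary but real defect: your claim that ``$A(i)$ is the largest finite subset of $S$ with $i\in g(A(i))$, multiplicativity guarantees this is well defined'' is not justified as stated. Multiplicativity only makes $\{A\in\fin(S): i\in g(A)\}$ closed under unions (directed), which does not yield a maximum if the collection is infinite. The standard remedy is exactly where countable incompleteness enters: choose the decreasing sets $I_n\in\U$ with $\bigcap_n I_n=\varnothing$ and force $f(A)\subset I_{|A|}$, so that $i\in g(A)$ implies $|A|\leq\max\{n: i\in I_n\}<\infty$; then the directed family has bounded cardinality and hence a largest element. You allude to the $\eps(i)\to 0$ device but never impose this containment, so as written $A(i)$ may fail to exist. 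With both repairs --- Lindenstrauss used in the ultraproduct to get the finite intersection property, and $f(A)\subset I_{|A|}$ --- your argument becomes a correct, self-contained version of the paper's proof.
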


\begin{proof}
The key point is the saturation property of the set-theoretic
ultraproducts via good ultrafilters. Let $(S_i)_{i\in I}$ be a
family of sets and let $\U$ be an ultrafilter on $I$. A subset $A$
of $\langle S_i\rangle_\U$ is called internal if there are sets
$A_i\subset S_i$ such that $A=\langle A_i\rangle_\U$. It can be
proved (see \cite[Theorem 6.1.8]{chang-keisler} or \cite[Theorem 13.9]{comfort-negrepontis}) that if $\U$ is
countably incomplete and $\aleph$-good, then every family of less
than $\aleph$ internal subsets of $\langle S_i\rangle_\U$ having
the finite intersection property has nonempty intersection.\\

Let $(B^\alpha)_{\alpha\in \Gamma}$ be a family of mutually
intersecting balls in $[X_i]_\U$, with $|\Gamma|<\aleph$. Let
us write $B^\alpha=B(x^\alpha,r_\alpha)$ and let $(x_i^\alpha)$ be
fixed representations of $x^\alpha$. Clearly,
$\langle B(x_i^\alpha,r_\alpha+1/m)\rangle _\U$ is a lifting of $B(x^\alpha,
r_\alpha+1/m)$ in the set-theoretic ultraproduct $\langle X_i\rangle _\U$. As
$\XiU$ is a Lindenstrauss space, the original family $(B^\alpha)$
has the finite intersection property (see the equivalence between (4.12) and (4.13) in \cite[Theorem 4.1]{zbook}).
This implies the same for the family of internal sets
$$
(\langle B(x_i^\alpha, r_\alpha+1/m)\rangle _\U)_{(\alpha,m)\in\Gamma\times\N}.
$$
Indeed, if $F$ is a finite subset of $\Gamma\times \N$, we may
assume it is of the form $E\times \{1,\dots,k\}$ for some finite
$E\subset\Gamma$. Then there exists $z\in\bigcap_{\alpha\in
E}B^\alpha$. Thus, if $(z_i)$ is a representative of $z$, the sets
$\{i\in I:\|x_i^\alpha-z_i\|\leq 1/k\}$ belong to $\U$ for every
$\alpha\in E$ and $ \langle
(z_i)\rangle_\U\in\bigcap_{(\alpha,m)\in F}\langle B(x_i^\alpha,
r_\alpha+1/m)\rangle_\U$.\\

Since $|\Gamma\times\N|< \aleph$ and $\U$ is $\aleph$-good,
there is $x\in \langle X_i\rangle_\U$ in the nonempty intersection
$$
\bigcap_{(\alpha,m)\in \Gamma\times\N}\langle B(x_i^\alpha,
r_\alpha+1/m)\rangle_\U.
$$
It is clear that if $(x_i)$ is any representation of $x$, then $$
[(x_i)]\in\bigcap_{\alpha,m}B(x^\alpha,
r_\alpha+1/m)=\bigcap_{\alpha\in \Gamma}B^\alpha,
$$
which completes the proof.
\end{proof}

A combination of \cite{accgm1} and \cite{accgm2}, see also
\cite{wrong}, shows that there exist universally $1$-separably
injective spaces not isomorphic to any $C(K)$ space. A higher
cardinal generalization is as follows.

\begin{example}\label{generalnotCK}
For every cardinal $\aleph$ there exists a space of density
$2^\aleph$ such that
\begin{itemize}
\item[(a)] It is $(1,\aleph^+)$-injective but it is not isomorphic
to a complemented subspace of any $\mathcal M$-space. \item[(b)]
After suitable renorming, it is still $(1,\aleph^+)$-injective and
its unit ball has extreme points.
\end{itemize}
\end{example}

\begin{proof}$\;$
(a) Let $\mathcal G$ be the Gurari\u \i\  space. If $\U$ is a
countably incomplete good ultrafilter on a set of cardinality
$\aleph$, then $\mathcal G_{\U}$ is an $(1,\aleph^+)$-injective
Banach space of density $2^\aleph$ by Theorem \ref{lind2}. The
assertion in (a) now follows from \cite{accgm2}.

(b) The space of Gurari\u \i\ is isomorphic to the space $A(P)$ of
continuous affine functions on the Poulsen simplex as proved by
Lusky \cite{lusky-jfa}. See also \cite{fonf,lusky-survey}. Hence
$\mathcal G_{\U}$ is isomorphic to $A(P)_\U$, in turn isometric to
the space of continuous affine functions on certain simplex $S$,
by \cite[Proposition~2.1]{heinrichL1}. Thus, the unit ball of
$A(S)=A(P)_\U$ has extreme points: $1_S$ is one. However, $A(S)$,
being isomorphic to $\mathcal G_\U$ cannot be complemented in an
 $\mathcal M$-space. As before, the density character of  $A(S)$ equals $2^\aleph$ and $A(S)$ is
$(1,\aleph^+)$-injective.
\end{proof}

The preceding examples are as bad as the generalized continuum
hypothesis allows. Indeed, if a Banach space is
$(1,\aleph^+)$-injective and has density character $\aleph$, then
it is 1-injective and then isometric to a $C(K)$-space; see for instance \cite[Corollary 1]{neville}.
The presence of an extreme point in Part (b) is reminiscent from the early studies on injectivity
(cf. \cite{nachbin, kell, a-p}).

\section{Projectiveness properties of compact spaces}\label{projectiveness}

The compact spaces arising in Theorem \ref{people}
constitute a well known class \cite{bankston,
bankston-survey,swardson} we consider now.

\begin{defin}
A compact space $K$ is said to be an $F_{\aleph}$-space
if every couple of disjoint
open subsets of $K$ which are the union of less than $\aleph$ many closed sets have disjoint
closures.
\end{defin}

The $F_{\aleph_1}$-spaces are called simply $F$-spaces. Regarding
Theorem \ref{people} let us mention that a topological space is
called $(Q_{\aleph})$-space \cite{a-p} if every couple of disjoint
open subsets of $K$ which are the union of less than $\aleph$ many
closures of open sets have disjoint closures. This property is
formally weaker than condition $(d)$ in Theorem~\ref{people} and
stronger than $(e)$ -- because every cozero set is the union of countably many closures of open sets,
$f^{-1}(\mathbb{R}\setminus\{0\}) = \bigcup \overline{f^{-1}(\mathbb{R}\setminus[-1/n,1/n])}$ --,
so it is actually equivalent to both of them in the case of compact spaces.\\

The following proposition generalizes a result of Bankston
\cite[Theorem 2.3.7(ii)]{bankston} and solves \cite[Question
2.3.8]{bankston} by showing that the extra condition of being
Boolean is not necessary. Its proof is immediate after
Theorems~\ref{people} and \ref{lind2}.

\begin{prop}\label{Fk}
Every topological ultracoproduct via a countably incomplete, $\aleph$-good ultrafilter
is an $F_{\aleph}$-space.
\end{prop}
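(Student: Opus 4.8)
The plan is to exploit the defining identity of the topological ultracoproduct, namely that $K=\KiU$ is the compact space for which $C(K)$ is \emph{isometrically isomorphic} to the Banach-algebra ultraproduct $[C(K_i)]_\U$, as recorded in Section~\ref{ultra}. Once this identification is in place, the statement reduces to chaining together the two main theorems of the preceding sections, so I expect no genuine difficulty beyond bookkeeping.

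First I would fix a countably incomplete, $\aleph$-good ultrafilter $\U$ on $I$ and a family $\Kii$ of compact spaces, and set $K=\KiU$. Regarding $C(K)=[C(K_i)]_\U$ as the ultraproduct of the spaces $C(K_i)$, I would next record that $C(K)$ is a Lindenstrauss space. This holds for the cheapest possible reason: $C(K)$ is itself a space of continuous functions on a compactum, and every $C(K)$-space is a $\mathscr{L}_{\infty,1+}$-space (its dual being an $L_1$-space). Alternatively, one may note that each factor $C(K_i)$ is a Lindenstrauss space and that the $\mathscr{L}_{\infty,\lambda}$-property, being finitely determined, passes to ultraproducts for each $\lambda>1$.

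With $[C(K_i)]_\U$ a Lindenstrauss space and $\U$ countably incomplete and $\aleph$-good, Theorem~\ref{lind2} applies verbatim and yields that $[C(K_i)]_\U=C(K)$ is $(1,\aleph)$-injective; this is precisely condition (a) of Theorem~\ref{people}. Finally I would invoke the equivalence $(a)\Leftrightarrow(d)$ of that theorem: condition (d) asserts that disjoint open subsets of $K$ which are unions of fewer than $\aleph$ closed sets have disjoint closures, which is exactly the definition of an $F_\aleph$-space given just above. Hence $K=\KiU$ is an $F_\aleph$-space, as claimed.

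The only point demanding a moment's care is the isometric nature of the identification $[C(K_i)]_\U\cong C(\KiU)$, since Theorem~\ref{lind2} delivers an \emph{isometric} extension property and condition (a) of Theorem~\ref{people} is likewise an isometric statement; because the ultracoproduct is defined precisely through this isometry, nothing is lost in the transfer. Thus the proof is, as advertised, immediate from Theorems~\ref{people} and~\ref{lind2}.
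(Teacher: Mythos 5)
Your proof is correct and follows exactly the route the paper intends: the paper simply remarks that the proposition is immediate from Theorems~\ref{people} and~\ref{lind2}, and your argument (the ultraproduct $[C(K_i)]_\U=C(\KiU)$ is a Lindenstrauss space, hence $(1,\aleph)$-injective by Theorem~\ref{lind2}, hence $\KiU$ is an $F_\aleph$-space by the equivalence $(a)\Leftrightarrow(d)$ of Theorem~\ref{people}) is precisely the bookkeeping being left to the reader.
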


As we mentioned before, a Banach space is 1-injective if and only
if it is isometrically isomorphic to $C(K)$ for some extremely
disconnected compact space $K$ (see \cite[Theorem 2.1]{zbook}) and such compacta
are precisely the projective elements in the category of compacta and continuous maps,
a classical result by Gleason \cite{gleason} that can be seen in \cite[Theorem 10.51]{walker}.
Which means that if $\sigma: L\to M$ is a
continuous surjection then any continuous map $\vp:K\to M$ lifts
to $L$ in the sense that there is $\tilde \vp:K\to L$ such that
$\vp=\sigma\circ\tilde\vp$. Of course this can be rephrased by
saying that $C(K)$ is injective in the category of commutative
C*-algebras. One may wonder if some natural relativization of this
result holds, meaning whether the fact that the space $C(K)$ is
injective with respect to a subcategory of Banach spaces is
reflected dually by $K$ being projective with respect to some
subcategory of compact spaces. If $\mathscr C$ is some class of
continuous surjections between compact spaces, we say that a
compact space $K$ is projective with respect to $\mathscr C$ if for
every continuous surjection $\pi:L\To M$ that belongs to $\mathscr C$
and every continuous map $f:K\To M$ there exists a continuous
function $g:K\To M$ such that $\pi g = f$. The first guess would
be that $C(K)$ being injective with respect to Banach spaces of
density less than $\aleph$ should be equivalent to the Banach
space $K$ being projective with respect to compact spaces of
weight less than $\aleph$. There is however a serious obstruction
for this approach: if $\pi$ is any surjection from the Cantor set $\Delta$ onto
the unit interval $\I$ and $K$ is any connected $F$-space, then
the only liftable maps $f:K\to\I$ are the constant ones (Proposition \ref{Fk} provides
a good number of such spaces: it is not difficult to realize that ultracoproducts
preserve connectedness since a compact space $K$ is connected if and only if the only
idempotents in $C(K)$ are 0 and 1). There are
two ways of avoiding this problem. The first way is to assume $K$
to be totally disconnected or, which is the same, zero-dimensional (Theorem \ref{metricallyprojective}).
The other way is to reduce the subcategory we are dealing with and
to consider only compact convex sets and affine maps between them
(Theorem \ref{affinemetricallyprojective}). Before going further
let us remark:

\begin{lema}
Let $\aleph$ be a cardinal number, and $K$ a compact space. The
following are equivalent:
\begin{enumerate}
\item Every open cover of every subspace of $K$ has a subcover of
cardinality less than $\aleph$. \item Every open subset of $K$ is
the union of less than $\aleph$ many closed subsets of $K$.
\end{enumerate}
\end{lema}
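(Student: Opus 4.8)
The plan is to prove both implications directly, using only that a compact Hausdorff space is normal (for $(1)\Rightarrow(2)$) and that closed subsets of $K$ are compact (for $(2)\Rightarrow(1)$). No machinery from earlier in the paper is needed.

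For $(1)\Rightarrow(2)$ I would fix an open set $U\subseteq K$ and exploit normality to manufacture a cover of $U$ by open sets whose closures stay inside $U$. Concretely, for each $x\in U$ the sets $\{x\}$ and $K\setminus U$ are disjoint and closed, so normality provides an open $V_x$ with $x\in V_x\subseteq\overline{V_x}\subseteq U$. Since each $V_x$ is contained in $U$, it is open in the subspace $U$, so $\{V_x:x\in U\}$ is an open cover of the subspace $U$. Applying hypothesis $(1)$ to this subspace yields a subcover $\{V_{x_\alpha}:\alpha\in A\}$ with $|A|<\aleph$. Then $U=\bigcup_{\alpha\in A}V_{x_\alpha}\subseteq\bigcup_{\alpha\in A}\overline{V_{x_\alpha}}\subseteq U$, so $U=\bigcup_{\alpha\in A}\overline{V_{x_\alpha}}$ exhibits $U$ as a union of fewer than $\aleph$ closed sets.

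For $(2)\Rightarrow(1)$ I would take an arbitrary subspace $S\subseteq K$ and an open cover $\{W_i:i\in I\}$ of $S$ in its subspace topology, writing $W_i=S\cap U_i$ with each $U_i$ open in $K$. Set $U=\bigcup_{i\in I}U_i$; this is open in $K$ and contains $S$. By $(2)$ we may write $U=\bigcup_{\beta<\lambda}F_\beta$ with $\lambda<\aleph$ and each $F_\beta$ closed in $K$, hence compact. Each $F_\beta$ is covered by $\{U_i:i\in I\}$, so compactness extracts a finite $J_\beta\subseteq I$ with $F_\beta\subseteq\bigcup_{i\in J_\beta}U_i$. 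Putting $J=\bigcup_{\beta<\lambda}J_\beta$ gives $U\subseteq\bigcup_{i\in J}U_i$, whence $S\subseteq\bigcup_{i\in J}U_i$ and $S=\bigcup_{i\in J}W_i$.

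The only point requiring genuine care — and the natural place where the argument could go wrong — is the cardinality of $J$ in the last step, since it is a union of $\lambda$ many finite sets. I would handle this by the standard estimate $|J|\le\lambda\cdot\aleph_0$, noting that when $\lambda$ is infinite this equals $\lambda<\aleph$, and when $\lambda$ is finite the set $J$ is finite; in either case $|J|<\aleph$ because $\aleph$ is infinite. Thus $\{W_i:i\in J\}$ is the desired subcover. I do not expect any deeper obstacle: the whole content is the interplay between normality, compactness of closed subsets, and this elementary piece of cardinal arithmetic.
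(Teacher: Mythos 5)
Your proposal is correct and follows essentially the same route as the paper: for $(1)\Rightarrow(2)$ one covers $U$ by open sets whose closures lie in $U$ and applies $(1)$, and for $(2)\Rightarrow(1)$ one writes the union of the cover as fewer than $\aleph$ compact pieces and takes a finite subcover of each. Your extra care about the cardinality of the union of finitely many indices per piece is a correct (if routine) elaboration of a step the paper leaves implicit.
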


\begin{proof}
Suppose (1) holds and let $U$ be an open subset of $K$. Simply
consider an open cover of $U$ by open sets $V$ with
$\overline{V}\subset U$. Conversely, assume (2) and let $S\subset
K$ and $\{U_i : i\in I\}$ a cover of $S$ by open subsets of $K$.
Consider $U=\bigcup_{i\in I}U_i$. By (2), $U$ is the union of less
than $\aleph$ many compact sets, so it is enough to take a finite
subcover of each.
\end{proof}

We denote by $H\!L_{\aleph}$ the class of compact spaces
satisfying the conditions of the preceding lemma. Observe that
this class is stable under continuous images and that it contains
all compact spaces of weight less than $\aleph$. A compact space
belongs to $H\!L_{\aleph_1}$ if and only if it is hereditarily
Lindel\"{o}f, if and only if it is perfectly normal. An example of
a hereditarily Lindel\"{o}f space of uncountable weight is the
double arrow space: the lexicographical product of ordered sets
$[0,1]\times\{0,1\}$ endowed with the order topology. The
equivalence of (1), (2) and (3) in the next result is due to
Neville and Lloyd \cite{nevillealeph}. The fourth condition states
that $F_{\aleph}$ spaces are projective with respect to a larger
class of spaces than those of weight less than $\aleph$.

\begin{teor}\label{metricallyprojective}
For a compact space $K$ the following are equivalent:
\begin{enumerate}
\item $K$ is a zero-dimensional $F_{\aleph}$-space.
\item $K$ is projective with respect to
surjections $\pi:L\To M$ such that $w(L)<\aleph$.
\item $K$ is projective with respect to surjections $\pi:L\To M$
such that $w(M)<\aleph$ and $w(L)\leq\aleph$.
\item $K$ is projective with
respect to surjections $\pi:L\To M$ with $L\in H\!L_{\aleph}$.
\end{enumerate}
\end{teor}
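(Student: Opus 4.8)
The plan is to complete the cycle of equivalences by adjoining, to the Neville--Lloyd equivalence of (1), (2) and (3) cited above, the two implications $(4)\Rightarrow(2)$ and $(2)\Rightarrow(4)$; since (1), (2), (3) are already known to coincide, this places (4) among them. The first implication is essentially formal, while the second carries the real content.

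For $(4)\Rightarrow(2)$ I use a class inclusion. Every compact space $L$ with $w(L)<\aleph$ lies in $H\!L_{\aleph}$, as recorded just before the theorem, so the surjections $\pi:L\To M$ with $w(L)<\aleph$ form a subfamily of those with $L\in H\!L_{\aleph}$. Projectivity with respect to the larger family (4) is therefore a formally stronger demand than projectivity with respect to the smaller one (2), and the implication is immediate.

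The substance is $(2)\Rightarrow(4)$. Fix a surjection $\pi:L\To M$ with $L\in H\!L_{\aleph}$ and a continuous $f:K\To M$; the goal is a lifting $g:K\To L$ with $\pi g=f$. Replacing $M$ by the compact set $f(K)$ and $L$ by $\pi^{-1}(f(K))$, which again belongs to $H\!L_{\aleph}$ since that class is hereditary, I may assume $f$ onto. I then represent $\pi$ as the limit of a continuous inverse spectrum of surjections $\pi_\gamma:L_\gamma\To M_\gamma$ between compacta of weight less than $\aleph$ (well-ordered when $\aleph$ is regular, and handled by a Zorn-type argument in general), arranged so that the comparison maps $L_{\gamma+1}\To L_\gamma\times_{M_\gamma}M_{\gamma+1}$ into the fibered products are surjective, and build the lifting by transfinite recursion. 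At a successor stage, given a lifting $g_\gamma:K\To L_\gamma$ of the induced map $f_\gamma:K\To M_\gamma$, the pair $(g_\gamma,f_{\gamma+1})$ maps $K$ into $L_\gamma\times_{M_\gamma}M_{\gamma+1}$ by compatibility, and I lift it against the surjection $L_{\gamma+1}\To L_\gamma\times_{M_\gamma}M_{\gamma+1}$, whose source has weight less than $\aleph$, so that (2) applies and yields $g_{\gamma+1}$; at a limit stage I take the induced map into the inverse limit. The sought lifting of $f$ is the map $g$ obtained at the top of the spectrum.

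The main obstacle is the spectral decomposition together with the coherence needed to survive limit stages. The role of the hypothesis $L\in H\!L_{\aleph}$ is precisely to guarantee that the weight-less-than-$\aleph$ quotients of $L$ are cofinal and that the spectrum they form is \emph{continuous}, so that its inverse limits at limit ordinals coincide with the corresponding stages and the limits genuinely recover $L$ and $M$ rather than proper quotients; without such a factorization the recursion would break down at limits. Keeping the comparison maps into the fibered products surjective, so that (2) is genuinely applicable at each successor step, and checking the coherence $g_\gamma=(\text{bonding map})\circ g_{\gamma'}$ for $\gamma<\gamma'$ that makes the limit maps well defined, are the delicate points. I note finally that the connectedness obstruction described before the theorem does not interfere here: the individual successor-step liftings are furnished directly by hypothesis (2), which by the Neville--Lloyd equivalence already encodes the zero-dimensionality of $K$.
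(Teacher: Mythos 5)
Your reduction is legitimate in outline: $(4)\Rightarrow(2)$ does follow from the inclusion of the weight-less-than-$\aleph$ compacta in $H\!L_{\aleph}$, and since the paper attributes $(1)\Leftrightarrow(2)\Leftrightarrow(3)$ to Neville and Lloyd, everything rests on $(2)\Rightarrow(4)$. It is there that your argument has a genuine gap, at the successor step of the spectral recursion. Given a lifting $g_\gamma:K\To L_\gamma$ of $f_\gamma$, the pair $(g_\gamma,f_{\gamma+1})$ does land in the fibered product $L_\gamma\times_{M_\gamma}M_{\gamma+1}$, but there is no reason for it to land in the \emph{image} of $L_{\gamma+1}$ there: a point $z\in L$ with $p_\gamma(z)=g_\gamma(x)$ satisfies only $q_\gamma(\pi(z))=f_\gamma(x)$, not $q_{\gamma+1}(\pi(z))=f_{\gamma+1}(x)$. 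The comparison maps cannot in general be ``arranged to be surjective'' onto the fibered products --- that would require every compatible pair to be realized by an actual point of $L$, which fails already for very simple $\pi$ --- so condition (2) has nothing to be applied to, and the previously chosen $g_\gamma$ may simply be incompatible with the finer data. There is a decisive sanity check: the only features of the spectrum your argument uses (cofinality and continuity of the weight-less-than-$\aleph$ quotients) hold for \emph{every} compact $L$, since the projections of $L\subset[0,1]^{\Gamma}$ onto subproducts indexed by sets of size less than $\aleph$ always form such a spectrum. Your proof, if valid, would therefore show that any $K$ satisfying (2) is projective with respect to \emph{all} surjections, hence extremally disconnected by Gleason's theorem; this contradicts the paper's own Dow--Hart example of a zero-dimensional compact $F$-space admitting no lifting against $\beta\N\To\alpha\N$. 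So the hypothesis $L\in H\!L_{\aleph}$ must enter the argument somewhere other than where you placed it.

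The paper's proof avoids exactly this coherence problem by never quotienting the base: it fixes $L\subset 2^\Gamma$, forms the graph $G=\{(x,\pi(x)):x\in L\}\subset 2^\Gamma\times M$ with the \emph{full} $M$ retained, and recurses over the coordinate projections $G_\alpha\subset 2^\alpha\times M$. Each partial lifting $h_\alpha:K\To G_\alpha$ then satisfies $\pi_2h_\alpha=f$ on the nose, so coherence at limits is automatic, and each successor step is a two-point-fiber extension $G_{\alpha+1}\To G_\alpha$ resolved not by invoking (2) but by the clopen-separation form of the zero-dimensional $F_{\aleph}$ property of $K$ (condition (d) of Theorem \ref{people}). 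This is where $G_\alpha\in H\!L_{\aleph}$ is genuinely used: it guarantees that the two relevant disjoint open subsets of $G_\alpha$ are unions of fewer than $\aleph$ closed sets, so their $f$-preimages in $K$ can be separated by a clopen set. To repair your argument you would need to pass through (1) first (which Neville--Lloyd gives you) and then run a recursion of this graph type, rather than lifting against small fibered products.
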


\begin{proof} Notice that the implications $(4)\Rightarrow (2)$ and $(3)\Rightarrow (2)$ are trivial.
We prove first that (2) implies (1). In order to show that $K$ is a zero-dimensional $F_{\aleph}$-space
we shall show that for any disjoint open subsets $A$ and $B$,
which are the union of $\kappa<\aleph$ many closed subsets of $K$
there exists a clopen set $C$ such that $A\subset C$ and
$B\cap C=\varnothing$.
Suppose $A=\bigcup_{\alpha<\kappa} C_\alpha$ and
$B=\bigcup_{\alpha<\kappa}D_\alpha$ where each $C_\alpha$ and
each $D_\alpha$ are closed sets.
For every $\alpha<\kappa$ let $f_\alpha:K\To [-1,1]$ be a continuous
function such that
\begin{itemize}
\item $f_\alpha|_{C_\alpha}= -1$,
\item $f_\alpha|_A\leq 0$,
\item $f_\alpha|_{K\setminus (A\cup B)}= 0$,
\item $f_\alpha|_B\geq 0$, and
\item $f_\alpha|_{D_\alpha}= 1$.
\end{itemize}

Consider the map $f:K\To [-1,1]^\kappa$ given by $f(x) =
(f_\alpha(x))_{\alpha<\kappa}$. Let also
$L=[0,1]^\kappa\times\{-1,1\}$, $\pi:L\To [-1,1]^\kappa$ be given
by $\pi(x,t)=(t\cdot x_\alpha)_{\alpha<\kappa}$ and $M=\pi(L)$.
Notice that the image of $f$ is contained in $M$, hence we are in
a position to apply the projectiveness property so that there exists
$g:K\To L$ with $\pi g=f$. But then $g(A)\subset
[0,1]^\kappa\times\{-1\}$ and $g(B)\subset
[0,1]^\kappa\times\{1\}$, hence there are disjoint clopen sets
which separate $A$ and $B$. Conversely, we prove now that $(1)$ implies $(3)$ and $(4)$.
So assume now that $K$ is a zero-dimensional $F_{\aleph}$ space.
We assume that we are given an onto map $\pi:L\To M$ like either in (3) or (4), and $f:K\To M$,
and we will find $g:K\To L$ with $\pi g = f$.
\smallskip

{\sc Case 1.} We suppose that $M\in H\!L_{\aleph}$, $L\subset
M\times\{0,1\}$ and $\pi:L\To M$ is the first-coordinate
projection. Consider $$A =
K\setminus f^{-1}[\pi(L\cap M\times\{1\})],$$
$$B=K\setminus
f^{-1}[\pi(L\cap M\times\{0\})].$$ These are two disjoint open subsets of
$K$ which are moreover the union of less than $\aleph$ many closed
sets, because $M\in H\!L_{\aleph}$. Therefore, since $K$ is a
totally disconnected $F_{\aleph}$ space, there exists a clopen set
$C\subset K$ such that $A\subset C$ and $B\subset C=\varnothing$.
The desired function $g:K\To L$ can be defined now as $g(x)=(x,0)$
if $x\in C$ and $g(x)=(x,1)$ if $x\not\in C$.

\smallskip

{\sc Case 2.} We suppose that $L\in H\!L_{\aleph}, L\subset
M\times[0,1]$ and $\pi:L\To M$ is the first-coordinate projection.
Let $q:2^\omega\To [0,1]$ be a continuous surjection from the
Cantor set onto the unit interval. Let $L' = \{(x,t)\in M\times
2^\omega : (x,q(t))\in L \}$ and $\pi':L'\To M$ the first
coordinate projection. We shall find a continuous map $g':K\To L'$
such that $\pi'g'=f$. From $g'$ we easily obtain the desired
function $g$ by composing with $q$ in the second coordinate. For
every $n<m\leq\omega$ let $p_n^m:M\times 2^m\To M\times 2^n$ be
the natural projection which forgets about coordinates $i\geq n$
in $2^m$. Let $L_n = p_n^\omega(L')$. Each $L_n\subset L\times
2^n$ is a member of $H\!L_{\aleph}$. Hence, by repeated
application of the Case 1 proved above, we can construct
inductively continuous maps $g_n:K\To L_n$ such that $g_0=f$ and
$\pi_{n}^{n+1} g_{n+1} = g_n$. These functions must be of the form
$g_n(x) = (f(x),\gamma_0(x),\ldots,\gamma_{n-1}(x))$ for some
continuous functions $\gamma_i:K\To 2$, $i<\omega$. The function
$g':K\To L'\subset M\times 2^\omega$ is defined as $g'(x) =
(f(x),\gamma_0(x),\gamma_1(x),\ldots)$.
\smallskip

{\sc General case.} We view $L$ as a closed subset of a cube
$L\subset 2^\Gamma$, where $\Gamma$ is some cardinal. If we are
dealing with condition (3), then $\Gamma=\aleph$. Let
$G=\{(x,\pi(x)) : x\in L\}\subset [0,1]^\Gamma\times M$ be the
graph of $\pi$, and let $\pi_1:G\To L$ and $\pi_2:G\To M$ be the
two coordinate functions. We shall find a continuous function
$h:K\To G$ such that $\pi_2 h=f$. From this we immediately get the
desired lifting as $g=\pi_1 h$.

\smallskip

For every $\alpha<\beta\leq \Gamma$ let
$p_\alpha^\beta:2^\beta\times M\To 2^\alpha\times M$ be the
natural projection and let $G_\alpha = p_\alpha^\Gamma(G)$. If we
assume condition (3) then all spaces $G_\alpha$ have weight less
than $\aleph$, while if we assume (4), then all these spaces
belong to $H\!L_{\aleph}$ because $G$ is
homeomorphic to $L$ and this class is stable under taking
continuous images. We construct by transfinite induction
continuous functions $h_\alpha:K\To G_\alpha$ such that $\pi_2
h_\alpha=f$ and such that they are coherent: $p_\alpha^\beta
h_\beta = h_\alpha$ for $\alpha<\beta$. In the one immediate
successor step of the induction, in order to obtain $h_{\alpha+1}$
from $h_\alpha$ we are in a position to apply Case 2 above. In the
limit step, the function $h_\beta$ is uniquely determined by the
functions $h_\alpha$ with $\alpha<\beta$, similarly as we did in
Case 2.
\end{proof}

\begin{cor} The following spaces are ``projective'' with respect to all continuous
surjections between metrizable compacta
\begin{itemize}
\item
$\mathbb{N}^\ast$, the growth of the integers in its Stone-\v Cech compactification.
\item
Ultracoproducts of families of totally disconnected compacta built over countably incomplete ultrafilters.
\end{itemize}
\end{cor}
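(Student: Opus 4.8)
The plan is to reduce everything to Theorem~\ref{metricallyprojective}. A compact space is metrizable precisely when it has countable weight, that is weight $<\aleph_1$, and a continuous image of a metrizable compactum is again metrizable. Hence every continuous surjection $\pi:L\To M$ between metrizable compacta satisfies $w(L)<\aleph_1$, and conversely condition (2) of Theorem~\ref{metricallyprojective} taken with $\aleph=\aleph_1$ is exactly the assertion that $K$ is projective with respect to all continuous surjections between metrizable compacta (the codomain $M=\pi(L)$ being automatically metrizable). By the implication $(1)\Rightarrow(2)$ of that theorem it therefore suffices to show that each of the two spaces in the statement is a zero-dimensional $F_{\aleph_1}$-space, i.e.\ a zero-dimensional $F$-space.

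For $\N^*=\beta\N\setminus\N$ both properties are immediate. On the one hand $\beta\N$ is zero-dimensional, since the closures in $\beta\N$ of subsets of $\N$ form a basis of clopen sets; intersecting these with $\N^*$ yields a basis of clopen sets for $\N^*$, so $\N^*$ is zero-dimensional. On the other hand we have already seen that $C(\N^*)=\ell_\infty/c_0$ is $(1,\aleph_1)$-injective, and by the equivalence $(a)\Leftrightarrow(d)$ of Theorem~\ref{people} (with $\aleph=\aleph_1$) this says precisely that $\N^*$ is an $F_{\aleph_1}$-space. Thus $\N^*$ is a zero-dimensional $F$-space and the first assertion follows from the reduction above.

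For an ultracoproduct $\KiU$ of totally disconnected compacta $\Kii$ over a countably incomplete ultrafilter $\U$, the $F$-space property comes straight from Proposition~\ref{Fk}: every countably incomplete ultrafilter is $\aleph_1$-good, so $\KiU$ is an $F_{\aleph_1}$-space. The only point needing an argument is that $\KiU$ is zero-dimensional, and here I would use the algebraic description $C(\KiU)=[C(K_i)]_\U$ together with the fact that a compact Hausdorff space is zero-dimensional exactly when the idempotents of its function algebra have dense linear span. Given $[(f_i)]\in[C(K_i)]_\U$ with $\|[(f_i)]\|\le M$ and $\e>0$, one has $\|f_i\|\le M+1$ for $\U$-almost all $i$; for such $i$, since $K_i$ is zero-dimensional, the continuous function $f_i$ is approximated within $\e$ by a locally constant $s_i$ whose values all lie in the \emph{fixed} finite grid $\{\,j\e:\ |j|\le\lceil (M+1)/\e\rceil+1\,\}$, so that $s_i=\sum_j j\e\,\mathbf{1}_{V_{i,j}}$ with each $V_{i,j}$ clopen and $j$ ranging over a fixed finite index set. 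Then $[(s_i)]=\sum_j j\e\,[(\mathbf{1}_{V_{i,j}})]$ is a fixed finite linear combination of idempotents of $[C(K_i)]_\U$, and $\|[(f_i)]-[(s_i)]\|=\lim_\U\|f_i-s_i\|\le\e$. Hence the idempotents have dense linear span in $C(\KiU)$, so $\KiU$ is zero-dimensional, and the second assertion follows.

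I expect the only genuine work to be this zero-dimensionality of the ultracoproduct; the reduction to a zero-dimensional $F$-space and the two $F$-space verifications are direct consequences of Theorems~\ref{metricallyprojective},~\ref{people} and Proposition~\ref{Fk}. The mild subtlety in the ultracoproduct step is precisely to bound, uniformly in $i$, the number of distinct values of the locally constant approximants $s_i$, so that $[(s_i)]$ lands in the linear span of \emph{finitely many} idempotents of the ultraproduct rather than in an uncontrolled coordinatewise sum.
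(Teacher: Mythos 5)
Your argument is correct and is exactly the intended deduction: the paper states this as an unproved corollary of Theorem~\ref{metricallyprojective}, the point being that metrizable compacta are precisely those of weight less than $\aleph_1$, so condition (2) of that theorem with $\aleph=\aleph_1$ is projectivity with respect to continuous surjections between metrizable compacta, and both listed spaces are zero-dimensional $F$-spaces (the $F$-space property via Theorem~\ref{people} and Proposition~\ref{Fk}, respectively). Your explicit verification that an ultracoproduct of totally disconnected compacta is again zero-dimensional, via density of the linear span of idempotents in $[C(K_i)]_\U$ with the uniform finite grid of values, is a detail the paper leaves implicit, and your argument for it is sound.
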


\begin{cor}\label{HLprojective}
Totally disconnected $F$-spaces are projective with respect to
hereditarily Lindel\"{o}f compact spaces.
\end{cor}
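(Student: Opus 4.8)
The plan is to read off this corollary as the special case $\aleph=\aleph_1$ of Theorem~\ref{metricallyprojective}, specifically the equivalence $(1)\Leftrightarrow(4)$. All of the work has already been done in that theorem; what remains is purely to match the vocabulary of the corollary to the conditions listed there.

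First I would identify the hypothesis. A totally disconnected $F$-space is, by the definitions given, an $F_{\aleph_1}$-space that is totally disconnected; and since for compact Hausdorff spaces total disconnectedness coincides with zero-dimensionality (as the paper records), this is exactly a zero-dimensional $F_{\aleph_1}$-space. That is precisely condition $(1)$ of Theorem~\ref{metricallyprojective} in the case $\aleph=\aleph_1$.

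Next I would identify the conclusion. As noted immediately after the lemma preceding Theorem~\ref{metricallyprojective}, a compact space lies in $H\!L_{\aleph_1}$ exactly when it is hereditarily Lindel\"of. Hence ``projective with respect to hereditarily Lindel\"of compact spaces''---meaning projective with respect to the class of continuous surjections $\pi:L\to M$ whose domain $L$ is hereditarily Lindel\"of---is literally condition $(4)$ of the theorem at $\aleph=\aleph_1$. One may observe in passing that the constraint falls only on $L$: since $M=\pi(L)$ is a continuous image of $L$ and the class $H\!L_{\aleph_1}$ is stable under continuous images, $M$ is automatically hereditarily Lindel\"of as well.

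With these two identifications in place, the equivalence $(1)\Leftrightarrow(4)$ of Theorem~\ref{metricallyprojective}, applied at $\aleph=\aleph_1$, delivers the corollary at once. I do not expect any genuine obstacle here: the entire mathematical content resides in the transfinite-induction argument of Theorem~\ref{metricallyprojective}, and this corollary simply unwinds the definitions of $F$-space and of the class $H\!L_{\aleph_1}$ in the first uncountable case.
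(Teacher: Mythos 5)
Your proposal is correct and is exactly the paper's (implicit) argument: the corollary is the case $\aleph=\aleph_1$ of the implication $(1)\Rightarrow(4)$ in Theorem~\ref{metricallyprojective}, using the identifications the paper itself records ($F$-space $=$ $F_{\aleph_1}$-space, totally disconnected $=$ zero-dimensional for compacta, and $H\!L_{\aleph_1}$ $=$ hereditarily Lindel\"{o}f). Nothing further is needed.
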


Some particular cases of Corollary \ref{HLprojective} are proven
by Przymusi\'{n}ski \cite{Przymusinki} to the effect of showing
that every hereditarily Lindel\"{o}f compact space is a continuous
image of $\mathbb{N}^\ast$. Yet his arguments require some extra
hypotheses which are unnecessary at the end. In the following
Corollary, we denote by $RO(X)$ the set of all regular open
subsets of $X$, that is, those open sets which are interiors of
closed sets.

\begin{cor}
Let $K$ be a totally disconnected $F_{\aleph}$-space. Then $K$ is
projective with respect to surjections $\pi:L\To M$ in which
$w(M)<\aleph$ and $|RO(M)|\leq \aleph$.
\end{cor}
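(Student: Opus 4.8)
The plan is to reduce the statement to part~(3) of Theorem~\ref{metricallyprojective} by replacing the (possibly huge) space $L$ with a closed subspace of weight at most $\aleph$ that still maps onto $M$. Since $K$ is a totally disconnected $F_\aleph$-space it satisfies condition~(1) of Theorem~\ref{metricallyprojective}, hence also condition~(3). So it is enough to produce a closed set $L'\subset L$ with $\pi(L')=M$ and $w(L')\leq\aleph$: then any continuous $f:K\To M$ lifts through the surjection $\pi|_{L'}:L'\To M$ (which satisfies $w(M)<\aleph$ and $w(L')\leq\aleph$), and composing with the inclusion $L'\hookrightarrow L$ gives the required $g:K\To L$.

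To obtain $L'$, I would first pass to the absolute (Gleason cover) of $M$. The regular open algebra $RO(M)$ is a complete Boolean algebra, and its Stone space $E=\mathrm{St}(RO(M))$ is an extremally disconnected compact space carrying a canonical irreducible continuous surjection $k:E\To M$. The clopen sets of $E$ are indexed by $RO(M)$ and form a base, so $w(E)\leq|RO(M)|\leq\aleph$.

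Next, since $E$ is extremally disconnected it is projective in the category of compacta by Gleason's theorem \cite{gleason,walker}. Hence the surjection $k:E\To M$ lifts through $\pi:L\To M$, yielding a continuous map $\tilde k:E\To L$ with $\pi\tilde k=k$. Setting $L'=\tilde k(E)$, a closed subset of $L$, we get $\pi(L')=k(E)=M$ because $k$ is onto, and $w(L')\leq w(E)\leq\aleph$ because weight does not increase under continuous surjections of compacta. Thus $\pi|_{L'}:L'\To M$ meets the hypotheses of Theorem~\ref{metricallyprojective}(3), which completes the argument.

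The one delicate point is precisely the control on the weight of the preimage: an arbitrary irreducible restriction of $\pi$ over $M$ need not have small weight, so one cannot simply shrink $\pi$ to an irreducible map inside $L$. The device that makes everything work is that the absolute $E$ has weight bounded by $|RO(M)|$, which is exactly the quantity the hypothesis controls; this explains why the correct assumption is $|RO(M)|\leq\aleph$ rather than any bound on $w(L)$. Once $L'$ has been produced, the remainder is a routine appeal to Theorem~\ref{metricallyprojective}.
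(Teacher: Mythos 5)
Your proof is correct and uses exactly the same two ingredients as the paper: the Gleason cover of $M$, whose weight is $|RO(M)|\leq\aleph$, combined with Gleason's projectivity theorem and part~(3) of Theorem~\ref{metricallyprojective}. The only (cosmetic) difference is the order of composition: the paper first lifts $f$ to the Gleason cover $G$ via Theorem~\ref{metricallyprojective}(3) and then maps $G$ into $L$ by projectivity, whereas you first push the cover into $L$ and then lift $f$ into its image $L'$.
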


\begin{proof}
Let $f:K\To L$ as usual, and let $p:G\To M$ be the Gleason cover of $M$. We refer to
\cite{walker} for an explanation about Gleason covers. We just
recall the facts that we need about it: the space $G$ is an
extremely disconnected space (that is, projective with respect to
the full category of compact spaces), $w(G)=|RO(M)|$ and, and
$p:G\To M$ is an onto continuous map. Since $w(G)\leq \aleph$ and
$w(M)<\aleph$, by Theorem \ref{metricallyprojective} there exists
$h:K\To G$ such that $ph=f$. Since $G$ is projective, there exists
$u:G\To L$ such that $\pi u =p$. Take $g= uh$.
\end{proof}

\begin{cor}[Neville and Lloyd]
If $\kappa$ is a cardinal for which $\kappa^+=2^\kappa$, and $K$
is a totally disconnected compact $F_{\kappa^+}$-space, then $K$
is projective with respect to all surjections $\pi:L\To M$ such
that $w(M)\leq\kappa$.
\end{cor}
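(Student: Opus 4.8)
The plan is to read this off from the preceding Corollary (the one asserting that a totally disconnected $F_\aleph$-space is projective with respect to surjections $\pi:L\To M$ with $w(M)<\aleph$ and $|RO(M)|\leq\aleph$), applied with $\aleph=\kappa^+$. Since $K$ is a totally disconnected compact $F_{\kappa^+}$-space, the only thing to check is that an arbitrary surjection $\pi:L\To M$ with $w(M)\leq\kappa$ meets the two hypotheses needed there, namely $w(M)<\kappa^+$ and $|RO(M)|\leq\kappa^+$. Note that $L$ is left completely unrestricted in that Corollary (its proof factors through the Gleason cover of $M$), which is exactly what we want here.

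The first condition is immediate, since $w(M)\leq\kappa<\kappa^+$. For the second I would argue by a crude count on $M$ alone: fixing a base $\mathcal B$ with $|\mathcal B|=w(M)$, every open subset of $M$ is a union of members of $\mathcal B$ and is therefore determined by the subfamily of $\mathcal B$ it contains; hence $M$ has at most $2^{w(M)}$ open sets, and a fortiori $|RO(M)|\leq 2^{w(M)}\leq 2^\kappa$. This is the single point at which the arithmetic hypothesis $\kappa^+=2^\kappa$ is used: it collapses the a priori bound $2^{w(M)}$ on the size of the regular-open algebra down to $|RO(M)|\leq 2^\kappa=\kappa^+$, as required.

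With both hypotheses verified, the preceding Corollary applies verbatim to $\pi:L\To M$ and produces, for every continuous $f:K\To M$, a continuous $g:K\To L$ with $\pi g=f$, which is precisely the asserted projectiveness. I expect no genuine obstacle: the whole content is already contained in the previous Corollary, and the hypothesis $\kappa^+=2^\kappa$ plays only the bookkeeping role just described. Should one prefer not to cite that Corollary, the same proof can be inlined: take the Gleason cover $p:G\To M$, observe $w(G)=|RO(M)|\leq\kappa^+$ and $w(M)<\kappa^+$, lift $f$ through $p$ via Theorem \ref{metricallyprojective}, and then lift through $\pi$ using the projectivity of the extremally disconnected space $G$, composing the two liftings.
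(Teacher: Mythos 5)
Your argument is correct and coincides with the paper's own proof: both apply the preceding Corollary with $\aleph=\kappa^+$ and use the bound $|RO(M)|\leq 2^{w(M)}\leq 2^{\kappa}=\kappa^+$, which is exactly where the hypothesis $\kappa^+=2^\kappa$ enters. Nothing further is needed.
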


\begin{proof} Apply the preceding Corollary for $\aleph=\kappa^+$,
and notice that one always has
$
|RO(M)|\leq 2^{w(M)}
$
because every open set is the union of a family of open sets
from a basis.
\end{proof}

Neville and Lloyd \cite{nevillealeph} asked whether the assumption
that $\kappa^+=2^\kappa$ can be removed. We point out that the
compact space constructed by Dow and Hart \cite{dowhart} we used
in \cite[Theorem 7]{accgm1} provides a negative answer to their
question.

\begin{teor}
It is consistent that there exists a zero-dimensional
compact $F$-space $K$ which is not projective with respect
to surjections $\pi:L\To M$ with $w(M)=\aleph_0$.
\end{teor}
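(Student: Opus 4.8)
The plan is to reduce the assertion to a single, canonical lifting problem—lifting through the Gleason cover—and then to feed in the Dow--Hart space as the source of a non-liftable map. First I would record the following reduction, valid in \textsf{ZFC} for any compact $K$ and any compact $M$: writing $p\colon G\To M$ for the Gleason cover of $M$ (so $G$ is extremally disconnected and $p$ is an irreducible continuous surjection, see \cite{walker}), the space $K$ is projective with respect to \emph{every} continuous surjection onto $M$ if and only if every continuous $f\colon K\To M$ lifts through $p$. One implication is trivial; for the other, given any surjection $\pi\colon L\To M$ and any $f\colon K\To M$, first lift $f$ to $\tilde f\colon K\To G$ with $p\tilde f=f$, and then, since $G$ is projective with respect to all compacta by Gleason's theorem, lift $p$ to $u\colon G\To L$ with $\pi u=p$; the composite $g=u\tilde f$ satisfies $\pi g=f$. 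Consequently, to exhibit a zero-dimensional compact $F$-space that is \emph{not} projective with respect to some surjection $\pi\colon L\To M$ with $w(M)=\aleph_0$, it suffices to produce such a $K$ together with a metrizable compact $M$ and a continuous $f\colon K\To M$ that does not lift through $p$; the surjection $p$ itself is then the witness, since $w(M)=\aleph_0$.

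Next I would fix $M=2^\omega$, the Cantor set (any metric compactum would do), and translate liftability into Boolean-algebraic terms. As $K$ is zero-dimensional, a lift $\tilde f\colon K\To G$ is the same, by Stone duality, as a Boolean homomorphism from the regular open algebra $RO(2^\omega)$—a complete Boolean algebra of size $\mathfrak c$—into the clopen algebra of $K$, restricting on the countable subalgebra $\mathrm{Clop}(2^\omega)\subset RO(2^\omega)$ to the map $U\mapsto f^{-1}(U)$. Thus $f$ fails to lift precisely when this induced embedding of $\mathrm{Clop}(2^\omega)$ cannot be extended over all of $RO(2^\omega)$ inside the clopen algebra of $K$. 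Under \textsf{CH} such extensions always exist—this is exactly the content of the Neville--Lloyd corollary, where $|RO(M)|\le 2^{\aleph_0}=\aleph_1$ keeps the Gleason cover within reach of Theorem~\ref{metricallyprojective} with $\aleph=\aleph_1$—whereas the whole point is that when $\mathfrak c>\aleph_1$ the algebra $RO(2^\omega)$ is too large to be controlled by the $F$-space property of $K$, which only fills countable-type gaps, in the spirit of the Hausdorff gap used in Proposition~\ref{not1aleph2}.

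It is here that I would invoke the space of Dow and Hart \cite{dowhart}, already exploited in \cite[Theorem 7]{accgm1}: consistently (in their model, where $\mathfrak c>\aleph_1$) there is a zero-dimensional compact $F$-space $K$ carrying a continuous map $f\colon K\To 2^\omega$ whose associated embedding of clopen algebras cannot be completed to a homomorphism of $RO(2^\omega)$ into the clopen algebra of $K$. Equivalently, $f$ does not lift through the Gleason cover, so by the reduction of the first paragraph $K$ is not projective with respect to $p\colon G\To 2^\omega$, a surjection with $w(2^\omega)=\aleph_0$. Since $K$ is a zero-dimensional compact $F$-space (so that $C(K)$ is $(1,\aleph_1)$-injective by Theorem~\ref{people}), this is precisely the consistency statement asserted, and it shows that the hypothesis $\kappa^+=2^\kappa$ in the Neville--Lloyd corollary cannot be dropped for $\kappa=\aleph_0$.

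The main obstacle is the middle step: certifying that the specific Dow--Hart space genuinely obstructs the completion of the clopen embedding, i.e.\ that the gap it encodes in $RO(2^\omega)$ admits no interpolant among the clopen sets of $K$. This is where the delicate combinatorics of \cite{dowhart} enters and where $\mathfrak c>\aleph_1$ is indispensable; everything else—the Gleason reduction and the Boolean reformulation—is soft and holds in \textsf{ZFC}. One must also verify that the constructed $K$ is simultaneously an $F$-space and zero-dimensional, but both features are built into the Dow--Hart construction as recalled in \cite[Theorem 7]{accgm1}.
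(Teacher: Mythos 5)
Your opening reduction (projectivity over all surjections onto $M$ is equivalent to liftability through the Gleason cover $p\colon G\To M$) is correct and is sound \textsf{ZFC} machinery, but it is not where the content of the theorem lies, and the step that does carry the content is exactly the one you leave open. What Dow and Hart actually construct (under $\mathfrak c=\aleph_2$ plus the existence of a chain of order type $\omega_2$ in $\wp(\N)/\fin$) is a zero-dimensional compact $F$-space $K$ that \emph{does not map onto} $\beta\N$; they do not produce a map $f\colon K\To 2^\omega$ together with a certificate that the induced embedding of $\mathrm{Clop}(2^\omega)$ fails to extend over $RO(2^\omega)$. You assert that such an $f$ exists "in their model" and then explicitly defer the verification to "the delicate combinatorics of \cite{dowhart}"; as written this is a genuine gap, since nothing in your argument converts "K does not surject onto $\beta\N$" into "some clopen-algebra embedding does not complete". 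The paper closes this distance with an elementary observation that your proposal misses: take $M=\alpha\N$ (the one-point compactification of $\N$, so $w(M)=\aleph_0$), $L=\beta\N$, and $\pi\colon\beta\N\To\alpha\N$ the map fixing $\N$ and collapsing $\N^\ast$ to $\infty$. If $f\colon K\To\alpha\N$ is any continuous surjection (which exists because $K$ is infinite and zero-dimensional) and $g\colon K\To\beta\N$ satisfies $\pi g=f$, then for each $n$ a point $x_n\in f^{-1}(n)$ forces $g(x_n)=n$; hence $g(K)$ is a closed set containing the dense set $\N$, so $g$ is onto $\beta\N$ --- impossible. No Boolean-algebraic analysis of $RO(2^\omega)$ is needed.

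Your route could in fact be repaired without new combinatorics, but only by adding the same idea: if a surjection $f\colon K\To 2^\omega$ lifted through the Gleason cover, then by irreducibility of $p$ the lift would map $K$ \emph{onto} $E(2^\omega)$, and $E(2^\omega)$, being infinite, zero-dimensional and projective, maps onto $\beta\N$ (lift the obvious surjection $E(2^\omega)\To\alpha\N$ through $\pi\colon\beta\N\To\alpha\N$ and argue as above that the image is all of $\beta\N$); so $K$ would again map onto $\beta\N$. Either way, the missing ingredient is the passage through $\beta\N\To\alpha\N$, which is the actual engine of the proof; the Gleason/Boolean reformulation in your second paragraph is dispensable and, left as it stands, the argument is incomplete.
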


\begin{proof}
Under the assumption that $\mathfrak c=\aleph_2$ and that
$\wp(\N)/\fin$ contains a chain of order type $\omega_2$, Dow and
Hart \cite[Theorem 5.10]{dowhart} construct a zero-dimensional
compact
 $F$-space $K$ which does not map onto $\beta\N$.
Let $M=\alpha\N$ be the one-point compactification of
the natural numbers, $L=\beta\N$ and $\pi:\beta\N\To M$ defined as
$\pi(n)=n$ for $n\in\N$, and $\pi(x)=\infty$ if
$x\in\beta\N\setminus\N$. Let $f:K\To M$ be a continuous
surjection. We claim that any continuous map $g:K\To L$ with $\pi
g=f$ must be onto, hence there is no such $g$. The reason is that
for every $n\in\N$, if $x_n$ is such that $f(x_n)=n$, $\pi
g(x_n)=n$, hence $g(x_n)=n$. Therefore $\N\subset g(K)$, and since $\N$
is dense in $L$, we conclude that $g$ is onto, as desired.
\end{proof}

In the next Theorem, by a compact convex set we mean a compact
convex set lying inside some locally convex space $E$.
Actually, every such set $L$ is affinely homeomorphic to a closed
convex subset of a cube $[0,1]^\Gamma$, where the size of $\Gamma$
can be as small as the weight of $L$. This is a consequence of the fact that
continuous linear functionals on $E$ separate points \cite[Corollary 3.33]{checos}:
One takes takes $\Gamma$ as the set of these functionals and then the correspondence
$x\mapsto (f(x))_{f\in\Gamma}$ shows that $L$ is affinely homeomorphic to a compact
convex subset of $\mathbb{R}^\Gamma$, indeed by compactness to a subset of a
product of intervals $\prod_\Gamma[a_\gamma,b_\gamma]$, which is in turn affinely
homeomorphic to $[0,1]^\Gamma$.

\begin{teor}\label{affinemetricallyprojective} Suppose $\aleph\geq \aleph_1$.
For a compact space $K$ the following are equivalent
\begin{enumerate}
\item $K$ is an $F_{\aleph}$-space \item For every continuous
affine surjection $\pi:L\To M$ between compact convex sets with
$w(L)<\aleph$, and every continuous function $f:K\To M$, there
exists a continuous function $g:K\To L$ such that $\pi g = f$.
\item As above with $w(M)<\aleph$ and $w(L)\leq\aleph$. \item As
above with $L\in H\!L_{\aleph}$.
\end{enumerate}
\end{teor}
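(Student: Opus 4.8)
The plan is to close the loop of implications $(1)\Rightarrow(3)\Rightarrow(2)\Rightarrow(1)$ together with $(1)\Rightarrow(4)\Rightarrow(2)$, exactly paralleling the architecture of Theorem~\ref{metricallyprojective} but replacing clopen separations (which needed zero-dimensionality) by continuous $[0,1]$-valued separators supplied by the $F_\aleph$ property through condition (b) of Theorem~\ref{people}; the passage from $\{0,1\}$ to $[0,1]$ is precisely what the convexity of the targets buys us. The implications $(3)\Rightarrow(2)$ and $(4)\Rightarrow(2)$ are immediate: if $w(L)<\aleph$ then $w(M)=w(\pi(L))\le w(L)<\aleph$ and $w(L)\le\aleph$, so (3) applies; and $H\!L_{\aleph}$ contains all compacta of weight $<\aleph$, so (4) applies. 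Note that $\aleph\ge\aleph_1$ guarantees $\aleph_0<\aleph$, which will be used freely to absorb metrizable factors such as $[0,1]$.

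For $(2)\Rightarrow(1)$ I would verify condition (d) of Theorem~\ref{people} by encoding a separation problem as a lifting problem. Given disjoint open sets $G=\bigcup_{\alpha<\kappa}C_\alpha$ and $H=\bigcup_{\alpha<\kappa}D_\alpha$ with $C_\alpha,D_\alpha$ closed and $\kappa<\aleph$, choose $f_\alpha,g_\beta\in C(K)$ with $0\le f_\alpha,g_\beta\le1$, $f_\alpha|_{K\setminus G}=0$, $f_\alpha|_{C_\alpha}=1$, $g_\beta|_{K\setminus H}=1$, $g_\beta|_{D_\beta}=0$, so that $f_\alpha\le g_\beta$ pointwise. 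I would then set $M=\{((u_\alpha),(v_\beta))\in[0,1]^{\kappa}\times[0,1]^{\kappa}:u_\alpha\le v_\beta\ \forall\alpha,\beta\}$ and $L=\{(s,(u_\alpha),(v_\beta))\in[0,1]\times M:u_\alpha\le s\le v_\beta\ \forall\alpha,\beta\}$, both closed convex of weight $\max(\kappa,\aleph_0)<\aleph$, with $\pi$ the coordinate projection forgetting $s$ (affine, surjective since $u_\alpha\le v_\beta$ forces a valid $s$). The map $f(x)=((f_\alpha(x)),(g_\beta(x)))$ lands in $M$; a lift $g:K\to L$ furnished by (2) has first coordinate a continuous $h$ with $f_\alpha\le h\le g_\beta$, whence $h|_G=1$, $h|_H=0$ and $\overline G\cap\overline H=\varnothing$.

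For $(1)\Rightarrow(3)$ and $(1)\Rightarrow(4)$ I would imitate the graph-and-transfinite-induction scheme of Theorem~\ref{metricallyprojective}. Using the remark preceding the statement I affinely embed $L\subset[0,1]^\Gamma$ (taking $\Gamma=\aleph$ in case (3), $\Gamma=w(L)$ in case (4)), form the graph $G=\{(l,\pi(l))\}\subset[0,1]^\Gamma\times M$ affinely homeomorphic to $L$, and reduce to lifting $f$ through $\pi_2:G\to M$, since $g=\pi_1h$ then works. Writing $p_\alpha$ for the projection that forgets the coordinates $\ge\alpha$ and $G_\alpha=p_\alpha(G)\subset[0,1]^{\alpha}\times M$, I construct coherent continuous $h_\alpha:K\to G_\alpha$ with $\pi_2^\alpha h_\alpha=f$ by transfinite induction; limit steps are forced by coherence into $\varprojlim_{\alpha<\beta}G_\alpha$, and the only substantive step is the successor step, which lifts $h_\alpha$ through the affine surjection $G_{\alpha+1}\to G_\alpha$ with $G_{\alpha+1}\subset G_\alpha\times[0,1]$. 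Here the weight bookkeeping does the work: in case (3) each $G_\alpha$ ($\alpha<\aleph$) has $w(G_\alpha)\le|\alpha|+w(M)<\aleph$, while in case (4) each $G_\alpha$ lies in $H\!L_{\aleph}$ as a continuous image of $L$, using that this class is stable under continuous images.

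The hard part will be this single-coordinate insertion, which is where the $F_\aleph$ hypothesis is genuinely consumed. Given an affine surjection $q:E\to F$ with $E\subset F\times[0,1]$ convex, a continuous $\phi:K\to F$, and $F$ of weight $<\aleph$ (case 3) or $F\in H\!L_{\aleph}$ (case 4), the fibres are intervals $[a(y),b(y)]$ with $a$ convex lower semicontinuous and $b$ concave upper semicontinuous and $a\le b$; a lift amounts to a continuous $h:K\to[0,1]$ with $a\circ\phi\le h\le b\circ\phi$. The crux is to realize $a\circ\phi$ as the supremum and $b\circ\phi$ as the infimum of fewer than $\aleph$ continuous functions on $K$: one decomposes each open set $\{a>r\}$ ($r\in\mathbb Q$) into fewer than $\aleph$ closed subsets of $F$ — by a base of size $<\aleph$ in case (3), by the $H\!L_{\aleph}$ characterization in case (4) — pulls these back by $\phi$ to closed subsets of $K$, and manufactures the corresponding Urysohn functions. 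Condition (b) of Theorem~\ref{people} then inserts the desired continuous $h$. I expect the delicate points to be the verification of the semicontinuity/convexity of $a,b$, the check that the pulled-back families have supremum $a\circ\phi$ and infimum $b\circ\phi$, and keeping the total number of functions below $\aleph$ (a countable union of families of size $<\aleph$), which is where one must be attentive to the structure of $\aleph$.
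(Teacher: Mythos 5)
Your architecture coincides with the paper's: the trivial implications, the encoding of the separation problem as an affine lifting problem for $(2)\Rightarrow(1)$ (the paper verifies condition (b) of Theorem~\ref{people} directly while you pass through condition (d) via Urysohn functions, but the convex sets $L$ and $M$ you build are the same ones), and the graph-plus-transfinite-induction reduction of $(1)\Rightarrow(3),(4)$ to a single-coordinate insertion $L\subset M\times[0,1]$ whose fibers are intervals $[a(y),b(y)]$. The one place where you genuinely diverge from the paper is inside that insertion step, and that is where there is a gap.

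You propose to write $a\circ\phi$ as the supremum and $b\circ\phi$ as the infimum of families of Urysohn functions indexed by pairs $(r,\xi)$, with $r$ rational and $\xi$ ranging over a decomposition of $\{a>r\}$ (respectively $\{b<r\}$) into fewer than $\aleph$ closed sets, and then to apply condition (b) of Theorem~\ref{people} \emph{once} to the whole family. The total number of functions is a countable sum $\sum_{r}\kappa_r$ with each $\kappa_r<\aleph$; this is indeed $<\aleph$ when $\aleph$ has uncountable cofinality, and also in case (3), where $\kappa_r\leq w(M)<\aleph$ uniformly. But in case (4) with $\aleph$ singular of countable cofinality (e.g.\ $\aleph=\aleph_\omega$) the hypothesis $L\in H\!L_{\aleph}$ only provides, for each open set separately, \emph{some} decomposition into fewer than $\aleph$ closed sets, with no uniform bound on the $\kappa_r$, so the sum can a priori equal $\aleph$ and condition (b) no longer applies. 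You flag this yourself but do not resolve it. The paper's Step~1 avoids the issue by never pooling across levels: for each rational $q_n$ it invokes the $F_{\aleph}$ property exactly once, applied to the two disjoint open sets $f^{-1}(U_n^-)$ and $f^{-1}(U_n^+)$ (each a union of fewer than $\aleph$ closed sets), obtaining a single pair of separating functions $\delta_n^{\pm}$; after a monotonicity adjustment these form a countable family, which is then interpolated using only the $F_{\aleph_1}$ property --- this is precisely where the hypothesis $\aleph\geq\aleph_1$ is consumed. Recasting your insertion step in this two-stage form (one $F_{\aleph}$-separation per rational, then one countable interpolation) repairs the argument for all $\aleph$; the remainder of your proposal, including the semicontinuity of $a$ and $b$, the membership check $(f(x),\gamma(x))\in L$, and the limit stages of the induction, matches the paper and is correct.
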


\begin{proof}
It is clear that $(3) \Rightarrow (2)$ and that $(4) \Rightarrow (2)$.
We shall prove that (2) implies (1), and that (1) implies (3) and (4).
Suppose first that (2) holds, and we shall show that the second condition
of Theorem \ref{people} holds for any cardinal $\Gamma<\aleph$.
Let $f_\alpha, g_\alpha:K\To [0,1]$, with $\alpha<\Gamma$, be two families
of continuous functions such that $f_\alpha\leq g_\beta$ for every
$\alpha,\beta<\Gamma$.
Consider
\begin{eqnarray*}
M &=&
\left\{\left((t_\alpha)_{\alpha<\Gamma},(s_\alpha)_{\alpha<\Gamma}\right)
\in [0,1]^\Gamma\times [0,1]^\Gamma :
\sup_{\alpha<\Gamma} t_\alpha \leq \inf_{\alpha<\Gamma} s_\alpha\right\},\\
L &=& \left\{\left((t_\alpha)_{\alpha<\Gamma},r,(s_\alpha)_{\alpha<\Gamma}\right)\in [
0,1]^\Gamma\times [0,1]\times [0,1]^\Gamma :
\sup_{\alpha<\Gamma} t_\alpha \leq r\leq \inf_{\alpha<\Gamma} s_\alpha\right\}.
\end{eqnarray*}
Let $\pi:L\To M$ be the natural surjection which forgets the
intermediate coordinate $r$, and let also $f:K\To M$ be given by
$$
f(x) = (f_\alpha(x)_{\alpha<\Gamma},g_\alpha(x)_{\alpha<\Gamma}).
$$
We are in a position to apply the statement of part (2), so that
there is a function $g:K\To L$ such that $\pi(g(x)) = f(x)$. If we
look at the composition of $g$ with the projection on the central
coordinate $r$ of $L$, we obtain a continuous function $h:K\To
[0,1]$ such that $f_\alpha\leq h \leq g_\alpha$ for every
$\alpha<\Gamma$. This proves that $K$ is an $F_{\aleph}$-space.\\

Now we proceed to the proof that (1) implies (3) and (4), so we suppose that $K$
is an $F_{\aleph}$-space, $\pi:L\To M$ is a continuous affine surjection
and $f:K\To M$ is a continuous surjection. We want to show that, under the hypotheses
of either (3) or (4), we get a continuous function $g:K\To L$ such that $\pi g = f$.
We consider $M$ to be a closed convex subset of a cube, $M\subset [0,1]^\Gamma$
and we call $\pi_\alpha:M\To [0,1]$ to the projection on the
$\alpha$-th coodinate. The first step is to find the desired
function $g$ under the following assumption (which can be
considered the analogue of considering a Banach superspace of
codimension 1):\\

{\sc Step 1.} We assume $M\in H\!L _{\aleph}$ and there exists a
continuous affine function $\phi:L\To [0,1]$ such that the map
$(\pi,\phi) : L\To M\times [0,1]$ given by $(\pi,\phi)(x) =
(\pi(x),\phi(x))$ is one-to-one.\\

In this case, we shall view $L$ as a closed convex subset of
$M\times [0,1]$, so that $\pi$ and $\phi$ are just the projections
on the first and second coordinate. To find the desired function
$g:K\To L$ is equivalent to find a continuous function
$\gamma:K\To [0,1]$ such that $(f(x),\gamma(x))\in L$ for every
$x\in K$. Let $\{q_n : n<\omega\}$ be a countable dense subset of
$[0,1]$. We shall define by induction continuous functions
$\gamma_n^-,\gamma_n^+:K\To [0,1]$ such that $\gamma_n^-
\leq\gamma_m^+$ for every $n,m$, and then $\gamma$ will be chosen
such that $\gamma_n^- \leq\gamma\leq\gamma_m^+$ for every $n,m$.
For each $n$, define
\begin{eqnarray*}
U_n^- &=& \{y\in M :  (y,t)\not\in L \text{ for every } t\in [q_n,1]\}\setminus \pi(L\cap (M\times [q_n,1])), \\
U_n^+ &=& \{y\in M : (y,t)\not\in L  \text{ for every } t\in [0,q_n]\}\setminus \pi(L\cap (M\times [0,q_n])),
\end{eqnarray*}
which are two disjoint open subsets of $M$.
Since $M\in H\!L_{\aleph}$, $f^{-1}(U^-_n)$ and $f^{-1}(U^+_n)$
are disjoint open subsets of $K$ which are moreover unions of less than $\aleph$ many
closed sets.
Since $K$ is an $F_{\aleph}$-space, there exist continuous functions
$\delta_n^-$ and $\delta_n^+$ over $K$ such that
\begin{eqnarray*}
0\leq \delta_n^-\leq q_n,  \ \  \delta_n^-|_{f^{-1}(U^-_n)}\equiv 0,
\ \ \delta_n^-|_{f^{-1}(U^+_n)}\equiv q_n,\\
q_n\leq \delta_n^+\leq 1, \ \  \delta_n^+|_{f^{-1}(U^-_n)}\equiv q_n,
\ \  \delta_n^+|_{f^{-1}(U^+_n)}\equiv 1.
\end{eqnarray*}

A priori, it may be false that $\delta_n^-\leq \delta_m^+$ for every
$n,m$, so in order to make sure of this we define inductively:
\begin{eqnarray*}
\gamma_n^- = \min\{\delta_n^-, \gamma_m^+ : m<n\},
\ \ \gamma_n^+ = \max\{\delta_n^+, \gamma_m^- : m<n\}.
\end{eqnarray*}
It is easy to see (using the fact that if $q_i<q_j$, then
$f^{-1}(U^-_i)\subset f^{-1}(U^-_j)$ and
$f^{-1}(U^+_i)\supset f^{-1}(U^+_j)$) that these new functions
still keep the key properties that
\begin{eqnarray*}
0\leq \gamma_n^-\leq q_n,  \ \  \gamma_n^-|_{f^{-1}(U^-_n)}\equiv 0,
\ \ \gamma_n^-|_{f^{-1}(U^+_n)}\equiv q_n,\\
q_n\leq \gamma_n^+\leq 1, \ \  \gamma_n^+|_{f^{-1}(U^-_n)}\equiv q_n,
\ \  \gamma_n^+|_{f^{-1}(U^+_n)}\equiv 1.
\end{eqnarray*}

Since $K$ is in particular an $F$-space,
there exists a continuous function $\gamma:K\To [0,1]$  such that
$\gamma^-_n\leq \gamma \leq \gamma^+_n$ for all $n$.
We have to show that $(f(x),\gamma(x))\in L$ for every $x\in K$.
Given $x\in K$, let
$$
I = \{t\in [0,1] : (f(x),t)\in L\} = \phi(\pi^{-1}[f(x)]).
$$

Since $\phi$ and $\pi$ are affine, $I=[a,b]$ is a closed interval.
In order to check that $\gamma(x)\in I$, we show that $q_n\leq
\gamma(x)\leq q_m$ whenever $q_n<a$ and $q_m>b$. For example, if
$q_n<a$, then this means that $f(x)\in U_n^+$, $x\in
f^{-1}(U_n^+)$, so $q_n = \gamma_n^-(x)\leq \gamma(x)$.
Analogously, if $q_m>b$, then $x\in f^{-1}(U_m^-)$, and
$\gamma(x)\leq \gamma^+_m(x)=q_m$. This finishes the proof under
the assumption made in Step 1.\\

{\sc General case.} We view now $L$ as compact convex set of the
Hilbert cube $[0,1]^\Gamma$ (with $\Gamma = \aleph$ when we are
under the assumptions of case (3)) and we call $\chi_\alpha:L\To
[0,1]$ the coordinate functions, $\alpha<\Gamma$. For every
$\alpha$, we consider the map $h_\alpha:L\To M\times [0,1]^\alpha$
given by $h_\alpha(z) = (\pi(z),\chi_\beta(z)_{\beta<\alpha})$,
and we call $L_\alpha = h_\alpha(L)\subset M\times [0,1]^\alpha$
the image of this continuous function. For $\alpha<\beta$, we also
call $p_\alpha^\beta:L_\beta\To L_\alpha$
 the continuous surjection which forgets about coordinates $t_i$
with $i\geq \alpha$.
We construct by transfinite induction a sequence of coherent liftings
$g_\alpha:K\To L_\alpha$, $\alpha<\Gamma$, that is, functions
satisfying $g_0=f$ and $p_\alpha^\beta g_\beta = g_\alpha$
whenever $\alpha<\beta$.
Notice that this is actually equivalent to finding continuous functions
$\gamma_\alpha:K\To [0,1]$ such that
$g_\alpha(x)=(f(x),\gamma_\beta(x)_{\beta<\alpha})\in L_\alpha$ for
every $x\in K$ and $\alpha\leq\Gamma$.
In the inductive process $g_{\alpha+1}$ is obtained from $g_\alpha$
by applying Step 1, while in the limit ordinals one has to take
$g_\beta(x) = (f(x),\gamma_\alpha(x)_{\alpha<\beta})$.
Notice that Step 1 can be applied because $L_\alpha\in H\!L_\aleph$:
if we are in case (3), we took $\Gamma=\aleph$, so $w(L_\alpha)<\aleph$, while
in case (4) $L_\alpha$ is a continuous image of $L$ and $L\in H\!L_\aleph$.
Let $g_\Gamma:K\To M\times L$ be the final output of this inductive
construction. We have that $p_0^\Gamma g_\Gamma=f$.
Let $g:K\To L$ be obtained by projecting $g_\Gamma$ on the second
coordinate, so that we can write $g^\Gamma(x) = (f(x),g(x))$.
The fact that $g^\Gamma(x)\in L_\Gamma$ implies that $\pi(g(x)) = f(x)$, so $g$
is the map that we were looking for.
\end{proof}

The fact that $C(K)$ is ($1, \aleph$)-injective when  $K$ is an
$F_{\aleph}$-space is a consequence of Theorem~\ref{affinemetricallyprojective}.
Suppose we have $Y\subset X$ Banach spaces with $\dens X<\aleph$ and
$t:Y\To C(K)$ an operator of norm 1. We can apply part (2) of
Theorem~\ref{affinemetricallyprojective} to $\pi:B_{X^{\ast}}\To B_{Y^{\ast}}$ and
the mapping $f:K\To B_{Y^{\ast}}$ given by $f(x)=t^\ast(\delta_x)$.
We obtain a weak*-continuous function $g:K\To B_{X^\ast}$ such that $\pi g=f$.
Then, the formula ${T}(x)(k) = \|x\| g(k)(x/\|x\|)$, $x\in X$, $k\in K$,
defines an extension of $t$ of norm 1.

\section{Open Problems }

\noindent (1) Find  homological characterizations of $\aleph$-injectivity
and universal $\aleph$-injectivity in $\sf{ZFC}$. Proposition
\ref{doselevado} characterizes $(2^\aleph)^+$-injectivity; in particular,
it characterizes $\aleph_2$-injectivity under {\sf
CH} (\emph{a Banach space $E$ is $\aleph_2$-injective if and only
if if it is complemented in every superspace $W$ such that $W/E$
is a quotient of $\ell_\infty$}) and $\aleph^+$-injectivity under
{\rm {\sf{GCH}} (\emph{a Banach space $E$ is $\aleph^+$-injective
if and only if it is complemented in every superspace $W$ such
that $W/E$ is a quotient of $\ell_\infty(\aleph)$}}).
\smallskip

\noindent (2) Find a characterization of universal $(1,\aleph)$-injectivity
by means of intersection of families of balls.
\smallskip

\noindent (3) Is universal $\aleph$-injectivity a $3$-space property?
\smallskip

\noindent (4) Is it consistent that $C(\N^*)$ is $\aleph_2$-injective?
Recall that we have already shown that $C(\N^*)$ is not
$(1,\aleph_2)$-injective nor $\mathfrak c^+$-injective.
\smallskip

\noindent (5) Are Lindenstrauss ultraproducts via countably incomplete $\aleph$-good
ultrafilters universally $\aleph$-injective spaces in {\sf{ZFC}}?
(They are universally $(1,\aleph)$-injective under {\sf GCH}.)
\smallskip

\noindent (6) Prove or disprove that every ultraproduct built over a countably incomplete,
$\aleph$-good ultrafilter is $\aleph$-injective as long as it is
a $\mathscr L_\infty$-space.


\begin{thebibliography}{AC}

\bibitem{a-k}
F. Albiac and N.J. Kalton, \emph{Topics in Banach Space Theory},
Graduate Texts in Mathematics, vol. 233. Springer, New York (2006).


\bibitem{a-p} N. Aronszajn and P. Panitchpakdi
\emph{Extension of uniformly continuous transformations and
hyperconvex metric spaces,} Pacific J. Math.  6 (1956), 405--439

\bibitem{accgm2}
A. Avil\'es, F. Cabello S\'anchez, J. M. F. Castillo, M. Gonz\'alez and Y. Moreno,
\emph{Banach spaces of universal disposition},
J. Funct. Anal. 261 (2011), 2347--2361.

\bibitem{accgm1}
A. Avil\'es, F. Cabello S\'anchez, J. M. F. Castillo, M. Gonz\'alez and Y. Moreno,
\emph{ On separably injective Banach spaces},
Adv. in Math. 234 (2013), 192--216.

\bibitem{wrong}
A. Avil\'es, F. Cabello S\'anchez, J. M. F. Castillo, M. Gonz\'alez and Y. Moreno,
\emph{On ultraproducts of Banach spaces of type $\mathscr L_\infty$},
Fundamenta Math. 222 (2013), 195--212.

\bibitem{bankston} P. Bankston.
\emph{Reduced coproducts of compact Hausdorff spaces,}
J. Symbolic Logic 52 (1987), 404--424.

\bibitem{bankston-survey} P. Bankston.
\emph{A survey of ultraproduct constructions in general topology,}
Topology Atlas 8 (1993), 1--32.

\bibitem{cabecastlong} F. Cabello S\'{a}nchez and J.M.F. Castillo,
\emph{The long homology sequence for quasi-Banach spaces, with
applications}, Positivity 8 (2004), 379--394.

\bibitem{castgonz}  J.M.F. Castillo and M. Gonz\'alez.
\emph{Three-space problems in Banach space theory,}
Lecture Notes in Math. 1667. Springer-Verlag, 1997.

\bibitem{castplic} J.M.F. Castillo and A. Plichko,
\emph{Banach spaces in various positions,}
J. Funct. Anal. 259 (2010), 2098-2138.

\bibitem{chang-keisler}
C.C. Chang and H.J. Keisler. \emph{Model theory (Third edition).}
Studies in Logic and the Foundations of Mathematics 73. North-Holland, 1990.

\bibitem{comfort-negrepontis}
W. W. Comfort and S. Negrepontis. \emph{The theory of
ultrafilters,} Springer-Verlag, 1974.

\bibitem{dowhart} A. Dow and K. P. Hart.
\emph{Applications of another characterization of $\beta\mathbb{N}\setminus\mathbb{N}$,}
Topology Appl. 122 (2002), 105--133.

\bibitem{checos}
M. Fabian, P. Habala, P. H\'{a}jek, V. Montesinos, V. Zizler,
\emph{Banach space theory. The basis for linear and nonlinear analysis.}
CMS Books in Mathematics, Springer, New York, 2011.


\bibitem{fonf}
V. P. Fonf, J. Lindenstrauss and R. R. Phelps. \emph{Infinite
dimensional convexity.} Handbook of the geometry of Banach spaces,
Vol. I; pp. 599--670. North-Holland, Amsterdam, 2001.

\bibitem{gleason} A. M. Gleason.
\emph{Projective topological spaces,} Illinois J. Math. 2 (1958), 482--489.

\bibitem{hasa} V. S. Hasanov,
\emph{Some universally complemented subspaces of $m(\Gamma)$},
Mat. Zametki 27 (1980), 105--08.

\bibitem{hausdorff}
F. Hausdorff, \emph{Summen von $\aleph_1$ Mengen}, Fund. Math. 26
(1936), 241--255.

\bibitem{heinrich}
S. Heinrich, \emph{Ultraproducts in Banach space theory},
J. Reine Angew. Math.  313  (1980), 72--104.

\bibitem{heinrichL1}
S. Heinrich. \emph{Ultraproducts of $L\sb{1}$-predual spaces,}
Fund. Math. 113 (1981), 221--234.

\bibitem{henriksen} M. Henriksen.
\emph{Some remarks on a paper of Aronszajn and Panitchpakdi,}
Pacific J. Math.  7 (1957), 1619--1621.

\bibitem{keisler}
H. J. Keisler, \emph{The Ultraproduct Construction}.
In ``Ultrafilters Across Mathematics'', ed. by V. Bergelson et. al.,
Contemporary Mathematics 530, pp. 163--179. Amer. Math. Soc., 2010.

\bibitem{kell} J. L. Kelley. \emph{Banach spaces with the extension property},
Trans. Amer. Math. Soc. 72 (1952), 323--326.

\bibitem{lind} J. Lindenstrauss.
\emph{On the extension of compact operators,} Mem. Amer. Math. Soc. 48, 1964.

\bibitem{lindpams} J. Lindenstrauss.
\emph{On the extension of operators with range in a $C(K)$ space,}
Proc. Amer. Math. Soc. 15 (1964), 218--225.

\bibitem{lindrose}
J.~Lindenstrauss and H. P.~Rosenthal, \emph{Automorphisms in $c_0,
l_1$ and $m$}, Israel J. Math. 7 (1969), 227--239.

\bibitem{lindtzaf}  J. Lindenstrauss and L. Tzafriri,
\emph{Classical Banach spaces I,} Springer-Verlag, 1977.

\bibitem{lusky-survey}
W. Lusky, \emph{Separable Lindenstrauss spaces.} Functional
Analysis: Surveys and Recent Results, pp. 15--28. North-Holland, 1977.

\bibitem{lusky-jfa} W. Lusky.
\emph{A note on Banach spaces containing $c\sb 0$ or $C\sb\infty$,}
J. Funct. Anal. 62 (1985), 1--7.

\bibitem{nachbin} L. Nachbin.
\emph{A theorem of the Hahn-Banach type for linear transformations,}
Trans. Amer. Math. Soc.  68 (1950), 28--46

\bibitem{neville} C. W. Neville.
\emph{Banach spaces with a restricted Hahn-Banach extension property,}
Pacific J. Math. 63 (1976), 201--212.

\bibitem{nevillealeph} C. W. Neville and S. P. Lloyd.
\emph{$\aleph$-projective spaces,} Illinois J. Math. 25 (1981), 159--168.

\bibitem{Przymusinki}
T. C. Przymusi\'{n}ski. \emph{Perfectly normal compact spaces are
continuous images of $\beta\mathbb{N}\setminus\mathbb{N}$,}
Proc. Am. Math. Soc. 86 (1982), 541--544.

\bibitem{disjoint} H. P. Rosenthal.
\emph{On relatively disjoint families of measures, with some
applications to Banach space theory,} Studia Math. 37 (1970), 13--36.

\bibitem{seever} G. L. Seever.
\emph{Measures on $F$-spaces}, Trans. Amer. Math. Soc. 133 (1968), 267--280.

\bibitem{sims}
B. Sims, \emph{``Ultra''-techniques in Banach space theory}.
Queen's Papers in Pure and Applied Mathematics, 60.
Queen's University, Kingston, ON, 1982.

\bibitem{swardson} M. A. Swardson, \emph{A generalization of
$F$-spaces and some topological characterizations of GCH},
Trans. Amer. Math. Soc.  279  (1983), 661--675.

\bibitem{walker} R. C. Walker, \emph{The Stone-\v Cech compactification,}
Ergebnisse der Mathematik und ihrer Grenzgebiete, Band 83. Springer-Verlag, 1974.

\bibitem{zbook} M. Zippin.
\emph{Extension of bounded linear operators.}
Handbook of the Geometry of Banach Spaces, Vol. 2.
W.B. Johnson and J. Lindenstrauss, eds. Elsevier, 2003.

\end{thebibliography}
\end{document}